\providecommand{\tabularnewline}{\\}
\providecommand{\U}[1]{\protect\rule{.1in}{.1in}}
\newtheorem{theorem}{Theorem}
\newtheorem{corollary}[theorem]{Corollary}
\newtheorem{lemma}[theorem]{Lemma}
\newtheorem{proposition}[theorem]{Proposition}
{\theorembodyfont{\rmfamily}

\newtheorem{example}[theorem]{Example}
\newtheorem{remark}{Remark}
}
\newenvironment{proof}[1][Proof]{\noindent\textbf{#1.} }{\ \rule{0.5em}{0.5em}}
\begin{document}

\title{Linear Monotone Subspaces \\
of Locally Convex Spaces}

\author{M.D. Voisei and C. Z\u{a}linescu}

\date{{}}

\maketitle
\begin{abstract}
The main focus of this paper is to study multi-valued linear monotone
operators in the contexts of locally convex spaces via the use of
their Fitzpatrick and Penot functions. Notions such as maximal monotonicity,
uniqueness, negative-infimum, and (dual-) representability are studied
and criteria are provided.
\end{abstract}

\section{Preliminaries}

Motivated by the facts that, besides the subdifferentials, single-valued
linear maximal monotone operators enjoy a set of stronger monotonicity
properties and belong to most of the special classes of maximal monotone
operators introduced in the non-reflexive Banach space settings the
linear monotone subspaces of a non-reflexive Banach space made the
object of extensive studies in \cite{Bauschke-Borwein 99,Bauschke-Simons 99,Phelps-Simons 98,Bauschke Wang Yao 2008}.

It is also hoped that a linear maximal monotone would be a part of
a counter-example to the celebrated Rockafellar Conjecture (see e.g.
\cite{Borwein06-via}), that is why, a thorough study of linear monotone
operators could help accomplish that goal.

The vast majority of the results concerning monotone operators are
stated in the context of Banach spaces with a few exceptions (see
e.g. \cite{Browder68-fixed point,Fitzpatrick:88}). In the context
of non-reflexive Banach spaces it is natural to use topologies compatible
with the simple duality $(X\times X^{*},X^{*}\times X)$ instead of
the strong topologies. This leads to the context of locally convex
spaces under which, fortunately, many of the results stated in Banach
spaces for monotone operators still hold.

Our goal in this paper is twofold: - to extend and add to the known
results concerning maximal monotone multi-valued operators in the
context of locally convex spaces and - to provide general criteria
for various classes of linear monotone operators while performing
a comparison of the main types of linear maximal monotone operators
through examples and counter-examples.

It is interesting to mention that in a locally convex space context,
currently, there are two characterization of maximal monotonicity
of a general operator:

\begin{itemize}
\item (Fitzpatrick \cite[Theorem 3.8]{Fitzpatrick:88}) Let $X$ be a locally
convex space and $T:X\rightrightarrows X^{*}$ be monotone. Then $T$
is maximal monotone iff $\varphi_{T}(x,x^{*})=\sup\{x^{*}(y)+y^{*}(x-y)\mid(y,y^{*})\in\mathrm{gph}\, T\}>x^{*}(x)$,
for every $(x,x^{*})\in X\times X^{*}\setminus\mathrm{gph}\, T$.
\item (Voisei \& Z\u{a}linescu \cite[Theorem 1]{VZ mmcs 2008}) Let $X$
be a locally convex space and $T:X\rightrightarrows X^{*}$. Then
$T$ is maximal monotone iff $T$ is representable and $T$ is NI
in $X\times X^{*}$.
\end{itemize}
Here, we provide another characterization for the maximality of a
monotone operator in terms of uniqueness, namely

\begin{itemize}
\item Let $X$ be a locally convex space and $T:X\rightrightarrows X^{*}$
be monotone. Then $T$ is maximal monotone iff $T$ is unique and
dual-representable (see Theorem \ref{dr-unic} below).
\end{itemize}
The plan of the paper is as follows. In the second section the main
notions and notation together with the most representative results
concerning these notions are presented. Section 3 studies general
monotone operator properties such as uniqueness, NI type, graph-convexity,
(dual-)representability, and maximal monotonicity. Section 4 deals
with skew linear monotone operators. In Section 5 the main types of
linear monotone subsets are studied and criteria are provided.

\section{Main notions and notations}

For a locally convex space $(E,\mu)$ and $A\subset E$, we denote
by {}``$\operatorname*{conv}A$'' the \emph{convex hull} of $A$,
{}{}{}``$\operatorname*{aff}A$'' the \emph{affine hull} of $A$,
{}{}{}``$\operatorname*{lin}A$'' the \emph{linear hull} of $A$,
{}{}{}``$\operatorname*{cl}_{\mu}A$'' the $\mu-$\emph{closure}
of $A$, {}``$A^{i}$'' the \emph{algebraic interior} of $A$, {}``$^{i}A$''
the \emph{relative algebraic interior} of $A$ with respect to $\operatorname*{aff}A$,
while $^{ic}A:={}^{i}A$ if $\operatorname*{aff}A$ is $\mu-$closed
and $^{ic}A:=\emptyset$ otherwise, is the relative algebraic interior
of $A$ with respect to $\operatorname*{cl}_{\mu}(\operatorname*{aff}A)$.
In the sequel when the topology is implicitly understood we avoid
the use of the $\mu-$notation. A subset $A$ is a \emph{cone} if
$\mathbb{R}_{+}A=A$ while $A$ is a \emph{double-cone} if $\mathbb{R}A=A$.

For $f,g:E\rightarrow\overline{\mathbb{R}}:=\mathbb{R}\cup\{-\infty,+\infty\}$
we set $[f\leq g]:=\{x\in E\mid f(x)\leq g(x)\}$; the sets $[f=g]$,
$[f<g]$ and $[f>g]$ are defined similarly.

Throughout this paper, if not otherwise explicitly mentioned, $(X,\tau)$
is a non trivial (that is, $X\neq\{0\}$) separated locally convex
space, $X^{\ast}$ is its topological dual endowed with the weak-star
topology $w^{\ast}$, the topological dual of $(X^{\ast},w^{\ast})$
is identified with $X$, and the weak topology on $X$ is denoted
by $w$. The \emph{duality product} of $X\times X^{\ast}$ is denoted
by $\left\langle x,x^{\ast}\right\rangle :=x^{\ast}(x)=:c(x,x^{\ast})$
for $x\in X$, $x^{\ast}\in X^{\ast}$.

To an operator (or multi\-function) $T:X\rightrightarrows X^{\ast}$
we associate its \emph{graph}: $\operatorname*{gph}T=\{(x,x^{\ast})\in X\times X^{\ast}\mid x^{\ast}\in T(x)\}$,
\emph{inverse:} $T^{-1}:X^{*}\rightrightarrows X$, $\operatorname*{gph}T^{-1}=\{(x^{*},x)\mid(x,x^{\ast})\in\operatorname*{gph}T\}$,
\emph{domain}: $\operatorname*{dom}T:=\{x\in X\mid T(x)\neq\emptyset\}=\Pr\nolimits _{X}(T)$,
and \emph{range}: $\operatorname*{Im}T:=\{x^{*}\in X^{*}\mid x^{*}\in T(x)\ \mathrm{for\ some}\ x\in X\}=\Pr\nolimits _{X^{*}}(T)$.
Here $\Pr_{X}$ and $\Pr_{X^{\ast}}$ are the projections of $X\times X^{*}$
onto $X$ and $X^{\ast}$, respectively. When no confusion can occur,
$T$ will be identified with $\operatorname*{gph}T$.

\strut

On $X$, we consider the following classes of functions and operators:

\begin{description}
\item [{$\Lambda(X)$}] the class formed by proper convex functions $f:X\rightarrow\overline{\mathbb{R}}$.
Recall that $f$ is \emph{proper} if $\mathrm{dom}\: f:=\{x\in X\mid f(x)<\infty\}$
is nonempty and $f$ does not take the value $-\infty$,
\item [{$\Gamma_{\tau}(X)$}] the class of functions $f\in\Lambda(X)$
that are $\tau$--lower semi\emph{\-}continuous (\emph{$\tau$--}lsc
for short); when the topology is implicitly understood we use the
notation $\Gamma(X)$,
\item [{$\mathcal{M}(X)$}] the class of monotone operators $T:X\rightrightarrows X^{*}$.
Recall that $T:X\rightrightarrows X^{*}$ is \emph{monotone} if $\left\langle x_{1}-x_{2},x_{1}^{\ast}-x_{2}^{\ast}\right\rangle \ge0$,
for all $x_{1}^{\ast}\in Tx_{1}$, $x_{2}^{\ast}\in Tx_{2}$.
\item [{$\mathfrak{M}(X)$}] the class of maximal monotone operators $T:X\rightrightarrows X^{*}$.
The maximality is understood in the sense of graph inclusion as subsets
of $X\times X^{*}$.
\end{description}
Notions associated to a proper function $f:X\rightarrow\overline{\mathbb{R}}$:

\begin{description}
\item [{$\operatorname*{epi}f:=\{(x,t)\in X\times\mathbb{R}\mid f(x)\leq t\}$}] is
the \emph{epigraph} of $f$,
\item [{$\operatorname*{conv}f:X\rightarrow\overline{\mathbb{R}}$,}] the
\emph{convex hull} of $f$, is the greatest convex function majorized
by $f$, $(\operatorname*{conv}f)(x):=\inf\{t\in\mathbb{R}\mid(x,t)\in\operatorname*{conv}(\operatorname*{epi}f)\}$
for $x\in X$,
\item [{$\operatorname*{cl}_{\tau}\operatorname*{conv}f:X\rightarrow\overline{\mathbb{R}}$,}] the
\emph{$\tau-$lsc convex hull} of $f$, is the greatest \emph{$\tau$--}lsc
convex function majorized by $f$, $(\operatorname*{cl}_{\tau}\operatorname*{conv}f)(x):=\inf\{t\in\mathbb{R}\mid(x,t)\in\operatorname*{cl}_{\tau}(\operatorname*{conv}\operatorname*{epi}f)\}$
for $x\in X$,
\item [{$f^{\ast}:X^{\ast}\rightarrow\overline{\mathbb{R}}$}] is the \emph{convex
conjugate} of $f:X\rightarrow\overline{\mathbb{R}}$ with respect
to the dual system $(X,X^{\ast})$, $f^{\ast}(x^{\ast}):=\sup\{\left\langle x,x^{\ast}\right\rangle -f(x)\mid x\in X\}$
for $x^{\ast}\in X^{\ast}$.
\item [{$\partial f(x)$}] is the \emph{subdifferential} of $f$ at $x\in X$;
$\partial f(x):=\{x^{\ast}\in X^{\ast}\mid\left\langle x^{\prime}-x,x^{\ast}\right\rangle +f(x)\leq f(x^{\prime}),\ \forall x^{\prime}\in X\}$
for $x\in X$ (it is clear that $\partial f(x):=\emptyset$ for $x\not\in\operatorname*{dom}f$).
Recall that $N_{C}=\partial\iota_{C}$ is the \emph{normal cone} of
$C$, where $\iota_{C}$ is the \emph{indicator} \emph{function} of
$C\subset X$ defined by $\iota_{C}(x):=0$ for $x\in C$ and $\iota_{C}(x):=\infty$
for $x\in X\setminus C$.
\end{description}
Let $Z:=X\times X^{\ast}$. It is known that $(Z,\tau\times w^{\ast})^{\ast}=Z$
via the coupling \[
z\cdot z^{\prime}:=\left\langle x,x^{\prime\ast}\right\rangle +\left\langle x^{\prime},x^{\ast}\right\rangle ,\quad\text{for }z=(x,x^{\ast}),\ z^{\prime}=(x^{\prime},x^{\prime\ast})\in Z.\]
 For a proper function $f:Z\rightarrow\overline{\mathbb{R}}$ all
the above notions are defined similarly. In addition, with respect
to the natural dual system $(Z,Z)$ induced by the previous coupling,
the conjugate of $f$ is given by\[
f^{\square}:Z\rightarrow\overline{\mathbb{R}},\quad f^{\square}(z)=\sup\{z\cdot z^{\prime}-f(z^{\prime})\mid z^{\prime}\in Z\},\]
 and by the biconjugate formula, $f^{\square\square}=\mathrm{cl}_{\tau\times w^{\ast}}\mathrm{\, conv}\, f$
whenever $f^{\square}$ (or $\mathrm{cl}_{\tau\times w^{\ast}}\mathrm{\, conv}\, f$)
is proper.

To a multi\-function $T:X\rightrightarrows X^{\ast}$ we associate
the following functions: $c_{T}:Z\rightarrow\overline{\mathbb{R}}$,
$c_{T}:=c+\iota_{T}$, $\psi_{T}:Z\rightarrow\overline{\mathbb{R}}$,
$\psi_{T}:=\operatorname*{cl}\,\!_{\tau\times w^{\ast}}\operatorname*{conv}c_{T}$
is the \emph{Penot function} of $T$, $\varphi_{T}:Z\rightarrow\overline{\mathbb{R}}$,
$\varphi_{T}:=c_{T}^{\square}=\psi_{T}^{\square}$ is the \emph{Fitzpatrick
function} of $T$.

From the definition of $\varphi_{T}$ one has (as observed in \cite[Proposition 2]{Voisei:06b})
\begin{equation}
T\subset(\operatorname*{dom}T\times X^{\ast})\cup(X\times\operatorname{Im}\mathrm{\,}T)\subset[\varphi_{T}\geq c].\label{r1}\end{equation}
 Moreover, as observed in several places (see e.g.\ \cite{martinez-legaz svaiter 05,Penot:04,Voisei:amt 06,Voisei:06b,Voisei tscr:06,VZ mmcs 2008}),
\begin{gather}
T\in\mathcal{M}(X)\Longleftrightarrow\operatorname*{conv}c_{T}\geq c\Longleftrightarrow\psi_{T}\ge c\Longleftrightarrow T\subset[\varphi_{T}=c]\Longleftrightarrow T\subset[\varphi_{T}\leq c],\label{r2}\\
T\in\mathcal{M}(X)\Rightarrow T\subset[\psi_{T}=c]\subset[\varphi_{T}=c],\label{r3}\end{gather}
 and \begin{equation}
T\in\mathfrak{M}(X)\Longleftrightarrow T=[\varphi_{T}\leq c]\Longleftrightarrow\big[\varphi_{T}\geq c\text{ and }T=[\varphi_{T}=c]\big].\label{r4}\end{equation}

To the equivalences in (\ref{r4}) we add the following\begin{equation}
T\in\mathfrak{M}(X)\Leftrightarrow T\in\mathcal{M}(X),\ [\varphi_{T}\leq c]\subset T.\label{r4b}\end{equation}

While, from the last equivalence in (\ref{r4}), the direct implication
is obvious, for the converse implication we have, by (\ref{r1}),
that $[\varphi_{T}\leq c]\subset T\subset[\varphi_{T}\geq c]$; whence
$\varphi_{T}\ge c$ and $[\varphi_{T}=c]\subset T$. Since $T$ is
monotone, by (\ref{r3}), we get $T=[\varphi_{T}=c]$ and so, from
(\ref{r4}), $T\in\mathfrak{M}(X)$.

Note that condition $T\in\mathcal{M}(X)$ is not superfluous in (\ref{r4b}).
Take for example the non-monotone $T=\{(x,y)\in\mathbb{R}^{2}\mid xy\ge0\}$
for which $[\varphi_{T}\le c]=\{(0,0)\}\subset T$.

The preceding relations suggest the introduction of the following
classes of functions: \begin{gather*}
\mathcal{F}:=\mathcal{F}(Z):=\{f\in\Lambda(Z)\mid f\geq c\},\\
\mathcal{R}:=\mathcal{R}(Z):=\Gamma_{\tau\times w^{\ast}}(Z)\cap\mathcal{F}(Z),\\
\mathcal{D}:=\mathcal{D}(Z):=\{f\in\mathcal{R}(Z)\mid f^{\square}\geq c\}.\end{gather*}

It is known that $[f=c]\in\mathcal{M}(X)$, for every $f\in\mathcal{F}(Z)$
(see e.g.\ \cite{Penot:04}).

We consider the following classes of multi\-functions $T:X\rightrightarrows X^{\ast}$:

\begin{itemize}
\item $T$ is \emph{representable} in $Z$ if $T=[f=c]$, for some $f\in\mathcal{R}$;
in this case $f$ is called a \emph{representative} of $T$. We denote
by $\mathcal{R}_{T}$ the class of representatives of $T$.
\item $T$ is \emph{dual-representable} if $T=[f=c]$, for some $f\in\mathcal{D}$;
in this case $f$ is called a \emph{d--representative} of $T$. We
denote by $\mathcal{D}_{T}$ the class of d-representatives of $T$.
\item $T$ is of \emph{negative infimum type in $Z$} (NI for short) if
$\varphi_{T}\geq c$ in $Z$. Note that in the case $X$ is a Banach
space, this notion is a weaker form relative to $X^{*}\times X^{**}$
of the original version introduced by Simons \cite{Simons carte}
in the sense that $T$ is NI in the sense of Simons means that $T$
is maximal monotone in $X\times X^{*}$ and $T^{-1}$ is NI in $X^{*}\times X^{**}$
in the present sense. However, if $T^{-1}$ is NI in $X^{*}\times X^{**}$
in the present sense, then $T$ is NI in $X\times X^{*}$. In this
form this notion was first considered in \cite[Remark 3.5]{Voisei tscr:06}.
\item $T$ is \emph{unique} in $Z$ if $T$ is monotone and admits a unique
maximal monotone extension in $Z$. In the context of Banach spaces
in \cite{martinez-legaz svaiter 05} the preceding notion was considered
under the name of pre-maximal monotone operator. Previously, the uniqueness
notion was used in \cite{Gossez72,Simons:96,Bauschke-Borwein 99};
$T$ is unique in their sense of iff $T^{-1}$ (as a subset of $X^{*}\times X^{**}$)
is unique in $X^{*}\times X^{**}$ in the present sense.
\end{itemize}
Fitzpatrick proved in \cite[Theorem 2.4]{Fitzpatrick:88} that \begin{equation}
f\in\mathcal{F}\Longrightarrow[f=c]\subset[f^{\square}=c],\label{r5}\end{equation}
 from which, it follows that \begin{equation}
[f=c]=[f^{\square}=c]\qquad\forall f\in\mathcal{D}.\label{r6}\end{equation}
 As observed in \cite[Remark 3.6]{Voisei tscr:06} (see also \cite{VZ mmcs 2008}),
if $f\in\mathcal{R}_{T}$, that is, $T$ is representable with $f$
a representative of $T$, then we have \begin{equation}
\varphi_{T}\leq f\leq\psi_{T},\quad\varphi_{T}\leq f^{\square}\leq\psi_{T}.\label{r7}\end{equation}
 Hence, if $T\in\mathfrak{M}(X)$ and $f\in\mathcal{R}_{T}$ then
$f\in\mathcal{D}_{T}$. Moreover, \begin{equation}
\big(f\in\mathcal{R},\ \ T\subset[f=c]\big)\Longrightarrow\big(f\le\psi_{T},\ \ [\psi_{T}=c]\subset[f=c]\big).\label{r8}\end{equation}
 Indeed, $T\subset[f=c]$ imply $f\le c_{T}$, whence, because $f\in\mathcal{R}$,
$f\le\psi_{T}$; therefore $[\psi_{T}=c]\subset[f=c]$.

In particular, (\ref{r8}) shows that $[\psi_{T}=c]$ is the smallest
representable extension of $T\in\mathcal{M}(X)$ in the sense of graph
inclusion and so $[\psi_{T}=c]=\bigcap\{M\mid T\subset M,\ M\ \mathrm{representable}\}$.

\begin{remark} \label{types repres} For a Banach space $X$ with
topological dual $X^{*}$, both endowed with their strong topology
denoted by {}``$s$'', there have been considered three different
notions of representability of a monotone operator $T:X\rightrightarrows X^{*}$:

\begin{itemize}
\item the one above (first introduced in \cite{Voisei:amt 06}) that will
be used throughout this article,
\item Martinez-Legaz--Svaiter (MLS) representability. In \cite{martinez-legaz svaiter 05}
one says that $T$ is representable if there exists an $s\times s-$lsc
convex function $h:X\times X^{*}\to\overline{\mathbb{R}}$ such that
$h\ge c$ and $T=[h=c]$. Note that these two types of representability
coincide if $X$ is reflexive but they are different in the non-reflexive
case. Indeed, consider $E$ a non-reflexive Banach space, $X:=E^{*}$
and $S:=\{0\}\times E\subset X\times X^{*}$ (here $E$ is identified
with its image by the canonical injection of $E$ into $E^{**}=X^{*}$).
Clearly $S$ is a skew strongly closed linear space in $X\times X^{*}$,
and $S=[\iota_{S}=c]$ which makes $S$ MLS-representable. However
$S$ is not representable in our sense because its Penot function
in $X\times X^{*}$, namely, $\psi_{S}=\operatorname*{cl}_{s\times w^{*}}\iota_{S}=\iota_{\{0\}\times X^{*}}\neq\operatorname*{cl}_{s\times s}\iota_{S}=\iota_{\{0\}\times E}$
and so $[\psi_{S}=c]=\{0\}\times X^{*}=\operatorname*{cl}_{s\times w^{*}}S\supsetneqq S$.
\item Borwein representability involves no topology (see \cite[Section 2.2]{Borwein06-via})
or the strong topology on $X\times X^{*}$ (see \cite[page 3918]{Borwein07-gresita})
and the equality $T=[h=c]$ in the MLS-representability is replaced
by the inclusion $T\subset[h=c]$ for an $s\times s-$lsc convex function
$h:X\times X^{*}\to\overline{\mathbb{R}}$ with $h\ge c$. While it
is clear that Borwein representability of a monotone operator is different
from the other two, in the case of a maximal monotone operator Borwein's
representability coincides with the MLS-representability.
\end{itemize}
\end{remark}

\section{Types of monotone operators}

The next characterizations of representability and maximality were
stated in the context of Banach spaces but their arguments work in
a locally convex settings (see also \cite[Theorem 1]{VZ mmcs 2008}).

\begin{theorem} \label{caracter-max} \emph{(\cite[Ths.\ 2.2, 2.3]{Voisei:amt 06})}

\emph{(i)} $T$ is representable iff $T\in\mathcal{M}(X)$ and $T=[\psi_{T}=c]$,
that is, $\psi_{T}$ is a representative of $T$ (or $\psi_{T}\in\mathcal{R}_{T}$),

$\emph{(ii)}$ $T$ is maximal monotone iff $T$ is representable
and $T$ is of negative infimum type, that is, $\varphi_{T}$ is a
(d--)representative of $T$ (or $\varphi_{T}\in\mathcal{R}_{T}$,
or $\varphi_{T}\in\mathcal{D}_{T}$). \end{theorem}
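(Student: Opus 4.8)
The plan is to prove both equivalences by assembling the relations already recorded in the excerpt, chiefly the characterizations in $(\ref{r2})$, $(\ref{r3})$, $(\ref{r4})$, $(\ref{r4b})$ and the inclusion/monotonicity properties in $(\ref{r7})$ and $(\ref{r8})$. I would treat part (i) first, since part (ii) builds on it through the notion of representability.

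For part (i), I would argue both directions. For the forward implication, suppose $T$ is representable, so $T=[f=c]$ for some $f\in\mathcal{R}$. Since $f\geq c$ on $Z$, the set $[f=c]$ is monotone (by the fact that $[f=c]\in\mathcal{M}(X)$ for every $f\in\mathcal{F}$), giving $T\in\mathcal{M}(X)$. Applying $(\ref{r8})$ with this $f$ (noting $T\subset[f=c]=T$) yields $f\le\psi_T$ and $[\psi_T=c]\subset[f=c]=T$; combined with $(\ref{r3})$, which gives $T\subset[\psi_T=c]$ because $T$ is monotone, I obtain $T=[\psi_T=c]$. Since $\psi_T\in\mathcal{R}$ (it is $\tau\times w^\ast$-lsc, convex, and dominates $c$ by $(\ref{r2})$), this shows $\psi_T\in\mathcal{R}_T$. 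Conversely, if $T\in\mathcal{M}(X)$ and $T=[\psi_T=c]$, then $\psi_T$ is by construction a representative, so $T$ is representable; the verification that $\psi_T\in\mathcal{F}$ again uses $(\ref{r2})$.

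For part (ii), I would again split into two implications. If $T$ is representable and NI (i.e. $\varphi_T\geq c$), then by part (i) $T\in\mathcal{M}(X)$, so by $(\ref{r3})$ we have $T\subset[\varphi_T=c]$. The NI hypothesis $\varphi_T\geq c$ together with $(\ref{r1})$ forces $[\varphi_T\le c]\subset T$ (arguing exactly as in the text's justification of $(\ref{r4b})$), and then $(\ref{r4b})$ delivers $T\in\mathfrak{M}(X)$. To see that $\varphi_T$ is a representative, I would invoke $(\ref{r7})$: since $T$ is representable with some $f\in\mathcal{R}_T$, we have $\varphi_T\le f\le\psi_T$ and from part (i) $f$ may be taken to be $\psi_T$, sandwiching $\varphi_T$ between representatives whose common zero-level set (against $c$) is $T$; combined with $\varphi_T\geq c$ this gives $\varphi_T\in\mathcal{R}_T$, and the further condition $\varphi_T^\square\geq c$ needed for $\varphi_T\in\mathcal{D}_T$ follows from $\varphi_T^\square=\psi_T\geq c$ via $(\ref{r2})$. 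Conversely, if $T$ is maximal monotone, then $T$ is monotone and $(\ref{r4})$ gives $\varphi_T\geq c$, so $T$ is NI; moreover by $(\ref{r3})$ $T\subset[\psi_T=c]$ and the maximality forces equality, so part (i) shows $T$ is representable.

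The main obstacle I anticipate is bookkeeping rather than any deep idea: one must be careful that the functions $\psi_T$ and $\varphi_T$ genuinely lie in the right classes ($\mathcal{R}$ versus $\mathcal{D}$) under exactly the stated hypotheses, and that the passage from ``$T\subset[\varphi_T=c]$'' to the reverse inclusion uses monotonicity and NI in the correct order. The sharpest point is identifying $\varphi_T$ as a genuine representative in part (ii); here the sandwich $\varphi_T\le\psi_T$ from $(\ref{r7})$ is essential, because it is what promotes $T\subset[\varphi_T=c]$ to $T=[\varphi_T=c]$ once we know $T=[\psi_T=c]$. Everything else is a direct reassembly of the displayed equivalences.
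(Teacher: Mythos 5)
Your part (i) is correct and complete: combining (\ref{r8}) (applied to a representative $f$ of $T$) with (\ref{r3}) and the fact that $[f=c]\in\mathcal{M}(X)$ for $f\in\mathcal{F}$ is exactly what is needed; the converse direction of (ii) is also fine, modulo observing explicitly that $[\psi_T=c]$ is monotone (being $[f=c]$ for $f=\psi_T\in\mathcal{F}$), so that maximality can force $T=[\psi_T=c]$. The genuine gap is in the forward direction of (ii), at the decisive step: you claim that the NI hypothesis $\varphi_T\ge c$ together with (\ref{r1}) forces $[\varphi_T\le c]\subset T$, ``arguing exactly as in the text's justification of (\ref{r4b})''. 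But that justification runs in the opposite direction: it \emph{assumes} $[\varphi_T\le c]\subset T$ and \emph{deduces} $\varphi_T\ge c$; nothing in it lets you reverse the implication. Indeed, for monotone $T$ the inclusion $[\varphi_T\le c]\subset T$ is, by (\ref{r4b}), equivalent to maximality itself, so your step begs the question, and it is false for general NI monotone operators: take $M=\{(x,x)\mid x\in\mathbb{R}\}\subset\mathbb{R}\times\mathbb{R}$ and $T=M\setminus\{(0,0)\}$. Since the supremum defining $\varphi_T$ is that of a continuous function over a dense subset of $M$, we get $\varphi_T=\varphi_M\ge c$, so $T$ is monotone and NI, yet $[\varphi_T\le c]=M\not\subset T$. (Of course this $T$ is not representable, since $[\psi_T=c]=M$ --- and representability is precisely the hypothesis your step never uses.)

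The missing ingredient is Fitzpatrick's inclusion (\ref{r5}) (equivalently (\ref{r6})), which is where representability must enter. Under NI, $\varphi_T$ is proper, convex and $\ge c$, i.e.\ $\varphi_T\in\mathcal{F}$, so (\ref{r5}) gives $[\varphi_T=c]\subset[\varphi_T^{\square}=c]=[\psi_T=c]=T$, the middle equality by biconjugation ($\varphi_T^{\square}=c_T^{\square\square}=\psi_T$) and the last one by part (i). Together with $T\subset[\varphi_T=c]$ from (\ref{r3}) and $\varphi_T\ge c$, relation (\ref{r4}) yields $T\in\mathfrak{M}(X)$, and $T=[\varphi_T=c]$ shows $\varphi_T\in\mathcal{R}_T$; since $\varphi_T^{\square}=\psi_T\ge c$, in fact $\varphi_T\in\mathcal{D}_T$. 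Note that this also repairs your ``sandwich'' argument for $\varphi_T\in\mathcal{R}_T$: the inequalities $c\le\varphi_T\le\psi_T$ from (\ref{r7}) only give $T=[\psi_T=c]\subset[\varphi_T=c]$, i.e.\ one inclusion; the reverse inclusion $[\varphi_T=c]\subset T$ cannot be extracted from (\ref{r7}) alone and again requires (\ref{r5}).
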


It is important to notice that, in the context of non-reflexive Banach
spaces, the previous characterization of maximality fails if our representability
is replaced by the MLS-representability. Indeed, let $X$ be a non-reflexive
Banach space and $S=\{0\}\times X\subsetneq\{0\}\times X^{**}\subset Z^{*}$.
As previously seen in Remark \ref{types repres}, $S$ is MLS-representable
since $\iota_{S}$ is strongly lsc convex, $\iota_{S}\ge c$, and
$S=[\iota_{S}=c]$, while $S$ is NI in $Z^{*}$ because $\varphi_{S}(x^{*},x^{**})=\infty$
for $x^{**}\neq0$ and $\varphi_{S}(x^{*},0)=0$ for $x^{*}\in X^{*}$,
and clearly $S$ is not maximal monotone in $Z^{*}$.

\strut

As in \cite{martinez-legaz svaiter 05} (for $X$ a Banach space and
with a different notation), for a subset $A$ of $Z$ we set\[
A^{+}:=[\varphi_{A}\leq c]=\{z\in Z\mid c(z-w)\geq0,\ \forall w\in A\},\]
 the set of all $z\in Z$ that are monotonically related to (m.r.t.
for short) $A$ and $A^{++}:=(A^{+})^{+}$. Note that $\emptyset^{+}=Z$,
$Z^{+}=\emptyset$, $A\subset B\subset Z$ implies $B^{+}\subset A^{+}$
and $\left(\cup_{i\in I}A_{i}\right)^{+}=\cap_{i\in I}A_{i}^{+}$
for any family $(A_{i})_{i\in I}$ of subsets of $Z$. Moreover, for
$A\subset Z$ one has:\begin{equation}
A\in\mathcal{M}(X)\Leftrightarrow A\subset A^{+},\quad A\in\mathfrak{M}(X)\Leftrightarrow A=A^{+}.\label{m1}\end{equation}
 For $T\in\mathcal{M}(X)$ we have \begin{equation}
T^{+}={\textstyle \bigcup}\{M\mid M\in\mathfrak{M}(X),\ T\subset M\},\label{m2}\end{equation}
 and so\begin{equation}
T^{++}={\textstyle \bigcap}\{M\mid M\in\mathfrak{M}(X),\ T\subset M\}.\label{m3}\end{equation}
 In particular, for $T\in\mathcal{M}(X)$ we have that $T^{++}\in\mathcal{M}(X)$
and $T\subset T^{++}\subset T^{+}$. Moreover, from (\ref{m2}) it
is easily noticed that $T\in\mathcal{M}(X)$ is unique iff $T^{+}$
is maximal monotone.

Note that the above mentioned properties of $T^{+}$ can also be found
in \cite{martinez-legaz svaiter 05}.

\begin{lemma} \label{ineq A+}\emph{(i)} For every $A\subset Z$
we have $\psi_{A}\geq\varphi_{A^{+}}$ in $Z$.

\emph{(ii)} If $T\in\mathcal{M}(X)$ then $T^{+}$ is NI, that is
$\varphi_{T^{+}}\geq c$ in $Z$. \end{lemma}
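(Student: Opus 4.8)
The plan is to treat the two parts separately: (i) rests on a single order--reversing conjugation, while (ii) rests on an elementary identity for the coupling together with the inclusion $T\subset T^{+}$.

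For (i), the first step is to record the pointwise inequality $c_{A^{+}}\ge\varphi_{A}$ on all of $Z$. On the complement of $A^{+}$ the function $c_{A^{+}}$ equals $+\infty$, while on $A^{+}=[\varphi_{A}\le c]$ we have $c_{A^{+}}=c\ge\varphi_{A}$ by the very definition of $A^{+}$, so the inequality holds everywhere. Taking the $\square$--conjugate reverses it, giving $\varphi_{A^{+}}=c_{A^{+}}^{\square}\le\varphi_{A}^{\square}$. It then remains to identify the right-hand side with $\psi_{A}$: since $\varphi_{A}=c_{A}^{\square}$ by definition, one has $\varphi_{A}^{\square}=c_{A}^{\square\square}=\operatorname*{cl}_{\tau\times w^{\ast}}\operatorname*{conv}c_{A}=\psi_{A}$, the middle equality being the biconjugate formula. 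Chaining these yields $\psi_{A}\ge\varphi_{A^{+}}$.

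For (ii), the engine is the identity $z\cdot w-c(w)=c(z)-c(z-w)$ for all $z,w\in Z$, obtained by expanding the coupling (its diagonal case is $z\cdot z=2c(z)$). This rewrites the Fitzpatrick function of any set $A$ as $\varphi_{A}(z)=c(z)-\inf_{w\in A}c(z-w)$, so that $\varphi_{T^{+}}\ge c$ amounts to $\inf_{w\in T^{+}}c(z-w)\le 0$ for every $z\in Z$; in each regime I would exhibit an explicit witness $w\in T^{+}$ with $c(z-w)\le 0$. If $z\in T^{+}$, the general inclusion $A\subset[\varphi_{A}\ge c]$ from (\ref{r1}), applied with $A=T^{+}$, already gives $\varphi_{T^{+}}(z)\ge c(z)$ (equivalently, $w=z$ yields $c(z-z)=0$). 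If $z\notin T^{+}=[\varphi_{T}\le c]$, then $\varphi_{T}(z)>c(z)$ forces some $w_{0}\in T$ with $z\cdot w_{0}-c(w_{0})>c(z)$, i.e.\ $c(z-w_{0})<0$; since $T\in\mathcal{M}(X)$ gives $T\subset T^{+}$ by (\ref{m1}), this $w_{0}$ lies in $T^{+}$ and witnesses $\varphi_{T^{+}}(z)\ge c(z)$.

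The substantive content is concentrated in two small places. In (i) the only subtlety is properness: the equality $\varphi_{A}^{\square}=\psi_{A}$ invokes the biconjugate formula, so I would flag that it is valid whenever $\psi_{A}$ (equivalently $\varphi_{A}$) is proper, and dispatch the degenerate cases $A=\emptyset$ or $A^{+}=\emptyset$ directly. In (ii) the only genuine idea is the self-coupling identity packaged above, after which the argument is just bookkeeping; I expect no real obstacle beyond keeping the two regimes $z\in T^{+}$ and $z\notin T^{+}$ cleanly separated and remembering that monotonicity of $T$ enters solely through $T\subset T^{+}$.
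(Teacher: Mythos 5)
Your proof is correct, but both halves run along genuinely different lines from the paper's. For (i) the paper argues on the primal side: from $A\subset A^{++}=[\varphi_{A^{+}}\le c]$ it gets $\varphi_{A^{+}}\le c_{A}$, and then concludes $\varphi_{A^{+}}\le\psi_{A}$ at once because $\varphi_{A^{+}}$ is convex and $w\times w^{\ast}$--lsc while $\psi_{A}$ is by definition the greatest such minorant of $c_{A}$; this bypasses conjugation entirely and so never meets the properness issue you rightly flag. Your dual route through $c_{A^{+}}\ge\varphi_{A}$, conjugation, and the biconjugate identity $\varphi_{A}^{\square}=c_{A}^{\square\square}=\psi_{A}$ is valid, but only because you dispatch the degenerate cases ($A=\emptyset$, $A^{+}=\emptyset$, $\varphi_{A}$ improper) separately, which the paper's argument never has to do. For (ii) the difference is more substantial: the paper takes a maximal monotone extension $M$ of $T$ (Zorn's lemma), notes $M\subset T^{+}$, and invokes $\varphi_{M}\ge c$ from the maximality characterization (\ref{r4}), whereas your argument with the identity $z\cdot w-c(w)=c(z)-c(z-w)$ and the two witness regimes $z\in T^{+}$, $z\notin T^{+}$ is entirely elementary: it uses only $T\subset T^{+}$ from (\ref{m1}) and avoids both Zorn's lemma and the maximality theory. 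What each buys: the paper's proofs are shorter given the surrounding machinery; yours are more self-contained, and your (ii) in particular shows that the NI property of $T^{+}$ needs nothing beyond the definition of monotone relatedness.
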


\begin{proof} (i) Notice that $A\subset A^{++}=[\varphi_{A^{+}}\le c]$
for every $A\subset Z$. Hence $\varphi_{A^{+}}\le c_{A}$ and so
$\varphi_{A^{+}}\le\psi_{A}$ in $Z$ since $\varphi_{A^{+}}$ is
convex and $w\times w^{*}-$lsc.

(ii) Let $M\subset Z$ be a maximal monotone extension of $T$. Then
$M\subset T^{+}$, and so $\varphi_{T^{+}}\ge\varphi_{M}\ge c$ in
$Z$. \end{proof}

\strut

For uniqueness one has the following characterizations (see also Proposition
\ref{dc unic} below for double-cones); these characterizations can
also be found in \cite[Proposition 36]{martinez-legaz svaiter 05}),
\cite[Theorem 19]{Simons:96}, and \cite[Fact 2.6]{Bauschke-Borwein 99}.

\begin{proposition} \label{caracter-unic} Let $T\in\mathcal{M}(X)$.
TFAE:

\emph{(i)} $T$ is unique,

\emph{(ii)} $T^{+}$ is monotone,

\emph{(iii)} $T^{+}$ is maximal monotone,

\emph{(iv)} $T^{+}=T^{++}$,

\emph{(v)} $T^{++}$ is maximal monotone.

In this case the unique maximal monotone extension of $T$ is $T^{+}=T^{++}$.
\end{proposition}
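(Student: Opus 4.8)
The plan is to deduce everything from the algebra of the polar operation $A\mapsto A^{+}$ recorded before the statement, namely its order-reversing character, the characterizations (\ref{m1}), and the union/intersection formulas (\ref{m2})--(\ref{m3}); no fresh analysis with $\varphi_T$ or $\psi_T$ is needed. Two facts are in hand from the outset: since $T\in\mathcal{M}(X)$ gives $T\subset T^{+}$ by (\ref{m1}), applying $(\cdot)^{+}$ (which reverses inclusions) yields the always-valid inclusion $T^{++}\subset T^{+}$; and the equivalence (i)$\Leftrightarrow$(iii) has already been observed to follow from (\ref{m2}). It therefore suffices to close the loop by establishing (ii)$\Leftrightarrow$(iii)$\Leftrightarrow$(iv), the implication (iii)$\Rightarrow$(v), and the implication (v)$\Rightarrow$(i).

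First I would treat (ii)$\Leftrightarrow$(iv). By (\ref{m1}), $T^{+}$ is monotone iff $T^{+}\subset(T^{+})^{+}=T^{++}$; combined with $T^{++}\subset T^{+}$, this is exactly $T^{+}=T^{++}$, which is (iv). Next, (iv)$\Leftrightarrow$(iii): if $T^{+}=T^{++}=(T^{+})^{+}$, then the second characterization in (\ref{m1}), applied to $A=T^{+}$, says precisely that $T^{+}$ is maximal monotone; conversely, $T^{+}$ maximal monotone gives $T^{+}=(T^{+})^{+}=T^{++}$. Since (iii) trivially implies (ii), the conditions (ii), (iii), (iv) are equivalent.

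For the remaining conditions, assuming (iii)--(iv) we have $T^{++}=T^{+}$ maximal monotone, which is (v). The one genuinely nontrivial step, and the main obstacle, is (v)$\Rightarrow$(i). Here I would invoke (\ref{m3}): $T^{++}=\bigcap\{M\mid M\in\mathfrak{M}(X),\ T\subset M\}$, where the family of maximal monotone extensions of $T$ is nonempty by the usual Zorn's lemma argument (a chain of monotone extensions has the monotone union as an upper bound, this being purely algebraic and topology-independent). If $T^{++}$ is maximal monotone, then, containing $T$, it is itself a member of this family, hence contained in every member $M$; but $T^{++}\subset M$ with $T^{++}$ maximal and $M$ monotone forces $T^{++}=M$. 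Thus the family reduces to the single element $T^{++}$, i.e.\ $T$ admits a unique maximal monotone extension, which is (i). Combined with the recorded equivalence (i)$\Leftrightarrow$(iii), the cycle closes, and under any (hence all) of the conditions the unique maximal monotone extension is $T^{+}=T^{++}$ by (iv) together with (iii).
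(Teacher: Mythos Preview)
Your proof is correct and follows essentially the same route as the paper's: both arguments work purely with the order-reversing operation $A\mapsto A^{+}$, the characterizations (\ref{m1}), and the formulas (\ref{m2})--(\ref{m3}), establishing the same cycle of implications (with your (v)$\Rightarrow$(i) via (\ref{m3}) spelled out in slightly more detail than the paper's one-line version). The only cosmetic difference is that you package (ii)$\Leftrightarrow$(iv) as a single equivalence while the paper routes it through (iii)$\Rightarrow$(ii) and (ii)$\Rightarrow$(iv) separately.
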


\begin{proof} Note that (i) $\Rightarrow$ (iii) is true because
of (\ref{m2}), (iii) $\Rightarrow$ (ii) is obvious, while (iii)
$\Leftrightarrow$ (iv) and (iv) $\Rightarrow$ (v) follow from the
second part of (\ref{m1}). The implication (ii) $\Rightarrow$ (iv)
holds since whenever $T^{+}$ is monotone we have, by the first part
in (\ref{m1}), that $T^{+}\subset T^{++}$; the reversed inclusion
being true because $T$ is monotone. To complete the proof it suffices
to show (v) $\Rightarrow$ (i). To this end we see from (\ref{m3})
that if $T^{++}$ is maximal monotone then $T^{++}$ is the unique
maximal monotone extension of $T$. \end{proof}

\strut

As it would be expected from Theorem \ref{caracter-max}, the following
characterization theorem shows that the NI condition for $T\in\mathcal{M}(X)$
is actually equivalent to the maximality of $[\psi_{T}=c]$ which
is the minimal representable operator that contains $T$.

\begin{proposition} \label{caracter-NI}Let $T\in\mathcal{M}(X)$.
Then

\emph{(i)} $\varphi_{T}=\varphi_{[\psi_{T}=c]}$ and $\psi_{T}=\psi_{[\psi_{T}=c]}$;
in particular $T^{+}=[\psi_{T}=c]^{+}$.

\emph{(ii)} $T$ is NI iff $[\psi_{T}=c]$ is NI iff $[\psi_{T}=c]$
is maximal monotone iff $T$ has a unique representable extension.

\emph{(iii)} If $T$ is NI then $T$ is unique and \[
T^{+}=[\psi_{T}=c]=[\varphi_{T}=c]=[\varphi_{T}\leq c]\]
 is the unique representable extension and the unique maximal monotone
extension of $T$. \end{proposition}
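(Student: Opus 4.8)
The plan is to reduce the whole statement to the single identity in part (i), from which parts (ii) and (iii) follow by assembling the machinery already in place. Throughout write $S:=[\psi_{T}=c]$; by (\ref{r3}) we have $T\subset S$, and by (\ref{r8}) together with the remark following it, $S$ is the smallest representable extension of $T\in\mathcal{M}(X)$, so in particular $S$ is representable and $S\in\mathcal{M}(X)$. The heart of (i) is the equality $\psi_{T}=\psi_{S}$. For ``$\geq$'' I would note that $T\subset S$ gives $\iota_{S}\le\iota_{T}$, hence $c_{S}\le c_{T}$, and since $\operatorname*{cl}\operatorname*{conv}$ is order preserving, $\psi_{S}=\operatorname*{cl}\operatorname*{conv}c_{S}\le\operatorname*{cl}\operatorname*{conv}c_{T}=\psi_{T}$. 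For ``$\leq$'' I would check that $\psi_{T}\le c_{S}$ pointwise: on $S=[\psi_{T}=c]$ one has $c_{S}=c=\psi_{T}$, while off $S$ one has $c_{S}=+\infty$; since $\psi_{T}$ is proper, $\tau\times w^{\ast}$--lsc and convex (it is a Penot function, and $\psi_{T}\ge c$ by (\ref{r2})), it is a closed convex minorant of $c_{S}$, whence $\psi_{T}\le\operatorname*{cl}\operatorname*{conv}c_{S}=\psi_{S}$. This yields $\psi_{T}=\psi_{S}$, and then $\varphi_{T}=\psi_{T}^{\square}=\psi_{S}^{\square}=\varphi_{S}$; the ``in particular'' is immediate, since $T^{+}=[\varphi_{T}\le c]=[\varphi_{S}\le c]=S^{+}$.

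For (ii) I would run the equivalences off (i). Because $\varphi_{T}=\varphi_{S}$, $T$ is NI (i.e.\ $\varphi_{T}\ge c$) iff $S$ is NI (i.e.\ $\varphi_{S}\ge c$). As $S$ is representable, Theorem \ref{caracter-max}(ii) gives that $S$ is NI iff $S$ is maximal monotone. Finally I would identify ``$S$ maximal monotone'' with ``$T$ has a unique representable extension'': if $M$ is any representable extension of $T$, then $S\subset M$ by (\ref{r8}) and $M\in\mathcal{M}(X)$, so maximality of $S$ forces $M=S$, giving uniqueness; conversely, $S$ admits a maximal monotone extension $M$, which is representable (maximal monotone $\Rightarrow$ representable, again by Theorem \ref{caracter-max}(ii)) and hence a representable extension of $T$, so uniqueness forces $M=S$, i.e.\ $S$ is maximal monotone.

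For (iii) I would assume $T$ is NI; then (ii) makes $S$ maximal monotone and the unique representable extension of $T$. Applying (\ref{r4}) to $S$ and substituting $\varphi_{S}=\varphi_{T}$ from (i) yields $S=[\varphi_{S}=c]=[\varphi_{T}=c]$ and $S=[\varphi_{S}\le c]=[\varphi_{T}\le c]$; moreover $S=S^{+}=T^{+}$, where $S=S^{+}$ is maximality of $S$ via (\ref{m1}) and $S^{+}=T^{+}$ is the ``in particular'' of (i). Collecting these equalities produces the displayed chain $T^{+}=[\psi_{T}=c]=[\varphi_{T}=c]=[\varphi_{T}\le c]$. Since $T^{+}=S$ is maximal monotone, the criterion recorded just before Lemma \ref{ineq A+} (a monotone operator is unique iff its $+$ is maximal monotone) shows $T$ is unique, and Proposition \ref{caracter-unic} identifies $T^{+}=T^{++}=S$ as the unique maximal monotone extension; that it is also the unique representable extension is part (ii).

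The main obstacle I anticipate is the inequality $\psi_{T}\le\psi_{S}$ in part (i): everything else is bookkeeping with the relations (\ref{r2})--(\ref{r8}), (\ref{m1}) and the two cited theorems, whereas this step genuinely uses the closedness and convexity of the Penot function $\psi_{T}$ together with the fact that it coincides with $c$ exactly on $S$, so that $\psi_{T}$ is itself a closed convex minorant of $c_{S}=c+\iota_{S}$. Once $\psi_{T}=\psi_{S}$ (equivalently $\varphi_{T}=\varphi_{S}$) is secured, the NI condition, the maximal monotonicity of $S$, and the uniqueness of $T$ all become interchangeable through the existing characterizations.
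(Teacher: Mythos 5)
Your proposal is correct and takes essentially the same approach as the paper: part (i) carries all the weight, and (ii), (iii) are assembled from Theorem \ref{caracter-max}(ii), (\ref{r8}), (\ref{r4}), (\ref{m1}) and Proposition \ref{caracter-unic} exactly as in the paper's own proof. The only (cosmetic) difference is in (i): you prove $\psi_{T}=\psi_{S}$ first, using that $\psi_{T}$ is a closed convex minorant of $c_{S}$, and get $\varphi_{T}=\varphi_{S}$ by conjugation, whereas the paper proves $\varphi_{T}=\varphi_{R}$ first, via the Fenchel inequality $\varphi_{T}(z)\geq z\cdot w-\psi_{T}(w)$ restricted to $w\in R=[\psi_{T}=c]$, and then conjugates---these are conjugate-dual renditions of the same step, both exploiting that $\psi_{T}=c$ on $[\psi_{T}=c]$.
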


\begin{proof} Set $R:=[\psi_{T}=c]$ the smallest representable extension
of $T\in\mathcal{M}(X)$.

(i) From (\ref{r3}), we see that $T\subset R$, and so $\varphi_{T}\leq\varphi_{R}$.
Conversely, by the Fenchel inequality, for all $z,w\in Z$\[
\varphi_{T}(z)\geq z\cdot w-\psi_{T}(w).\]
 Pass to supremum over $w\in R$ to find $\varphi_{T}(z)\geq\varphi_{R}(z)$
for every $z\in Z$. The other relation is obtained by conjugation.

(ii) According to (i) and from Theorem \ref{caracter-max} (ii) applied
for $R$ we get that $R$ is maximal monotone iff $R$ is NI iff $T$
is NI.

Assume now that $R$ is maximal monotone and $h\in\mathcal{R}$ is
such that $T\subset[h=c]$. By (\ref{r8}) we have $R\subset[h=c]$.
Because $R$ is maximal monotone and $[h=c]$ is monotone we obtain
that $R=[h=c]$, and so $R$ is the unique representable extension
of $T$. Conversely, assume that $T$ has a unique representable extension.
Since any maximal monotone extension of $T$ is also representable
it follows that $R$ is maximal monotone and the unique representable
extension of $T$.

(iii) Assume that $T$ is NI. Hence $T^{+}=[\varphi_{T}\le c]=[\varphi_{T}=c]$.
By (ii) $R$ is the unique representable extension of $T$; in particular
$R$ is the unique maximal monotone extension of $T$, and so, according
to Proposition \ref{caracter-unic}, $R=T^{+}$. \end{proof}

\strut

Recently in \cite[Theorem 4.2]{Bauschke Wang Yao 2008} it is proved,
in the context of a Banach space, that a maximal monotone operator
with a convex graph is in fact affine. This result holds in locally
convex spaces as a consequence of the broader fact that the affine
hull of a monotone convex multifunction remains monotone.

\begin{proposition} \label{aff-conv}Let $T\subset X\times X^{*}$.
TFAE:

\emph{(i)} $c_{T}$ is convex,

\emph{(ii)} $T$ is monotone and convex.

In this case $\operatorname*{aff}T$ is monotone and $T\subset\operatorname*{aff}T\subset T^{+}$.
In particular, if $T$ is maximal monotone and convex then $T$ is
affine. \end{proposition}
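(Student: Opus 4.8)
The plan is to prove the equivalence (i) $\Leftrightarrow$ (ii) directly from the definition of $c_T := c + \iota_T$ and then use it to deduce the statements about the affine hull. For the equivalence, recall that $c_T$ is convex precisely when its epigraph is convex, and since $c_T$ takes the value $+\infty$ exactly off $T$, the finiteness of a convex combination forces the underlying points to lie in $T$; so $T = \operatorname*{dom} c_T$ must be convex. It then remains to relate convexity of $c_T$ on the convex set $T$ to monotonicity. The key computation is the following: for $z_1 = (x_1, x_1^*)$ and $z_2 = (x_2, x_2^*)$ in $T$ and $\lambda \in (0,1)$, a short expansion shows
\begin{equation}
\lambda c(z_1) + (1-\lambda) c(z_2) - c(\lambda z_1 + (1-\lambda) z_2) = \lambda(1-\lambda)\langle x_1 - x_2, x_1^* - x_2^* \rangle. \label{bilin}
\end{equation}
Since $c$ is a (symmetric) bilinear form on $Z$, the left-hand side is its second-order deviation from linearity along the segment, which is exactly this product. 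This identity is the crux of the whole proposition.

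From \eqref{bilin} the equivalence is immediate. If $T$ is monotone and convex, then for $z_1, z_2 \in T$ the right-hand side of \eqref{bilin} is $\geq 0$, so $c(\lambda z_1 + (1-\lambda) z_2) \leq \lambda c(z_1) + (1-\lambda) c(z_2)$; combined with the convexity of the domain $T$ (on which $\iota_T = 0$), this gives convexity of $c_T$. Conversely, if $c_T$ is convex then $T = \operatorname*{dom} c_T$ is convex, and restricting convexity of $c_T$ to pairs in $T$ yields $c(\lambda z_1 + (1-\lambda) z_2) \leq \lambda c(z_1) + (1-\lambda) c(z_2)$, so the right-hand side of \eqref{bilin} is nonnegative, i.e. $\langle x_1 - x_2, x_1^* - x_2^* \rangle \geq 0$, which is monotonicity.

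For the second assertion, suppose $T$ is monotone and convex, and let $z_1, z_2 \in \operatorname*{aff} T$; I want $\langle x_1 - x_2, x_1^* - x_2^* \rangle \geq 0$. Writing each $z_i$ as an affine combination of points of $T$ and using the bilinearity of $c$, the difference $z_1 - z_2$ is a genuine linear combination of differences of points of $T$, which lets me reduce the inequality to the monotonicity already available on $T$. The cleanest route is to exploit identity \eqref{bilin} again: monotonicity of a convex set $T$ is equivalent to $c(z - w) \geq 0$ for all $z, w \in T$, and one checks that this quadratic-type inequality is preserved under passing to affine combinations, since $c$ is bilinear and the affine hull consists of combinations with coefficients summing to $1$. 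This yields $T \subset \operatorname*{aff} T$ monotone, and $\operatorname*{aff} T \subset T^+$ follows because every monotone extension of $T$ lies in $T^+$ by the definition of $T^+ = [\varphi_T \leq c]$ together with \eqref{m1}.

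Finally, the particular case is a one-line consequence: if $T$ is maximal monotone and convex, then by what precedes $\operatorname*{aff} T$ is a monotone extension of $T$, so maximality forces $T = \operatorname*{aff} T$, i.e. $T$ is affine. I expect the main obstacle to be the middle step: verifying carefully that monotonicity propagates from $T$ to $\operatorname*{aff} T$. The subtlety is that an affine combination can carry negative coefficients, so one cannot simply invoke convexity; the argument must genuinely use that $c$ is bilinear (not merely convex) and that the relevant coefficients sum to $1$, so that $z_1 - z_2$ re-expresses as an honest combination of differences lying in the double-cone generated by $T - T$, on which the sign of $c$ is controlled by the monotonicity inequality for $T$.
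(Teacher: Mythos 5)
Your equivalence (i) $\Leftrightarrow$ (ii) is correct and is essentially the paper's own computation: your identity for the deviation from linearity of $c$ along a segment is exactly the paper's expansion $c_{T}(\lambda z+(1-\lambda)z^{\prime})-\lambda c_{T}(z)-(1-\lambda)c_{T}(z^{\prime})=-\lambda(1-\lambda)c(z-z^{\prime})$. Also, your derivation of $\operatorname*{aff}T\subset T^{+}$ \emph{from} monotonicity of $\operatorname*{aff}T$ is valid and even shorter than the paper's direct verification: any monotone $M\supset T$ satisfies $M\subset M^{+}\subset T^{+}$ by (\ref{m1}) and the antitonicity of $A\mapsto A^{+}$. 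The genuine gap is the step you yourself flag as the main obstacle: monotonicity of $\operatorname*{aff}T$ is never proved. You only assert that the inequality $c(z-w)\geq0$ ``is preserved under passing to affine combinations'' because $z_{1}-z_{2}$ re-expresses as a combination of differences lying in the double-cone generated by $T-T$, ``on which the sign of $c$ is controlled.'' That inference pattern is false: knowing $c\geq0$ on a family of vectors says nothing about $c$ of their sums, since expanding $c$ of a sum by bilinearity produces uncontrolled cross terms. The paper's own Remark \ref{affmondc} is a concrete counterexample to exactly this reasoning: the double-cone $D=\mathbb{R}z_{1}\cup\mathbb{R}z_{2}\cup\mathbb{R}z_{3}$ there is monotone, hence $c\geq0$ on $D-D$ and on $\mathbb{R}(D-D)$, yet $z:=z_{1}-2z_{2}+z_{3}=(z_{1}-z_{2})+(z_{3}-z_{2})\in\operatorname*{aff}D$ has $c(z)=-1<0$; since $0\in D$, this shows $\operatorname*{aff}D$ is not monotone. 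So any argument that, like your sketch, uses only monotonicity of $T$, bilinearity of $c$, and coefficients summing to $1$ must fail; convexity of $T$ has to enter the propagation step itself, and your outline never says how.

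The missing idea is the paper's collapse of combinations via convexity. Translate so that $0\in T$ (harmless, since $(T-z)^{+}=T^{+}-z$). Then $\mathbb{R}_{+}T$ is a \emph{convex cone} --- this is precisely where convexity of $T$ is used, via $\alpha u+\beta v=(\alpha+\beta)\big(\tfrac{\alpha}{\alpha+\beta}u+\tfrac{\beta}{\alpha+\beta}v\big)$ --- so $\mathbb{R}_{+}T+\mathbb{R}_{+}T\subset\mathbb{R}_{+}T$ and hence $\operatorname*{aff}T=\mathbb{R}_{+}T-\mathbb{R}_{+}T$. Consequently the difference of any two points of $\operatorname*{aff}T$ is not a long combination but a single expression $\alpha z-\alpha^{\prime}z^{\prime}$ with $z,z^{\prime}\in T$ and $\alpha,\alpha^{\prime}\geq0$, and one concludes by the scaling identity
\[
c(\alpha z-\alpha^{\prime}z^{\prime})=(\alpha+\alpha^{\prime})^{2}\,c\Big(\tfrac{\alpha}{\alpha+\alpha^{\prime}}z-\tfrac{\alpha^{\prime}}{\alpha+\alpha^{\prime}}z^{\prime}\Big)\geq0,
\]
the points $\tfrac{\alpha}{\alpha+\alpha^{\prime}}z$ and $\tfrac{\alpha^{\prime}}{\alpha+\alpha^{\prime}}z^{\prime}$ lying in $T$ as convex combinations with $0$, so that the monotonicity of $T$ applies. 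With this step supplied, the rest of your outline (the inclusion $\operatorname*{aff}T\subset T^{+}$ and the maximality conclusion $T=\operatorname*{aff}T$) goes through exactly as you wrote it.
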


\begin{proof} (i) $\Rightarrow$ (ii) Since $c_{T}$ is convex so
is $\operatorname*{dom}c_{T}=T$, while $c_{T}\ge c$ and $T=[c_{T}=c]$
show that $T$ is monotone.

(ii) $\Rightarrow$ (i) For all $\lambda\in[0,1]$ and $z,z^{\prime}\in T=\operatorname*{dom}c_{T}$
we have $\lambda z+(1-\lambda)z^{\prime}\in T$ since $T$ is convex
and\[
c_{T}(\lambda z+(1-\lambda)z^{\prime})-\lambda c_{T}(z)-(1-\lambda)c_{T}(z^{\prime})=-\lambda(1-\lambda)c(z-z^{\prime})\le0,\]
 because $T$ is monotone. Hence $c_{T}$ is convex.

Assume that $T$ is nonempty, convex and monotone. Because $(T-z)^{+}=T^{+}-z$
for every $z\in Z$, we may (and do) assume that $0\in T$. In this
case $\operatorname*{aff}T=\mathbb{R}_{+}T-\mathbb{R}_{+}T$ since
$\mathbb{R}_{+}T-\mathbb{R}_{+}T$ is linear.

Let $z,z^{\prime}\in T$, $\alpha,\alpha^{\prime}\in\mathbb{R}_{+}$.
For $\alpha+\alpha^{\prime}\neq0$ we have $\frac{\alpha}{\alpha+\alpha^{\prime}}z,\frac{\alpha^{\prime}}{\alpha+\alpha^{\prime}}z^{\prime}\in T$
because $T$ is convex and $0\in T$. Since $T$ is monotone we get\[
c(\alpha z-\alpha^{\prime}z^{\prime})=(\alpha+\alpha^{\prime})^{2}c\big(\frac{\alpha}{\alpha+\alpha^{\prime}}z-\frac{\alpha^{\prime}}{\alpha+\alpha^{\prime}}z^{\prime}\big)\ge0,\]
 and this implies that $\operatorname*{aff}T$ is monotone.

To prove that $\operatorname*{aff}T\subset T^{+}$ first note that
$[1,\infty)T^{+}\subset T^{+}$. Indeed for $z\in T^{+}$, $t\ge1$,
and an arbitrary $u\in T$ we have $t^{-1}u\in T$ since $T$ is convex
and $0\in T$. Consequently, the monotonicity of $T$ provides us
with\[
c(tz-u)=t^{2}c(z-t^{-1}u)\ge0,\]
 that is, $tz\in T^{+}$ for all $z\in T^{+}$ and $t\ge1$.

Recall that (see (\ref{m1})) $T$ monotone is equivalent to $T\subset T^{+}$,
from which $[1,\infty)T\subset[1,\infty)T^{+}\subset T^{+}$. Since
by the convexity of $T$, $[0,1]T=T$, we obtain $\mathbb{R}_{+}T\subset T^{+}$.

It is time to show that $\operatorname*{aff}T=\mathbb{R}_{+}T-\mathbb{R}_{+}T\subset T^{+}$.
Indeed, let $\alpha,\alpha^{\prime}\ge0$ and $z,z^{\prime}\in T$.
Again, for arbitrary $u\in T$ we have\[
c(\alpha z-\alpha^{\prime}z^{\prime}-u)=(1+\alpha^{\prime})^{2}c\big(\frac{\alpha^{\prime}}{1+\alpha^{\prime}}z^{\prime}+\frac{1}{1+\alpha^{\prime}}u-\frac{\alpha}{1+\alpha^{\prime}}z\big)\ge0,\]
 because $\frac{\alpha^{\prime}}{1+\alpha^{\prime}}z^{\prime}+\frac{1}{1+\alpha^{\prime}}u\in T$
and $\frac{\alpha}{1+\alpha^{\prime}}z\in\mathbb{R}_{+}T\subset T^{+}$,
hence $\alpha z-\alpha^{\prime}z^{\prime}\in T^{+}$.

If, in addition, $T$ is maximal monotone then either from $\operatorname*{aff}T$
monotone or from $T=T^{+}=\operatorname*{aff}T$ we get that $T$
is affine. \end{proof}

\begin{remark} For $T$ monotone and convex the sets $\operatorname*{aff}T$
and $T^{+}$ can be very different. For example, take $T:=\{(0,0)\}\subset\mathbb{R}\times\mathbb{R}$.
Then $\operatorname*{aff}T=\{(0,0)\}$ while $T^{+}$ covers quadrants
I and III.

Also, we cannot expect $T^{+}$ to be convex or a cone. Take $T:=\{(x,0)\mid0\le x\le1\}\subset\mathbb{R}\times\mathbb{R}$.
Then $T^{+}=\{(x,y)\mid x\le0,y\le0\}\cup T\cup\{(x,y)\mid x\ge1,y\ge0\}$
is neither a cone nor convex since for example $(1,1),(0,0)\in T^{+}$
and $(1/2,1/2)\not\in T^{+}$.

However it is easily checked that $T^{+}$ is a (double-)cone whenever
$T$ is a (double-)cone. \end{remark}

It is clear that every maximal monotone operator is dual-representable
with its Fitzpatrick or Penot functions as (d-)representatives. The
converse of the previous fact is currently an open problem. In reflexive
spaces dual-representable operators are maximal monotone (see e.g.\ \cite[Th.\ 3.1]{Burachik/Svaiter:03}).
Under the uniqueness property dual-representable operators become
maximal even in locally convex spaces.

\begin{theorem} \label{dr-unic} The operator $T:X\rightrightarrows X^{*}$
is maximal monotone iff $T$ is dual-representable and unique. \end{theorem}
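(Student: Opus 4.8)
The plan is to split the equivalence, the implication ``maximal $\Rightarrow$ dual-representable and unique'' being immediate and the converse carrying all the difficulty. For the easy direction: if $T\in\mathfrak{M}(X)$, then by Theorem \ref{caracter-max}(ii) $T$ is representable and NI, so $\varphi_T\in\mathcal{D}_T$ and $T$ is dual-representable; moreover a maximal monotone operator admits no proper monotone extension, so its only maximal monotone extension is $T$ itself, whence $T$ is unique by definition (consistently with $T^+=T$ in \eqref{m2}).

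For the converse, assume $T$ is dual-representable and unique, and fix $f\in\mathcal{D}_T$, so that $f\in\Gamma_{\tau\times w^{\ast}}(Z)$, $f\ge c$, $f^{\square}\ge c$, and by \eqref{r6} $T=[f=c]=[f^{\square}=c]$. Since $f\in\mathcal{R}$, $T$ is representable, hence $T\in\mathcal{M}(X)$ and $T=[\psi_T=c]$ by Theorem \ref{caracter-max}(i), while \eqref{r7} gives $\varphi_T\le f\le\psi_T$ and $\varphi_T\le f^{\square}\le\psi_T$. Uniqueness and Proposition \ref{caracter-unic} yield that $M:=T^{+}=T^{++}$ is maximal monotone and is the unique maximal monotone extension of $T$; in particular $\varphi_M\ge c$, $M=[\varphi_M=c]$, and from $T\subset M$ one gets $\varphi_T\le\varphi_M\le\psi_M\le\psi_T$. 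By Theorem \ref{caracter-max}(ii) it now suffices to prove that $T$ is NI, for then $T$ is representable and NI, hence maximal monotone (and automatically $T=T^{+}=M$).

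To organize the NI condition $\varphi_T\ge c$, I would localize it on $M$. Off $M=[\varphi_T\le c]$ one has $\varphi_T>c$ by definition, so NI can only fail on $M$; and for $z\in M$ the monotone relation $c(z-w)\ge0$ for all $w\in T$ gives $\varphi_T(z)\le c(z)$. Thus $T$ is NI if and only if $\varphi_T=c$ on $M$. Since $T=[f^{\square}=c]$ with $f^{\square}\ge c$, the cleanest sufficient condition is the single inequality $f^{\square}\le c$ on $M$: indeed this forces $f^{\square}=c$ on $M$, i.e.\ $M\subset[f^{\square}=c]=T$, whence $T=M\in\mathfrak{M}(X)$. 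This reduction is harmless, but I should note that $f^{\square}\le c$ on $M$, $\varphi_M\le f$, and $f^{\square}\le\psi_M$ are mutually equivalent (the first two by Fenchel conjugation together with $\varphi_M^{\square}=\psi_M$, the third by restricting to $M=[\psi_M=c]$), so the elementary estimates above, which only ever produce the reverse inequalities, cannot by themselves break this small circle.

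The heart of the matter, and the step I expect to be hardest, is therefore to establish $f^{\square}\le c$ on $M$, and here the maximality of $M$ must be used essentially. I would argue by contradiction via separation: if $f^{\square}(z_0)>c(z_0)$ for some $z_0\in M$, then separating the point $(z_0,c(z_0))$ from the $\tau\times w^{\ast}$-closed convex set $\operatorname{epi}f^{\square}$ in $Z\times\mathbb{R}$ (recall $(Z,\tau\times w^{\ast})^{\ast}=Z$) produces $\zeta\in Z$ with $f(\zeta)<\zeta\cdot z_0-c(z_0)$; since $f\ge c$ this reads $c(z_0-\zeta)<0$, so $z_0$ and $\zeta$ fail to be monotonically related, which already forces $\zeta\notin T$ and indeed $\zeta\notin M$ (as $z_0\in M$ and $M$ is monotone). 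The whole difficulty is then to rule out such a $\zeta$, that is, to confine the affine minorant it produces to the unique maximal extension $M$; this is precisely the point where reflexivity (through attainment of conjugates and surjectivity) is used in the Burachik--Svaiter argument in the reflexive case, and where I expect uniqueness, in the form $M=T^{++}=\bigcap\{M'\in\mathfrak{M}(X)\mid T\subset M'\}$ from \eqref{m3}, to substitute for it, the dual-representability hypothesis $f^{\square}\ge c$ being exactly what guarantees $[f^{\square}=c]=T$ and drives the separation. Once $f^{\square}\le c$ on $M$ is secured, the reduction above gives $T=M=T^{+}\in\mathfrak{M}(X)$, completing the proof.
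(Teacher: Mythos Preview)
Your reduction is sound and you are one elementary idea away from the paper's proof, but the step you flag as ``the whole difficulty'' and leave open is in fact the entire content of the converse; as written the argument is genuinely incomplete. Two concrete adjustments close it.

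First, do not aim for $f^{\square}\le c$ on all of $M$; argue for NI directly and assume by contradiction that there exists $z_0\in[\varphi_T<c]$. Since $T$ is representable, ``$T$ not NI'' is equivalent to ``$T\ne M$'', so nothing is lost; but this choice of $z_0$ gives the \emph{strict} sign $(\varphi_T-c)(z_0)<0$ that the argument below needs. With your choice of an arbitrary $z_0\in M\setminus T$ one may have $\varphi_T(z_0)=c(z_0)$, and then the step does not go through.

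Second, no separation theorem and no reflexivity substitute are needed: the missing step is an intermediate value argument on a segment, using only that $T^+$ is monotone (this is exactly where uniqueness enters) and that $\varphi_T$ is convex. One shows $\operatorname{dom} f\subset\{z_0\}^+$. Indeed, suppose some $\zeta\in\operatorname{dom} f\subset\operatorname{dom}\varphi_T$ has $c(\zeta-z_0)<0$; then $\zeta\notin T^+$ (since $z_0\in T^+$ and $T^+$ is monotone), i.e.\ $(\varphi_T-c)(\zeta)>0$, while $(\varphi_T-c)(z_0)<0$; the restriction of $\varphi_T-c$ to the segment $[z_0,\zeta]\subset\operatorname{dom}\varphi_T$ is finite and continuous, so there is $t\in(0,1)$ with $w:=(1-t)z_0+t\zeta\in[\varphi_T=c]\subset T^+$, yet $c(w-z_0)=t^{2}c(\zeta-z_0)<0$, contradicting the monotonicity of $T^+$. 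Hence $\operatorname{dom} f\subset\{z_0\}^+$, which gives $f(z)\ge c(z)\ge c(z)-c(z-z_0)=z\cdot z_0-c(z_0)$ for every $z\in Z$, i.e.\ $f^{\square}(z_0)\le c(z_0)$; since $f^{\square}\ge c$ this places $z_0\in[f^{\square}=c]=T\subset[\varphi_T=c]$, the desired contradiction.

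This is precisely the paper's argument. Your ``separation'' producing $\zeta$ is just the definition of $f^{\square}(z_0)>c(z_0)$ and is nothing but the contrapositive of the easy implication $\operatorname{dom} f\subset\{z_0\}^+\Rightarrow f^{\square}(z_0)\le c(z_0)$; the substance lies entirely in the intermediate value step, which you did not supply.
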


\begin{proof} While the direct implication is clear (see e.g.\ Theorem
\ref{caracter-max} (ii)), for the converse let $T$ be unique and
dual-representable with a d-representative $h$. From (\ref{r7})
we have that $\varphi_{T}\le h$. For the maximality of $T$ it suffices
to prove that $T$ is NI or $T^{+}=[\varphi_{T}\le c]\subset[\varphi_{T}=c]$.

Assume by contradiction that there exists $z_{0}\in T^{+}\setminus[\varphi_{T}=c]=[\varphi_{T}<c]$.
Then \begin{equation}
\operatorname*{dom}h\subset\{z_{0}\}^{+}=\{z\in Z\mid c(z-z_{0})\ge0\}.\label{m4}\end{equation}
 Indeed, if there is a $z\in\operatorname*{dom}h$ $(\subset\operatorname*{dom}\varphi_{T})$
such that $c(z-z_{0})<0$ then $z\not\in T^{+}$ since $T^{+}$ is
monotone and $z_{0}\in T^{+}$. This implies that $z\in[\varphi_{T}>c]\cap\operatorname*{dom}\varphi_{T}$.
From the continuity of $\varphi_{T}-c$ on the segment $[z,z_{0}]:=\{tz+(1-t)z_{0}\mid0\le t\le1\}$
there exists $t\in(0,1)$ such that $w:=tz+(1-t)z_{0}\in[\varphi_{T}=c]\subset T^{+}$.
It follows that $c(w-z_{0})=t^{2}c(z-z_{0})<0$. This contradicts
the monotonicity of $T^{+}$. Therefore (\ref{m4}) holds.

The inclusion in (\ref{m4}) yields that\begin{equation}
h(z)\ge c(z)-c(z-z_{0})=z\cdot z_{0}-c(z_{0})\quad\forall z\in Z,\label{m5}\end{equation}
 or $c(z_{0})\ge z\cdot z_{0}-h(z)$, for every $z\in Z$. After passing
to supremum over $z\in Z$ we find that $z_{0}\in[h^{\square}=c]=T\subset[\varphi_{T}=c]$.
This is in contradiction with the choice of $z_{0}$. Hence $T$ is
NI and consequently maximal monotone. \end{proof}

\begin{corollary} \label{unic dr ext} The monotone operator $T:X\rightrightarrows X^{*}$
admits a unique dual-representable extension iff $T$ admits a unique
maximal monotone extension. \end{corollary}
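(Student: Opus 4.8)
The plan is to prove the two implications separately, relying on three ingredients already established: that every maximal monotone operator is dual-representable (with $\varphi_{T}$ or $\psi_{T}$ as a d-representative); that every monotone operator possesses at least one maximal monotone extension, obtained via Zorn's lemma from the fact that the union of a chain of monotone subsets of $Z$ is again monotone; and Theorem \ref{dr-unic}, which upgrades a dual-representable \emph{and} unique operator to a maximal monotone one. The conceptual hinge is the observation that uniqueness propagates upward along monotone extensions: if $T\subset S$ with both monotone and $T$ unique, then $S$ is unique as well, because every maximal monotone extension of $S$ is in particular a maximal monotone extension of $T$ and hence must coincide with the single one that $T$ admits.

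First I would treat the implication that a unique dual-representable extension forces a unique maximal monotone extension. Since $T$ is monotone, Zorn's lemma furnishes at least one maximal monotone extension $M$ of $T$, and $M$, being maximal monotone, is dual-representable; thus $M$ is a dual-representable extension of $T$. If, by hypothesis, $T$ admits only one dual-representable extension, then any two maximal monotone extensions of $T$, both being dual-representable extensions, must coincide with it, so the maximal monotone extension is unique.

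For the converse I would start from the hypothesis that $T$ admits a unique maximal monotone extension; by Proposition \ref{caracter-unic} this is exactly the uniqueness of $T$, and the extension in question is $T^{+}=T^{++}$. Existence of a dual-representable extension is automatic, since $T^{+}$ is maximal monotone and therefore dual-representable. To establish uniqueness, I would take an arbitrary dual-representable extension $S$ of $T$, say $S=[h=c]$ with $h\in\mathcal{D}$, so that $S$ is monotone and contains $T$. The key step is to show that $S$ inherits uniqueness: every maximal monotone extension of $S$ contains $T$ and is therefore a maximal monotone extension of $T$, hence equals $T^{+}$; consequently $S$ has exactly one maximal monotone extension and is unique. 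Being both dual-representable and unique, $S$ is maximal monotone by Theorem \ref{dr-unic}, and as a maximal monotone extension of $T$ it must equal $T^{+}$. Thus every dual-representable extension of $T$ coincides with $T^{+}$, which proves uniqueness.

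The step I expect to carry the real weight is the invocation of Theorem \ref{dr-unic} in the converse direction. Dual-representability alone is not known to imply maximal monotonicity (this is flagged as an open problem in the preceding discussion), so the argument genuinely needs the inherited uniqueness of $S$ in order to promote it to a maximal monotone operator. The remaining ingredients---existence of maximal monotone extensions, dual-representability of maximal monotone operators, and the propagation of uniqueness along inclusions---are routine, so I anticipate no further obstacle.
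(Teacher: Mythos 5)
Your proof is correct and follows essentially the same route as the paper: the forward direction via the fact that every maximal monotone operator is dual-representable, and the converse by propagating uniqueness from $T$ to an arbitrary dual-representable extension $D=[h=c]$ and then invoking Theorem \ref{dr-unic} to conclude $D$ is maximal monotone, hence equal to the unique maximal monotone extension. The only difference is that you spell out the Zorn's lemma existence step and the uniqueness-inheritance argument, which the paper states tersely.
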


\begin{proof} The direct implication is plain since every maximal
monotone operator is dual-representable. For the converse implication
let $h\in\mathcal{D}$ be such that $T\subset[h=c]=:D$. Since $T$
is unique, so is $D$, hence, according to Theorem \ref{dr-unic},
$D$ is maximal monotone. Therefore $D$ is the unique maximal monotone
and dual-representable extension of $T$. \end{proof}

\begin{remark} \label{non}Note that a unique and non-maximal monotone
operator is not dual-representable while a representable and non-maximal
monotone operator is not of NI type. Therefore a unique representable
non-maximal monotone operator is neither NI nor dual-representable.
\end{remark}

In view of Theorem \ref{dr-unic} a path to a proof of the Rockafellar
Conjecture is provided by the following result.

\begin{corollary} \label{Rockafellar}Let $X,Y$ be Banach spaces,
$A:X\rightarrow Y$ a continuous linear operator, $M\in\mathfrak{M}(X)$
and $N\in\mathfrak{M}(Y)$ with $0\in{}^{ic}(\operatorname*{conv}(\operatorname*{dom}N-A(\operatorname*{dom}M)))$.
Then $M+A^{\top}NA$ is maximal monotone iff $M+A^{\top}NA$ is unique.
Here $A^{\top}:Y^{*}\rightarrow X^{*}$ stands for the adjoint of
$A$. \end{corollary}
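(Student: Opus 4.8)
The plan is to deduce the statement from Theorem~\ref{dr-unic} by showing that, under the stated constraint qualification, the operator $T:=M+A^{\top}NA$ is \emph{automatically} dual-representable. Once this is established, Theorem~\ref{dr-unic} gives that $T$ is maximal monotone iff $T$ is dual-representable \emph{and} unique, i.e.\ iff $T$ is unique, which is exactly the asserted equivalence. Thus the whole problem reduces to producing a $d$--representative of $T$, and the hypotheses on $M$, $N$, $A$ are present precisely to guarantee its existence.

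To build a candidate I would start from the Fitzpatrick functions $\varphi_{M}$ and $\varphi_{N}$, which by Theorem~\ref{caracter-max}(ii) are $d$--representatives of the maximal monotone operators $M$ and $N$, so that $M=[\varphi_{M}=c]$, $N=[\varphi_{N}=c]$, $\varphi_{M},\varphi_{N}\geq c$, and (since $\varphi_{M}^{\square}=\psi_{M}$, $\varphi_{N}^{\square}=\psi_{N}$) also $\varphi_{M}^{\square},\varphi_{N}^{\square}\geq c$ by \eqref{r2}. The natural representative of $T$ is the partial inf-convolution in the dual variable obtained by first representing $A^{\top}NA$ through precomposition and then adding the $M$--part:
\[
h(x,x^{*}):=\inf\{\varphi_{M}(x,x^{*}-A^{\top}y^{*})+\varphi_{N}(Ax,y^{*})\mid y^{*}\in Y^{*}\}.
\]
Convexity of $h$ is immediate from convexity of $\varphi_{M},\varphi_{N}$ and linearity of the constraint, and the adjoint identity $\langle Ax,y^{*}\rangle=\langle x,A^{\top}y^{*}\rangle$ together with $\varphi_{M}\geq c$, $\varphi_{N}\geq c$ yields $h\geq c$. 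The inclusion $T\subseteq[h=c]$ is routine: a point $(x,x^{*})\in T$ provides $u^{*}\in M(x)$ and $y^{*}\in N(Ax)$ with $x^{*}=u^{*}+A^{\top}y^{*}$, whence $\varphi_{M}(x,u^{*})=\langle x,u^{*}\rangle$, $\varphi_{N}(Ax,y^{*})=\langle Ax,y^{*}\rangle$, giving $h(x,x^{*})\leq\langle x,x^{*}\rangle$, and so $h(x,x^{*})=\langle x,x^{*}\rangle$.

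The crux, and the main obstacle, is to verify that the inf-convolution is \emph{exact} and $\tau\times w^{*}$--lsc, that the reverse inclusion $[h=c]\subseteq T$ holds, and that $h^{\square}\geq c$; these three are exactly what is needed for $h\in\mathcal{D}_{T}$. This is where the qualification $0\in{}^{ic}(\operatorname*{conv}(\operatorname*{dom}N-A(\operatorname*{dom}M)))$ is used: it is a Rockafellar/Attouch--Br\'ezis--Z\u{a}linescu interiority condition on the domains in $Y$ which forces a zero duality gap for the Fenchel problem appearing when one computes $h^{\square}$. Indeed, expanding $h^{\square}(a,a^{*})$ and substituting $u^{*}=x^{*}-A^{\top}y^{*}$ leads to a supremum in which the variable $x$ is coupled simultaneously through $\varphi_{M}(x,\cdot)$ and $\varphi_{N}(Ax,\cdot)$; the qualification permits the interchange of infimum and supremum without a gap, computes $h^{\square}$ as the corresponding (exact) convolution of $\varphi_{M}^{\square}=\psi_{M}$ with the $A$--transform of $\varphi_{N}^{\square}=\psi_{N}$, and thereby delivers both $h^{\square}\geq c$ (since $\psi_{M},\psi_{N}\geq c$) and the properness and lower semicontinuity needed for $h\in\mathcal{R}$. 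Exactness also secures $[h=c]\subseteq T$: if $h(x,x^{*})=\langle x,x^{*}\rangle$ with the infimum attained at some $y_{0}^{*}$, then equality in $\varphi_{M}(x,x^{*}-A^{\top}y_{0}^{*})+\varphi_{N}(Ax,y_{0}^{*})\geq\langle x,x^{*}-A^{\top}y_{0}^{*}\rangle+\langle Ax,y_{0}^{*}\rangle$ forces equality in each summand, i.e.\ $x^{*}-A^{\top}y_{0}^{*}\in M(x)$ and $y_{0}^{*}\in N(Ax)$, so $x^{*}\in(M+A^{\top}NA)(x)$.

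Combining these, $h\in\mathcal{D}$ and $T=[h=c]=[h^{\square}=c]$ (the last equality by \eqref{r6}), so $T$ is dual-representable. Invoking Theorem~\ref{dr-unic}, $T$ is maximal monotone iff $T$ is unique, which is the claim. I expect the verification of exactness and lower semicontinuity of the convolution under the ${}^{ic}$--qualification to be the only nontrivial step; everything else is bookkeeping with $\varphi_{M},\varphi_{N}$ and the adjoint identity.
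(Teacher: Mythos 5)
Your proposal is correct and takes essentially the same route as the paper: the paper also reduces the corollary to Theorem \ref{dr-unic} by invoking dual-representability of $M+A^{\top}NA$ under the interiority condition, which it imports directly from \cite[Remark 1]{VZ mmcs 2008} in the form of the d-representative $h^{\square}(x,x^{\ast})=\min\{f^{\square}(x,x^{\ast}-A^{\top}y^{\ast})+g^{\square}(Ax,y^{\ast})\mid y^{\ast}\in Y^{\ast}\}$ with $f\in\mathcal{R}_{M}$, $g\in\mathcal{R}_{N}$. Choosing $f=\psi_{M}$, $g=\psi_{N}$ (so that $f^{\square}=\varphi_{M}$, $g^{\square}=\varphi_{N}$) this is exactly your candidate $h$, realized there as a conjugate function (hence automatically lsc, with exactness of the minimum supplied by the Attouch--Br\'ezis--Z\u{a}linescu duality) --- precisely the technical step you correctly single out as the crux and defer.
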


\begin{proof} As seen in \cite[Remark 1]{VZ mmcs 2008} the mentioned
qualification constraint ensures the dual representability of $M+A^{\top}NA$,
a d-representative for it being given by \[
h^{\square}(x,x^{\ast})=\min\big\{f^{\square}(x,x^{\ast}-A^{\top}y^{\ast})+g^{\square}(Ax,y^{\ast})\mid y^{\ast}\in Y^{\ast}\big\},\quad\forall(x,x^{\ast})\in X\times X^{\ast},\]
 where $f\in\mathcal{R}_{M}$, $g\in\mathcal{R}_{N}$. The stated
equivalence follows directly from Theorem \ref{dr-unic}. \end{proof}


\begin{proposition} \label{unic not-NI}Let $T$ be a monotone operator.
Then $T$ is unique and not of NI type iff $\operatorname*{dom}\varphi_{T}$
is monotone and $[\varphi_{T}=c]\subsetneq\operatorname*{dom}\varphi_{T}$.
In this case $[\psi_{T}=c]$ is unique representable and neither NI
nor dual representable and $\operatorname*{dom}\varphi_{T}$ is affine
and the unique maximal monotone extension of both $T$ and $[\psi_{T}=c]$.
\end{proposition}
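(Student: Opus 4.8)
The plan is to route everything through the set $T^{+}=[\varphi_{T}\le c]$ and a few standing facts: since $T$ is monotone, $T\subset[\varphi_{T}=c]\subset T^{+}\subset\operatorname*{dom}\varphi_{T}$ (by (\ref{r1}), (\ref{r2}), and $\varphi_{T}\le c<\infty$ on $T^{+}$); $T$ fails to be NI exactly when $[\varphi_{T}<c]=T^{+}\setminus[\varphi_{T}=c]\neq\emptyset$; and, by Proposition \ref{caracter-unic}, $T$ is unique exactly when $T^{+}$ is monotone, equivalently maximal monotone. Writing $D:=\operatorname*{dom}\varphi_{T}$, which is convex as the domain of the convex function $\varphi_{T}$, the whole statement will reduce to identifying $D$ with the unique maximal monotone extension $T^{+}$ under either set of hypotheses.

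For the implication ``$\Leftarrow$'' I would argue directly. Assume $D$ is monotone and $[\varphi_{T}=c]\subsetneq D$. Since $T\subset[\varphi_{T}=c]\subset D$, monotonicity of $D$ gives $c(z-w)\ge0$ for every $z\in D$ and every $w\in T$, that is $z\in T^{+}$; hence $D\subset T^{+}$, and as $T^{+}\subset D$ always holds we obtain $D=T^{+}$. Thus $T^{+}$ is monotone and $T$ is unique by Proposition \ref{caracter-unic}, while the strict inclusion $[\varphi_{T}=c]\subsetneq D=T^{+}=[\varphi_{T}\le c]$ forces $[\varphi_{T}<c]\neq\emptyset$, so $T$ is not NI.

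The core is the implication ``$\Rightarrow$'', and I expect this to be the main obstacle: upgrading the monotonicity known on $T^{+}=[\varphi_{T}\le c]$ to all of $\operatorname*{dom}\varphi_{T}$. Assume $T$ is unique, so $M:=T^{+}$ is maximal monotone, and not NI, so I may fix $z_{0}\in[\varphi_{T}<c]\subset M$. Given $z_{1},z_{2}\in D$, the convex set $D$ contains each segment $[z_{0},z_{i}]$, so $t\mapsto\varphi_{T}(z_{0}+t(z_{i}-z_{0}))$ is a finite convex function on $[0,1]$ and therefore satisfies $\limsup_{t\downarrow0}\varphi_{T}(z_{0}+t(z_{i}-z_{0}))\le\varphi_{T}(z_{0})$, whereas $t\mapsto c(z_{0}+t(z_{i}-z_{0}))$ is a quadratic polynomial, hence continuous. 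Because $\varphi_{T}(z_{0})<c(z_{0})$, for all sufficiently small $t>0$ both $z_{0}+t(z_{1}-z_{0})$ and $z_{0}+t(z_{2}-z_{0})$ lie in $[\varphi_{T}<c]\subset M$; the monotonicity of $M$ then yields $t^{2}c(z_{1}-z_{2})=c\big(t(z_{1}-z_{2})\big)\ge0$, whence $c(z_{1}-z_{2})\ge0$. This proves $D$ is monotone, and since $z_{0}\in D\setminus[\varphi_{T}=c]$ we also get $[\varphi_{T}=c]\subsetneq D$. The whole point is that the \emph{strict} inequality $\varphi_{T}(z_{0})<c(z_{0})$, via convexity, drags the rescaled points into the monotone set $[\varphi_{T}<c]$ for small $t$.

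Finally, for the ``In this case'' conclusions: in both directions $\operatorname*{dom}\varphi_{T}$ is monotone and contains the maximal monotone set $T^{+}$, so it must equal $T^{+}$; being convex and maximal monotone, it is affine by Proposition \ref{aff-conv}, and by Proposition \ref{caracter-unic} it is the unique maximal monotone extension of $T$. For $R:=[\psi_{T}=c]$, Theorem \ref{caracter-max}(i) gives that $R$ is representable, and Proposition \ref{caracter-NI}(i) gives $R^{+}=T^{+}=\operatorname*{dom}\varphi_{T}$, so $R$ is unique with the same unique maximal monotone extension; as $T$, and hence $R$, is not NI, $R$ is not maximal, so by Remark \ref{non} it is neither NI nor dual-representable.
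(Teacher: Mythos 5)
Your proof is correct and takes essentially the same route as the paper's: the same segment argument from a witness $z_{0}\in[\varphi_{T}<c]$ pushing arbitrary points of $\operatorname*{dom}\varphi_{T}$ into the monotone set $T^{+}=[\varphi_{T}\leq c]$ (whose monotonicity comes from uniqueness via Proposition \ref{caracter-unic}), the same identification $\operatorname*{dom}\varphi_{T}=T^{+}$ for the converse, and the same appeals to Propositions \ref{aff-conv}, \ref{caracter-NI} and Remark \ref{non} for the final assertions. Your only refinement is to use the convexity estimate $\limsup_{t\downarrow0}\varphi_{T}(z_{t})\leq\varphi_{T}(z_{0})$ where the paper asserts continuity of $\varphi_{T}-c$ along the closed segment (which would also invoke lower semicontinuity of $\varphi_{T}$); your weaker inequality is indeed all the argument needs.
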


\begin{proof} For the direct implication, since $T$ is not of NI
type the set $[\varphi_{T}<c]$ is non-empty; hence $[\varphi_{T}=c]\neq\operatorname*{dom}\varphi_{T}$.
Let $z_{0}\in[\varphi_{T}<c]$, $z\in\operatorname*{dom}\varphi_{T}$
and $z_{t}=(1-t)z_{0}+tz$ for $0\le t\le1$. The function $[0,1]\ni t\rightarrow(\varphi_{T}-c)(z_{t})$
is continuous and $\lim_{t\downarrow0}(\varphi_{T}-c)(z_{t})=(\varphi_{T}-c)(z_{0})<0$.
This shows that for every $z\in\operatorname*{dom}\varphi_{T}$ there
is $\delta\in(0,1]$ such that $(1-t)z_{0}+tz\in[\varphi_{T}\le c]$
for every $t\in[0,\delta]$.

Therefore, for $z_{1},z_{2}\in\operatorname*{dom}\varphi_{T}$ there
is $\lambda\in(0,1)$ such that $(1-\lambda)z_{0}+\lambda z_{1},(1-\lambda)z_{0}+\lambda z_{2}\in[\varphi_{T}\le c]$.
This yields that $c(z_{1}-z_{2})\ge0$, since $T$ is unique or equivalently
$T^{+}=[\varphi_{T}\le c]$ is (maximal) monotone. We proved that
$\operatorname*{dom}\varphi_{T}$ is monotone. But $T^{+}\subset\operatorname*{dom}\varphi_{T}$
and $T^{+}$ is maximal monotone. Hence $T^{+}=\operatorname*{dom}\varphi_{T}$
is maximal monotone with a convex graph thus affine (see e.g.\ \cite[Theorem 4.2]{Bauschke Wang Yao 2008}
or Proposition \ref{aff-conv}).

Conversely, whenever $\operatorname*{dom}\varphi_{T}$ is monotone
we get, as above, that $T^{+}=\operatorname*{dom}\varphi_{T}$ is
the unique maximal monotone extension of $T$ and $\varphi_{T}(z)\le c(z)$
for every $z\in\operatorname*{dom}\varphi_{T}$. Together with $[\varphi_{T}=c]\subsetneq\operatorname*{dom}\varphi_{T}$
this shows that $[\varphi_{T}<c]\neq\emptyset$, i.e., $T$ is not
NI.

The remaining conclusions follow from Remark \ref{non}. \end{proof}

\begin{corollary} \label{dom fiT}Let $T\subset Z$ be monotone.
Then $\operatorname*{dom}\varphi_{T}$ is monotone iff $\operatorname*{dom}\varphi_{T}$
is maximal monotone. In this case $T$ is unique and $\operatorname*{dom}\varphi_{T}$
is the unique maximal monotone extension of $T$. \end{corollary}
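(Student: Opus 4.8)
The plan is to dispatch the easy implication first and then concentrate on the substantive one. If $\operatorname*{dom}\varphi_{T}$ is maximal monotone it is in particular monotone, so that direction needs no argument. For the converse, and for the additional conclusions, the whole content is already buried in the converse half of the proof of Proposition \ref{unic not-NI}; the task is really to isolate it and strip away the superfluous hypothesis $[\varphi_{T}=c]\subsetneq\operatorname*{dom}\varphi_{T}$.

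Assume then that $\operatorname*{dom}\varphi_{T}$ is monotone. The first step is to record the elementary inclusion $T^{+}=[\varphi_{T}\leq c]\subset\operatorname*{dom}\varphi_{T}$: since $c$ is real-valued on $Z$, any $z$ with $\varphi_{T}(z)\leq c(z)$ satisfies $\varphi_{T}(z)<+\infty$, i.e.\ $z\in\operatorname*{dom}\varphi_{T}$. Because monotonicity is inherited by subsets, $T^{+}$ is then monotone, being a subset of the monotone set $\operatorname*{dom}\varphi_{T}$. The second step is to feed this into Proposition \ref{caracter-unic}: as $T\in\mathcal{M}(X)$ with $T^{+}$ monotone, that proposition gives at once that $T$ is unique, that $T^{+}=T^{++}$ is maximal monotone, and that $T^{+}$ is the unique maximal monotone extension of $T$.

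The final step is a sandwich-plus-maximality argument. We now have $T^{+}\subset\operatorname*{dom}\varphi_{T}$ with $T^{+}$ maximal monotone and $\operatorname*{dom}\varphi_{T}$ monotone, so maximality of $T^{+}$ forces $\operatorname*{dom}\varphi_{T}=T^{+}$. Hence $\operatorname*{dom}\varphi_{T}$ is maximal monotone and coincides with the unique maximal monotone extension $T^{+}$ of $T$, which delivers every asserted conclusion. I do not anticipate a real obstacle here: the only point requiring a moment's care is the inclusion $T^{+}\subset\operatorname*{dom}\varphi_{T}$, which is what lets monotonicity of the larger set transfer down to $T^{+}$; once that is in hand, everything reduces to a direct appeal to Proposition \ref{caracter-unic} and the definition of maximality.
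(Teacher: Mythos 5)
Your proof is correct and follows essentially the same route as the paper: the corollary is precisely the argument embedded in the converse half of the proof of Proposition \ref{unic not-NI}, namely $T^{+}=[\varphi_{T}\leq c]\subset\operatorname*{dom}\varphi_{T}$, hence $T^{+}$ is monotone, Proposition \ref{caracter-unic} gives uniqueness and maximality of $T^{+}$, and maximality forces $T^{+}=\operatorname*{dom}\varphi_{T}$. Your isolation of this argument, free of the hypothesis $[\varphi_{T}=c]\subsetneq\operatorname*{dom}\varphi_{T}$, is exactly what the paper's unstated proof amounts to.
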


\begin{remark} Since every unique non-NI operator has its unique
maximal monotone extension an affine operator it is natural to search
for such an object in the class of linear monotone operators (see
Example \ref{unicrep notNIdr} below). \end{remark}

A monotone operator $T$ is not of NI type iff $T$ is not unique
or $T$ is unique but not NI. This allows us to characterize the NI
operator class.

\begin{proposition} \label{NI} Let $T\subset Z$ be monotone. Then
$T$ is NI iff either $[\varphi_{T}\le c]$ is monotone and $\operatorname*{dom}\varphi_{T}$
is non-monotone or $[\varphi_{T}=c]=\operatorname*{dom}\varphi_{T}$.
\end{proposition}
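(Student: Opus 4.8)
The plan is to reduce the statement to a single clean reformulation of the NI condition. Since $c$ is real-valued one always has the chain of inclusions
\[
[\varphi_T = c] \subseteq [\varphi_T \le c] = T^+ \subseteq \operatorname*{dom}\varphi_T ,
\]
so that $T$ is NI (i.e. $\varphi_T \ge c$, equivalently $[\varphi_T < c] = \emptyset$) if and only if the two ends of the first inclusion coincide, namely $T^+ = [\varphi_T = c]$. I would record this equivalence first, together with the facts that $T^+ = [\varphi_T \le c]$ and that, by Proposition \ref{caracter-unic}, monotonicity of $T^+$ is exactly uniqueness of $T$.

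For the forward implication, suppose $T$ is NI. Then Proposition \ref{caracter-NI}(iii) gives that $T$ is unique and that $T^+ = [\varphi_T = c]$ is its unique maximal monotone extension; in particular $[\varphi_T \le c]$ is monotone. I would then split according to whether $\operatorname*{dom}\varphi_T$ is monotone. If $\operatorname*{dom}\varphi_T$ is non-monotone, the first alternative in the statement holds and we are done. If $\operatorname*{dom}\varphi_T$ is monotone, Corollary \ref{dom fiT} makes $\operatorname*{dom}\varphi_T$ the unique maximal monotone extension of $T$ as well; comparing it with $T^+$, which is also the unique maximal monotone extension, forces $\operatorname*{dom}\varphi_T = T^+ = [\varphi_T = c]$, which is the second alternative.

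For the converse I treat the two alternatives separately. If $[\varphi_T = c] = \operatorname*{dom}\varphi_T$, then the displayed chain collapses, giving $[\varphi_T \le c] = [\varphi_T = c]$, i.e. $[\varphi_T < c] = \emptyset$, which is precisely the NI condition. If instead $[\varphi_T \le c]$ is monotone (so $T$ is unique) while $\operatorname*{dom}\varphi_T$ is non-monotone, I argue by contraposition through Proposition \ref{unic not-NI}: were $T$ not NI, then, being unique, it would satisfy the hypotheses of that proposition, which forces $\operatorname*{dom}\varphi_T$ to be monotone, contradicting the assumed non-monotonicity. Hence $T$ is NI.

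The step I expect to require the most care is the monotone-domain subcase of the forward direction, where one must identify the two a priori distinct maximal monotone objects $T^+$ and $\operatorname*{dom}\varphi_T$. This hinges on simultaneously invoking the uniqueness of the maximal monotone extension granted by Proposition \ref{caracter-NI}(iii) and the maximal monotonicity of $\operatorname*{dom}\varphi_T$ granted by Corollary \ref{dom fiT}; everything else is a direct consequence of the inclusion chain and the bookkeeping between uniqueness, NI, and the three sets $[\varphi_T = c]$, $[\varphi_T \le c]$, and $\operatorname*{dom}\varphi_T$.
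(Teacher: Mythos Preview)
Your proof is correct and follows essentially the same route as the paper: both split the forward direction on whether $\operatorname*{dom}\varphi_T$ is monotone (invoking that NI gives uniqueness and $T^+=[\varphi_T=c]$, and that a monotone $\operatorname*{dom}\varphi_T$ must then coincide with $T^+$), and both handle the converse via Proposition~\ref{unic not-NI}. The only minor deviation is that for the alternative $[\varphi_T=c]=\operatorname*{dom}\varphi_T$ you read off $\varphi_T\ge c$ directly from the collapsed inclusion chain, whereas the paper first notes uniqueness in both cases and then appeals to Proposition~\ref{unic not-NI}; your shortcut is perfectly fine.
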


\begin{proof} If $T$ is NI then $T$ is unique, i.e., $T^{+}=[\varphi_{T}\le c]$
is monotone. If $\operatorname*{dom}\varphi_{T}$ is non-monotone
then we are done while if $\operatorname*{dom}\varphi_{T}$ is monotone
then as previously seen $[\varphi_{T}\le c]=[\varphi_{T}=c]=\operatorname*{dom}\varphi_{T}$.

For the converse implication, notice that in both cases $T$ is unique.
The conclusion follows immediately using Proposition \ref{unic not-NI}.
\end{proof}

\begin{remark}\label{rem5} For a monotone operator $T\subset Z$,
the condition $[\varphi_{T}=c]=\operatorname*{dom}\varphi_{T}$ is
equivalent to $\varphi_{T}=c_{M}$, for some affine maximal monotone
set $M\subset Z$. In this case $M=[\varphi_{T}=c]=\operatorname*{dom}\varphi_{T}$
is the unique maximal monotone extension of $T$. \end{remark}

In terms of its Fitzpatrick and Penot functions a monotone operator
$T$ is maximal monotone iff $T=[\psi_{T}=c]$ and either $[\varphi_{T}\le c]$
is monotone and $\operatorname*{dom}\varphi_{T}$ is non-monotone
or $[\varphi_{T}=c]=\operatorname*{dom}\varphi_{T}$.

\strut

Sections \ref{sec:Skew}, \ref{sec:Linear} below deal with strengthening
these results in the (skew-) linear cases.

\section{\label{sec:Skew}Skew double-cones and skew linear subspaces}

As in the previous section, in this section $X$ denotes a separated
locally convex space if not otherwise explicitly specified. A subset
$A$ of $Z=X\times X^{\ast}$ is called \emph{non-negative} if $c(z)\geq0$,
for every $z\in A$ and \emph{skew} if $c(z)=0$, for every $z\in A$.
Our main goal in this section is to study skew linear subspaces and
skew double-cones of $Z$ via their Fitzpatrick and Penot functions.

An important example of monotone operator (multi\-function) is provided
by non-negative linear subspaces $L\subset Z=X\times X^{\ast}$, that
is, $L$ is a linear space with $c(z)\geq0$ for every $z\in L$ (one
obtains immediately that $L$ is monotone) and its particular case
formed by skew linear subspaces of $Z$.

For $A\subset Z$ we set \begin{align*}
-A & :=\{(x,x^{\ast})\in Z\mid(x,-x^{\ast})\in A\},\\
A^{\perp} & :=\{z^{\prime}\in Z\mid\langle z,z^{\prime}\rangle=0,\ \forall z\in A\},\end{align*}
 and \begin{equation}
A_{0}:=\{z\in A^{\perp}\mid c(z)=0\}=A^{\perp}\cap\lbrack c=0].\label{def A0}\end{equation}
 It is obvious that $A^{\perp}=(\operatorname*{cl}_{w\times w^{\ast}}(\operatorname*{lin}A))^{\perp}$,
$A^{\perp\perp}=\operatorname*{cl}_{w\times w^{\ast}}(\operatorname*{lin}A)$,
$A^{\perp}$ is a $w\times w^{\ast}-$closed linear subspace of $Z$,
$A_{0}$ is a double-cone, and $\operatorname*{lin}A=\operatorname*{aff}A=\operatorname*{conv}A$
whenever $A$ is a double-cone. Recall that $w$ denotes the weak
topology on $X$ and $w^{\ast}$ denotes the weak-star topology on
$X^{\ast}$. Also, note that $A\subset A^{\perp}$ implies that $A$
is skew.

\begin{proposition} \label{skewdc FP} Let $D\subset Z$ be a skew
double-cone. Then

\emph{(i)} $\varphi_{D}=\iota_{D^{\perp}}$ and $\psi_{D}=\iota_{\operatorname*{cl}_{w\times w^{\ast}}(\operatorname*{lin}D)}$;

\emph{(ii)} $z$ is m.r.t. $D$ iff $z\in D^{\perp}$ and $c(z)\ge0$,
or equivalently\begin{equation}
D^{+}=[\varphi_{D}\leq c]=\{z\in D^{\perp}\mid c(z)\geq0\};\label{41}\end{equation}

\emph{(iii)} $[\psi_{D}=c]=\{z\in\operatorname*{cl}_{w\times w^{\ast}}(\operatorname*{lin}D))\mid c(z)=0\}=(D^{\perp})_{0}$;
\end{proposition}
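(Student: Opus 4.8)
The plan is to prove Proposition \ref{skewdc FP} for a skew double-cone $D$, computing the Fitzpatrick and Penot functions explicitly and then reading off parts (ii) and (iii) as consequences.

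For part (i), I would start from the definitions $\varphi_D = c_D^{\square}$ and $\psi_D = \operatorname*{cl}_{\tau\times w^{\ast}}\operatorname*{conv} c_D$. Since $D$ is a double-cone, the coupling $z\cdot w = \langle x,x'^{\ast}\rangle + \langle x',x^{\ast}\rangle$ on $Z$ means that $\varphi_D(z) = \sup\{z\cdot w - c(w) \mid w\in D\}$. Because $D$ is skew, $c(w)=0$ for $w\in D$, so $\varphi_D(z) = \sup\{z\cdot w \mid w\in D\}$. Now $D$ is a double-cone (hence $\mathbb{R}D = D$), and the supremum of a linear functional over a double-cone is $0$ if that functional annihilates $D$ and $+\infty$ otherwise; this is exactly $\iota_{D^{\perp}}(z)$, giving $\varphi_D = \iota_{D^{\perp}}$. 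For $\psi_D$, since $c_D = c + \iota_D = \iota_D$ on $D$ (using skewness) and $c_D = +\infty$ off $D$, one has $c_D = \iota_D$; because $D$ is a double-cone, $\operatorname*{lin}D = \operatorname*{conv}D = D$ is already convex (as noted in the preamble), so $\operatorname*{conv} c_D = \iota_D$ and its $\tau\times w^{\ast}$-lsc hull is $\iota_{\operatorname*{cl}_{w\times w^{\ast}}(\operatorname*{lin}D)}$. The one point needing care here is the topology: the lsc hull is taken in $\tau\times w^{\ast}$, but the closure claimed is in $w\times w^{\ast}$; I would invoke the fact that for a convex set the $\tau\times w^{\ast}$-closure and the $w\times w^{\ast}$-closure coincide (convex sets have the same closure in compatible topologies), so $\operatorname*{cl}_{\tau\times w^{\ast}}(\operatorname*{lin}D) = \operatorname*{cl}_{w\times w^{\ast}}(\operatorname*{lin}D)$.

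Part (ii) is then immediate from part (i): by definition $D^+ = [\varphi_D \le c]$, and substituting $\varphi_D = \iota_{D^{\perp}}$ gives $[\iota_{D^{\perp}} \le c] = \{z \mid \iota_{D^{\perp}}(z) \le c(z)\}$. For $z\notin D^{\perp}$ the left side is $+\infty$, so the inequality fails; for $z\in D^{\perp}$ it reads $0 \le c(z)$. Hence $D^+ = \{z\in D^{\perp}\mid c(z)\ge 0\}$, which is the claimed equality \eqref{41}, and the equivalent reformulation that $z$ is m.r.t. $D$ iff $z\in D^{\perp}$ and $c(z)\ge 0$.

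Part (iii) likewise follows from the formula for $\psi_D$ in part (i): $[\psi_D = c] = \{z \mid \iota_{\operatorname*{cl}_{w\times w^{\ast}}(\operatorname*{lin}D)}(z) = c(z)\}$. For $z$ outside $\operatorname*{cl}_{w\times w^{\ast}}(\operatorname*{lin}D)$ the left side is $+\infty \neq c(z)$, so such $z$ are excluded; for $z$ inside, the condition becomes $0 = c(z)$. Thus $[\psi_D = c] = \{z\in \operatorname*{cl}_{w\times w^{\ast}}(\operatorname*{lin}D) \mid c(z)=0\}$, and recognizing $\operatorname*{cl}_{w\times w^{\ast}}(\operatorname*{lin}D) = (D^{\perp})^{\perp} = D^{\perp\perp}$ via the duality facts stated just before the proposition, together with the definition \eqref{def A0} of $A_0$, identifies this set as $(D^{\perp})_0$. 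I expect the main obstacle to be the topological subtlety in part (i), namely justifying the switch between the $\tau\times w^{\ast}$ topology (in which $\psi_D$ is defined) and the $w\times w^{\ast}$ topology (in which the closure is stated); everything else reduces to the elementary behavior of linear functionals over double-cones and routine substitution.
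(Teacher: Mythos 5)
Your treatment of $\varphi_D$, and of parts (ii) and (iii), is correct and is essentially the paper's own argument: $\varphi_D(z)=\sup\{z\cdot z'-c(z')\mid z'\in D\}=\sup\{z\cdot z'\mid z'\in D\}=\iota_{D^\perp}(z)$ because $D$ is skew and $\mathbb{R}D=D$, after which (ii) and (iii) are read off by substitution, using the definition of $A_0$ and $D^{\perp\perp}=\operatorname*{cl}_{w\times w^\ast}(\operatorname*{lin}D)$. Your remark on compatible topologies ($\tau\times w^\ast$ versus $w\times w^\ast$ closures of convex sets) is also legitimate, and is exactly what the paper uses implicitly.

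However, your derivation of the formula for $\psi_D$ contains a false step. You assert that ``because $D$ is a double-cone, $\operatorname*{lin}D=\operatorname*{conv}D=D$ is already convex (as noted in the preamble).'' The preamble states only that $\operatorname*{lin}A=\operatorname*{aff}A=\operatorname*{conv}A$ for a double-cone $A$; it does not say these sets equal $A$, and in general they do not, since a double-cone need not be convex. For instance, in $Z=\mathbb{R}\times\mathbb{R}$ the set $D=(\mathbb{R}\times\{0\})\cup(\{0\}\times\mathbb{R})$ is a skew double-cone with $\operatorname*{conv}D=\operatorname*{lin}D=\mathbb{R}^{2}\neq D$. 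In that example $\operatorname*{conv}c_D=\iota_{\mathbb{R}^{2}}\neq\iota_D$, and $\iota_{\operatorname*{cl}D}=\iota_D\neq\iota_{\operatorname*{cl}(\operatorname*{lin}D)}$, so your chain, followed literally, produces the wrong function; you land on the correct formula only because you silently replace $D$ by $\operatorname*{lin}D$ in the last step. The repair is immediate and uses precisely the fact you cite: skewness gives $c_D=\iota_D$, hence $\operatorname*{conv}c_D=\iota_{\operatorname*{conv}D}=\iota_{\operatorname*{lin}D}$ (the convex hull of an indicator is the indicator of the convex hull, and for a double-cone the convex hull is the linear hull), and then $\operatorname*{cl}_{\tau\times w^\ast}\iota_{\operatorname*{lin}D}=\iota_{\operatorname*{cl}_{w\times w^\ast}(\operatorname*{lin}D)}$ by your compatibility remark. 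Alternatively, one can bypass convex hulls entirely: since $\varphi_D=\iota_{D^\perp}$ is proper, biconjugation gives $\psi_D=\varphi_D^{\square}=\iota_{D^\perp}^{\square}=\iota_{(D^\perp)^\perp}=\iota_{\operatorname*{cl}_{w\times w^\ast}(\operatorname*{lin}D)}$. With that one sentence corrected, your proof is sound and structurally the same as the paper's.
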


\begin{proof} (i) Since $D$ is a skew double-cone we have \[
\varphi_{D}(z)=\sup\{z\cdot z^{\prime}-c(z^{\prime})\mid z^{\prime}\in D\}=\sup\{z\cdot z^{\prime}\mid z^{\prime}\in D\}=\iota_{D^{\perp}}.\]

Assertions (ii) and (iii) follow from the expressions of $\varphi_{D}$
and $\psi_{D}$ in (i) and (\ref{def A0}). \end{proof}

\strut

In the next result we provide a characterization of skew monotone
double-cones.

\begin{proposition} \label{skewdc-char} Let $D\subset Z$ be a double-cone.
TFAE: \emph{(a)} $D$ is skew and monotone, \emph{(b)}$~c(z+z^{\prime})=0$
for all $z,z^{\prime}\in D$, \emph{(c)} $D\subset D^{\perp}$, \emph{(d)}
$\operatorname*{lin}D$ is a skew linear space, \emph{(e)} $\operatorname*{cl}_{w\times w^{\ast}}(\operatorname*{lin}D)$
is a skew linear space.

In particular a linear subspace $S\subset Z$ is skew iff $S\subset S^{\perp}$;
moreover $\operatorname*{cl}_{w\times w^{\ast}}S$ is skew, for every
skew linear subspace $S\subset Z$. \end{proposition}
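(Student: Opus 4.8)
The plan is to base everything on two elementary identities for the coupling. Writing $z=(x,x^{\ast})$ and $z'=(x',x'^{\ast})$, the product is symmetric, satisfies $z\cdot z=2c(z)$, and obeys the polarization identity $c(z+z')=c(z)+c(z')+z\cdot z'$, together with $c(\lambda z)=\lambda^{2}c(z)$; all of these come from expanding $\langle\cdot,\cdot\rangle$. With these recorded I would prove the five statements equivalent through the chain (a)$\Leftrightarrow$(b)$\Leftrightarrow$(c)$\Leftrightarrow$(d)$\Leftrightarrow$(e) and then read off the ``in particular'' assertions.

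For (a)$\Leftrightarrow$(b): if $D$ is skew and monotone then for $z,z'\in D$ monotonicity gives $c(z-z')\ge0$, and since $D$ is a double-cone we have $-z'\in D$, whence also $c(z+z')\ge0$; using $c(z)=c(z')=0$ these read $-z\cdot z'\ge0$ and $z\cdot z'\ge0$, so $z\cdot z'=0$ and therefore $c(z+z')=0$. Conversely (b) with $z'=z$ gives $c(z)=\tfrac14c(2z)=0$, so $D$ is skew, and then $c(z-z')=-z\cdot z'=-c(z+z')=0\ge0$, so $D$ is monotone. The equivalence (b)$\Leftrightarrow$(c) uses the same identities: (b) yields skewness and $z\cdot z'=c(z+z')=0$ for all $z,z'\in D$, i.e.\ $D\subset D^{\perp}$, while (c) gives $z\cdot z'=0$, hence $c(z)=\tfrac12 z\cdot z=0$ and $c(z+z')=0$.

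For (c)$\Leftrightarrow$(d) I would use that $D^{\perp}$ is a linear subspace with $D^{\perp}=(\operatorname*{lin}D)^{\perp}$, so $D\subset D^{\perp}$ is equivalent to $\operatorname*{lin}D\subset(\operatorname*{lin}D)^{\perp}$; the diagonal relation $w\cdot w=2c(w)=0$ together with $c(w+w')=w\cdot w'$ for skew elements shows that this last inclusion is exactly skewness of $\operatorname*{lin}D$. The step I expect to be the only real subtlety is (d)$\Rightarrow$(e), the passage to the $w\times w^{\ast}$-closure, since $c$ is not jointly continuous and one cannot simply pass to limits. I would instead argue purely with polarity, using the excerpt's identity $\operatorname*{cl}_{w\times w^{\ast}}(\operatorname*{lin}D)=(\operatorname*{lin}D)^{\perp\perp}$. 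Setting $S:=\operatorname*{lin}D$, skewness of $S$ gives $S\subset S^{\perp}$; applying $\perp$, which reverses inclusions, yields $S^{\perp\perp}\subset S^{\perp}=S^{\perp\perp\perp}=(S^{\perp\perp})^{\perp}$, and the diagonal identity turns $S^{\perp\perp}\subset(S^{\perp\perp})^{\perp}$ into skewness of $S^{\perp\perp}=\operatorname*{cl}_{w\times w^{\ast}}(\operatorname*{lin}D)$. The reverse (e)$\Rightarrow$(d) is immediate because $\operatorname*{lin}D\subset\operatorname*{cl}_{w\times w^{\ast}}(\operatorname*{lin}D)$ and subsets of skew sets are skew.

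The ``in particular'' part then follows by specializing to $D=S$ a linear subspace, which is a double-cone. A skew linear space is automatically monotone, since $z-z'\in S$ forces $c(z-z')=0\ge0$, so for linear $S$ condition (a) reduces to plain skewness; hence (a)$\Leftrightarrow$(c) reads ``$S$ skew $\Leftrightarrow S\subset S^{\perp}$'', and (d)$\Rightarrow$(e) gives that $\operatorname*{cl}_{w\times w^{\ast}}S$ is skew. The whole difficulty is thus concentrated in (d)$\Rightarrow$(e), where the key move is to replace the unavailable continuity argument by the bipolar computation $S\subset S^{\perp}\Rightarrow S^{\perp\perp}\subset S^{\perp\perp\perp}=S^{\perp}$.
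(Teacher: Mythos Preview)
Your proof is correct and actually more elementary than the paper's. The paper does not do a straight chain of equivalences: it takes (e)$\Rightarrow$(d)$\Rightarrow$(b)$\Rightarrow$(a) as obvious, and then closes the loop with (a)$\Rightarrow$(b), (b)$\Leftrightarrow$(c), and (b)$\Rightarrow$(e). For (a)$\Rightarrow$(b) the paper invokes the previously computed Penot function $\psi_{D}=\iota_{L}$ (with $L=\operatorname*{cl}_{w\times w^{\ast}}(\operatorname*{lin}D)$) together with $\psi_{D}\ge c$ to deduce $c\le 0$ on $L$, and then finishes as you do. For (b)$\Rightarrow$(e) the paper repeats that Penot-function argument for both $D$ and $-D$ to get $c\le 0$ on $L$ and on $-L$, hence $c=0$ on $L$. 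By contrast, your (a)$\Rightarrow$(b) uses nothing but monotonicity plus the double-cone symmetry $-z'\in D$, and your (d)$\Rightarrow$(e) replaces the paper's ``sandwich'' with the bipolar computation $S\subset S^{\perp}\Rightarrow S^{\perp\perp}\subset S^{\perp}= (S^{\perp\perp})^{\perp}$, which is both cleaner and avoids any appeal to the Penot function or to the continuity of $c$. The paper's route has the minor advantage of reusing machinery it has just set up; yours has the advantage of being completely self-contained and of making transparent why the closure step works without any topological continuity of $c$.
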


\begin{proof} Set $L:=\operatorname*{cl}_{w\times w^{\ast}}(\operatorname*{lin}D)$;
of course, $L$ is a linear subspace. The implications (e)~$\Rightarrow$
(d)~$\Rightarrow$ (b)~$\Rightarrow$ (a) are obvious.

(a)~$\Rightarrow$ (b) Because $D$ is skew and monotone, by Proposition
\ref{skewdc FP} we have $\iota_{L}=\psi_{D}\geq c$; hence $c(z)\leq0$
for every $z\in L$. Take $z,z^{\prime}\in D\subset L$; it follows
that $z+z^{\prime}\in L$, and so $c(z+z^{\prime})=c(z)+z\cdot z^{\prime}+c(z^{\prime})=z\cdot z^{\prime}\leq0$.
Since $-z\in D$ we get $z\cdot z^{\prime}=0$, and so $c(z+z^{\prime})=0$.

(b)~$\Rightarrow$ (c) In the proof of (a)~$\Rightarrow$ (b) we
obtained that $z\cdot z^{\prime}=0$ for all $z,z^{\prime}\in D$,
and so $D\subset D^{\perp}$.

(c)~$\Rightarrow$ (b) We have that $z\cdot z^{\prime}=0$ for all
$z,z^{\prime}\in D$. Taking $z^{\prime}=z$ we get $c(z)=0$ for
$z\in D$. Then $c(z+z^{\prime})=c(z)+z\cdot z^{\prime}+c(z^{\prime})=z\cdot z^{\prime}=0$
for all $z,z^{\prime}\in D$.

(b)~$\Rightarrow$ (e). Since $D$ is a double-cone, from (b)~$\Rightarrow$
(a) we have that $D$ and $-D$ are skew and monotone. As in the proof
of (a)~$\Rightarrow$ (b) we obtain that $c(z)\leq0$ for all $z\in L$
and $c(z^{\prime})\leq0$ for all $z^{\prime}\in-L$; therefore, $c(z)=0$
for every $z\in L$, that is, L is skew. The proof is complete. \end{proof}

\begin{corollary} \label{skewmondc}Let $D\subset Z$ be a skew monotone
double-cone. Then $L:=\operatorname*{cl}_{w\times w^{\ast}}(\operatorname*{lin}D)$
is a skew $w\times w^{\ast}$--closed linear space, $\psi_{D}=\psi_{L}=\iota_{L}$,
$\varphi_{D}=\varphi_{L}=\iota_{L^{\perp}}$ and $[\psi_{D}=c]=\operatorname*{dom}\psi_{D}=L$.
In particular, $D$ is representable iff $D=L$, that is, $D$ is
linear and $w\times w^{\ast}$--closed. \end{corollary}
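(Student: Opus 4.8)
The plan is to derive every assertion by feeding first the skew double-cone $D$, and then the closed linear space $L$, into the two preceding propositions. First I would invoke Proposition \ref{skewdc-char}: since $D$ is a skew monotone double-cone, the equivalence (a)~$\Leftrightarrow$~(e) gives at once that $L=\operatorname*{cl}_{w\times w^\ast}(\operatorname*{lin}D)$ is a skew linear space, and it is $w\times w^\ast$--closed by construction. This settles the first assertion.

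Next I would compute the two functions by applying Proposition \ref{skewdc FP}(i) directly to $D$, which yields $\psi_D=\iota_L$ and $\varphi_D=\iota_{D^\perp}$. The point I would stress is that $L$ is itself a skew double-cone (being a skew linear space, it is stable under all real scalars and is skew), so Proposition \ref{skewdc FP}(i) may be reapplied with $L$ in place of $D$. Because $L$ is already linear and $w\times w^\ast$--closed, $\operatorname*{cl}_{w\times w^\ast}(\operatorname*{lin}L)=L$, whence $\psi_L=\iota_L$ and $\varphi_L=\iota_{L^\perp}$. Combining this with the identity $D^\perp=(\operatorname*{cl}_{w\times w^\ast}(\operatorname*{lin}D))^\perp=L^\perp$ recorded just before Proposition \ref{skewdc FP}, I obtain $\psi_D=\psi_L=\iota_L$ and $\varphi_D=\varphi_L=\iota_{L^\perp}$.

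For the level set I would read off $\operatorname*{dom}\psi_D=L$ immediately from $\psi_D=\iota_L$. To identify $[\psi_D=c]$, I would use that $L$ is skew, so $c\equiv0=\iota_L$ on $L$, while outside $L$ the function $\iota_L$ equals $+\infty$ whereas $c$ is finite; hence $[\psi_D=c]=L$. (This also matches Proposition \ref{skewdc FP}(iii), since there $(D^\perp)_0=L$ precisely because $L$ is skew.) Finally, for the representability statement I would apply Theorem \ref{caracter-max}(i): $D$ is representable iff $D\in\mathcal{M}(X)$ and $D=[\psi_D=c]$. Since $D$ is monotone by hypothesis and $[\psi_D=c]=L$, this collapses to the condition $D=L$, which is exactly the requirement that $D$ be linear and $w\times w^\ast$--closed (if $D=L$ then $D$ equals a closed linear space, and conversely a closed linear $D$ satisfies $\operatorname*{lin}D=D=\operatorname*{cl}_{w\times w^\ast}D$, so $L=D$).

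There is no genuine obstacle here; the argument is essentially a bookkeeping exercise combining the two propositions. The only step meriting a moment's care is the observation that $L$ itself qualifies as a skew double-cone, so that Proposition \ref{skewdc FP} can legitimately be re-fed with $L$ in the role of $D$, together with the trivial but essential remark that skewness of $L$ forces $c$ to vanish identically on $L$, which is what turns the indicator level set $[\iota_L=c]$ into all of $L$.
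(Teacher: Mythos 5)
Your proposal is correct and follows essentially the same route as the paper's own proof: Proposition \ref{skewdc-char} gives skewness of $L$, Proposition \ref{skewdc FP}(i) applied to both $D$ and $L$ (together with $D^{\perp}=L^{\perp}$) gives the formulas for $\varphi$ and $\psi$, skewness of $L$ identifies $[\psi_{D}=c]=\operatorname*{dom}\psi_{D}=L$, and Theorem \ref{caracter-max}(i) yields the representability criterion. Your explicit remarks that $L$ is itself a skew double-cone (so the proposition can be re-fed with $L$) and that skewness forces $[\iota_{L}=c]=L$ are exactly the steps the paper leaves implicit.
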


\begin{proof} Because $D^{\perp}=L^{\perp}$, using Proposition \ref{skewdc FP}
(i) we obtain that $\psi_{D}=\psi_{L}=\iota_{L}$ and $\varphi_{D}=\varphi_{L}=\iota_{L^{\perp}}$.
Moreover, by Proposition \ref{skewdc-char} we have that $L$ is a
skew linear space. It follows that $L\subset\lbrack\psi_{L}=c]=[\psi_{D}=c]\subset\operatorname*{dom}\psi_{D}=\operatorname*{dom}\psi_{L}=L$.
Hence $L=[\psi_{L}=c]$, and so $L$ is representable. If $D$ is
representable then $D=[\psi_{D}=c]=L$. The conclusion follows. \end{proof}

\strut

An immediate consequence of the previous result is that the affine
hull of a skew monotone double-cone remains skew (and implicitly monotone).
In section \ref{sec:Linear} we will see that there exists a monotone
double-cone whose affine hull is not monotone. Moreover, Proposition
\ref{skewdc-char} and Corollary \ref{skewmondc} show that when dealing
with skew monotone double-cones, we may assume without loss of generality
that they are linear spaces, even $w\times w^{\ast}$--closed linear
spaces. For this reason in the sequel, in this section, we work only
with skew linear spaces.

\begin{proposition} \label{skew NI}Let $S\subset Z$ be a skew linear
space. TFAE: \emph{(a)} $S$ is NI, \emph{(b)}~$\operatorname*{cl}_{w\times w^{\ast}}S$
is NI, \emph{(c)} $\operatorname*{cl}_{w\times w^{\ast}}S$ is maximal
monotone, \emph{(d)} $S$ has a unique representable extension, \emph{(e)}
$-S^{\perp}$ is monotone.

In this case \begin{equation}
\operatorname*{cl}{}_{w\times w^{\ast}}S=[\psi_{S}=c]=[\varphi_{S}\leq c]=[\varphi_{S}=c]=\{z\in S^{\perp}\mid c(z)=0\}=S_{0},\label{42}\end{equation}
 $S$, $-S$, $-\operatorname*{cl}_{w\times w^{\ast}}S$ are unique
in $Z$, $\operatorname*{cl}_{w\times w^{\ast}}S$ is the unique maximal
monotone extension and the unique representable extension of $S$
in $Z$, and $-S^{\perp}$ is the unique maximal monotone extension,
as well as the unique dual-representable extension of $-S$ and of
$-\operatorname*{cl}_{w\times w^{\ast}}S$ in $Z$. \end{proposition}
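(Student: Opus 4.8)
The plan is to reduce the five equivalences to the general machinery of Propositions \ref{caracter-NI} and \ref{caracter-unic}, using the explicit formulas for $\varphi_S$ and $\psi_S$ supplied by Proposition \ref{skewdc FP} and Corollary \ref{skewmondc}, and then to treat the statements about $-S$ and $-\operatorname*{cl}_{w\times w^*}S$ by a direct sign-flip computation of the monotone polar. Throughout I would write $L:=\operatorname*{cl}_{w\times w^*}S$; by Corollary \ref{skewmondc}, $L$ is a skew $w\times w^*$-closed linear space with $\psi_S=\iota_L$, $\varphi_S=\iota_{S^\perp}$ and $[\psi_S=c]=L$. I would also record the elementary identities $c(x,-x^*)=-c(x,x^*)$ and $(-A)^\perp=-(A^\perp)$ (the latter because the pairing satisfies $(x,-x^*)\cdot z'=-\,(x,x^*)\cdot(-z')$), and observe that $-S$ and $-L$ are again skew linear spaces, hence monotone.

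For the equivalences I would apply Proposition \ref{caracter-NI}(ii) to $T=S\in\mathcal M(X)$: since $[\psi_S=c]=L$, it yields at once that $S$ is NI $\Leftrightarrow$ $L$ is NI $\Leftrightarrow$ $L$ is maximal monotone $\Leftrightarrow$ $S$ has a unique representable extension, i.e.\ (a)$\Leftrightarrow$(b)$\Leftrightarrow$(c)$\Leftrightarrow$(d). For (a)$\Leftrightarrow$(e) I would argue directly from $\varphi_S=\iota_{S^\perp}$: $S$ is NI iff $\iota_{S^\perp}\ge c$, i.e.\ iff $c\le0$ on $S^\perp$; by the identity $c(x,-x^*)=-c(x,x^*)$ this is equivalent to $c\ge0$ on $-S^\perp$, and since a linear space is monotone iff it is non-negative, this is exactly the monotonicity of $-S^\perp$.

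Assuming now that these conditions hold, formula (\ref{42}) is read off from Proposition \ref{caracter-NI}(iii) applied to $S$, which gives $S^+=[\psi_S=c]=[\varphi_S=c]=[\varphi_S\le c]$; the first term equals $L=\operatorname*{cl}_{w\times w^*}S$, while $\varphi_S=\iota_{S^\perp}$ forces $[\varphi_S=c]=\{z\in S^\perp\mid c(z)=0\}=S_0$ by definition (\ref{def A0}). The same Proposition \ref{caracter-NI}(iii) simultaneously delivers that $S$ is unique and that $L$ is both the unique representable and the unique maximal monotone extension of $S$.

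The remaining assertions concern $-S$ and $-L$, and here lies the one genuine subtlety: $-S$ need \emph{not} be NI (indeed $\varphi_{-S}=\iota_{-S^\perp}\ge c$ would force $c=0$ on $-S^\perp$), so the equivalences above cannot be re-applied to $-S$. Instead I would compute the monotone polar directly. By Proposition \ref{skewdc FP}(ii) together with $(-S)^\perp=-S^\perp$, we have $(-S)^+=\{z\in -S^\perp\mid c(z)\ge0\}$; but $c\ge0$ holds on \emph{all} of $-S^\perp$ (the content of (e)), so $(-S)^+=-S^\perp$, and likewise $(-L)^+=-S^\perp$ because $(-L)^\perp=-(L^\perp)=-S^\perp$. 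Since $-S^\perp$ is a non-negative, hence monotone, linear space, Proposition \ref{caracter-unic} (implication (ii)$\Rightarrow$(i),(iii)) shows that $-S$ and $-L$ are unique with common unique maximal monotone extension $(-S)^+=(-L)^+=-S^\perp$. Finally, Corollary \ref{unic dr ext} upgrades ``unique maximal monotone extension'' to ``unique dual-representable extension'', so $-S^\perp$ is also the unique dual-representable extension of both $-S$ and $-L$, which completes the proof.
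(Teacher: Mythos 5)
Your proof is correct and follows essentially the same route as the paper's: the equivalences (a)--(d) via Proposition \ref{caracter-NI}(ii) combined with $[\psi_S=c]=\operatorname*{cl}_{w\times w^*}S$ from Corollary \ref{skewmondc}, (e) via $\varphi_S=\iota_{S^\perp}$, formula (\ref{42}) and the uniqueness statements for $S$ via Proposition \ref{caracter-NI}(iii), and the claims for $-S$ and $-\operatorname*{cl}_{w\times w^*}S$ by computing $(-S)^+=-S^\perp$ and invoking Proposition \ref{caracter-unic} and Corollary \ref{unic dr ext}. Your explicit remark that $-S$ need not be NI (so the polar must be computed directly) is exactly the subtlety the paper's proof handles the same way, just left implicit there.
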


\begin{proof} By Corollary \ref{skewmondc} we have that $L:=\operatorname*{cl}{}_{w\times w^{\ast}}S=[\psi_{S}=c]$.
The equivalence of conditions (a), (b) and (d) follows from Proposition
\ref{caracter-NI} (ii), while the equivalence of (b) and (e) follows
immediately from the relation $\varphi_{S}=\varphi_{L}=\iota_{L^{\perp}}=\iota_{S^{\perp}}$
mentioned in Corollary \ref{skewmondc}. In this case, from Proposition
\ref{caracter-NI} (iii) and Corollary \ref{skewmondc}, we find (\ref{42}).

Assume now that $S$ is NI. By Proposition \ref{caracter-NI} (iii)
$S$ is unique with $L$ its unique maximal monotone extension and
unique representable extension.

Since $S$ is NI we have that $-S^{\perp}=-L^{\perp}$ is monotone.
Because $S$ is skew, so is $-S$, and so $(-S)^{+}=[\varphi_{-S}\leq c]=[\varphi_{-L}\leq c]=[\iota_{-L^{\perp}}\leq c]=-L^{\perp}$
is monotone. According to Proposition \ref{caracter-unic} and Corollary
\ref{unic dr ext}, $-L^{\perp}$ is the unique maximal monotone extension
and the unique dual representable extension of $-S$ and $-L$ in
$Z$. \end{proof}

\strut

Direct consequences of the previous result follow.

\begin{corollary} \label{skew skew} Let $S\subset Z$ be a skew
linear space such that $S^{\perp}$ is skew. Then $S$ is NI, $\operatorname*{cl}{}_{w\times w^{\ast}}S=S^{\perp}$
and $S^{\perp}$ is the unique maximal monotone extension of $S$
in $Z$. \end{corollary}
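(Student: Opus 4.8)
The plan is to verify the negative-infimum criterion (e) of Proposition \ref{skew NI}, namely that $-S^{\perp}$ is monotone, and then to read off the remaining two conclusions from the ``in this case'' part of that same proposition.

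First I would record that, because $S$ is skew, Proposition \ref{skewdc-char} gives $S\subset S^{\perp}$, so $\operatorname*{cl}_{w\times w^{*}}S=S^{\perp\perp}\subset S^{\perp}$. Next I would show that the hypothesis ``$S^{\perp}$ skew'' forces $-S^{\perp}$ to be monotone. Since the reflection $(x,x^{*})\mapsto(x,-x^{*})$ is a linear isomorphism of $Z$ sending $S^{\perp}$ onto $-S^{\perp}$, the set $-S^{\perp}$ is again a linear subspace; moreover this reflection negates the coupling, since $c(x,-x^{*})=-c(x,x^{*})$, so for $(x,x^{*})\in-S^{\perp}$ one has $c(x,x^{*})=-c(x,-x^{*})=0$ because $(x,-x^{*})\in S^{\perp}$ and $S^{\perp}$ is skew. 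Thus $-S^{\perp}$ is a skew linear space, hence monotone (a skew linear space is closed under differences, on which $c$ vanishes). This is condition (e), so Proposition \ref{skew NI} yields that $S$ is NI.

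With $S$ known to be NI, the identity $\operatorname*{cl}_{w\times w^{*}}S=S^{\perp}$ follows immediately from formula (\ref{42}): that formula gives $\operatorname*{cl}_{w\times w^{*}}S=\{z\in S^{\perp}\mid c(z)=0\}$, and since $S^{\perp}$ is skew the condition $c(z)=0$ is automatic, so the set on the right is all of $S^{\perp}$. Finally, the last assertion is exactly the ``in this case'' conclusion of Proposition \ref{skew NI} that $\operatorname*{cl}_{w\times w^{*}}S$ is the unique maximal monotone extension of $S$ in $Z$, rewritten through the equality just established.

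I do not anticipate a genuine obstacle here: the whole corollary is a specialization of Proposition \ref{skew NI} to the case where $S^{\perp}$ happens to be skew, and the only substantive point is the short computation showing that skewness of $S^{\perp}$ transfers to $-S^{\perp}$ (equivalently, that the vanishing of $c$ on $S^{\perp}$ makes criterion (e) trivially satisfied). The mildly delicate step is simply keeping straight that in general $\operatorname*{cl}_{w\times w^{*}}S=S^{\perp\perp}$ may be strictly smaller than $S^{\perp}$, so that the conclusion $\operatorname*{cl}_{w\times w^{*}}S=S^{\perp}$ genuinely uses the extra hypothesis and is not automatic.
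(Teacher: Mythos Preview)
Your proposal is correct and follows exactly the route the paper intends: the corollary is stated as a ``direct consequence'' of Proposition \ref{skew NI}, and your argument spells this out by checking criterion (e) (skewness of $S^{\perp}$ makes $-S^{\perp}$ skew, hence monotone) and then reading off (\ref{42}) with the observation that $S_{0}=S^{\perp}$ when $S^{\perp}$ is skew. The preliminary remark $\operatorname*{cl}_{w\times w^{*}}S=S^{\perp\perp}\subset S^{\perp}$ is not actually needed, but it does no harm.
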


\begin{proposition} \label{skew unic} Let $S\subset Z$ be a skew
linear space and $S_{0}:=[\varphi_{S}=c]$. TFAE: \emph{(i)} $S$
is unique, \emph{(ii)} $S_{0}$ is monotone, \emph{(iii)} $S^{\perp}$
is monotone or $-S^{\perp}$ is monotone, \emph{(iv)} $S_{0}$ is
linear, \emph{(v)}$~S_{0}$ is convex, \emph{(vi)} $\operatorname*{cl}{}_{w\times w^{\ast}}S$
is unique.

In this case $S_{0}=\operatorname*{cl}{}_{w\times w^{\ast}}S$. \end{proposition}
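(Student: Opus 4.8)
The plan is to translate all six conditions into statements about the quadratic form $c$ on the single linear subspace $S^{\perp}$. Since $S$ is a skew linear space, Proposition \ref{skewdc FP} gives $\varphi_{S}=\iota_{S^{\perp}}$, whence
\[
S_{0}=[\varphi_{S}=c]=\{z\in S^{\perp}\mid c(z)=0\}=S^{\perp}\cap[c=0],\qquad S^{+}=[\varphi_{S}\leq c]=\{z\in S^{\perp}\mid c(z)\geq0\}.
\]
In particular $S_{0}$ is a double-cone, and the identity $c(z+w)=c(z)+z\cdot w+c(w)$ specializes to $c(z\pm w)=\pm(z\cdot w)$ for $z,w\in S_{0}$. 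The guiding principle is that every condition will be shown equivalent to the single statement $(\ast)$: $z\cdot w=0$ for all $z,w\in S_{0}$, itself equivalent to sign-definiteness of $c$ on $S^{\perp}$.

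First I would dispatch the block (ii)$\Leftrightarrow$(iv)$\Leftrightarrow$(v). Because $S_{0}$ is a double-cone, for $z,w\in S_{0}$ one has $z+w\in S_{0}\Leftrightarrow c(z+w)=0\Leftrightarrow z\cdot w=0$, so (iv) is exactly $(\ast)$; monotonicity of $S_{0}$ reads $c(z-w)=-(z\cdot w)\geq0$ for all $z,w\in S_{0}$, and replacing $w$ by $-w$ forces $z\cdot w=0$, so (ii) is again $(\ast)$; finally a convex double-cone is linear, giving (v)$\Leftrightarrow$(iv). For the uniqueness links I note that $S$ is monotone (a skew linear space satisfies $S\subset S^{\perp}$) and invoke Proposition \ref{caracter-unic}: $S$ is unique iff $S^{+}$ is monotone. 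Then (i)$\Rightarrow$(ii) is immediate from $S_{0}\subset S^{+}$, and (vi)$\Leftrightarrow$(i) follows at once because $\operatorname*{cl}_{w\times w^{*}}S$ has the same Fitzpatrick function as $S$ (Corollary \ref{skewmondc}), hence the same set $S^{+}$. For (iii)$\Rightarrow$(i): if $S^{\perp}$ is monotone then $c\geq0$ on $S^{\perp}$ and $S^{+}=S^{\perp}$ is monotone; if $-S^{\perp}$ is monotone then $c\leq0$ on $S^{\perp}$ (the coupling changes sign under $(x,x^{*})\mapsto(x,-x^{*})$), so $S^{+}=S_{0}$ and $c(z-w)=-c(z+w)\geq0$, again giving monotonicity of $S^{+}$.

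The crux is (ii)$\Rightarrow$(iii), i.e. that $(\ast)$ forces $c$ to be sign-definite on $S^{\perp}$; I expect this to be the main obstacle, being the only place where the indefinite case must be genuinely excluded rather than manipulated formally. I would argue by contraposition: if $c(u)>0>c(v)$ for some $u,v\in S^{\perp}$, then $t\mapsto c(u+tv)=c(u)+t(u\cdot v)+t^{2}c(v)$ is a downward parabola, positive at $t=0$, hence it has two real roots $t_{1},t_{2}$; the points $z_{i}:=u+t_{i}v$ then lie in $S^{\perp}\cap[c=0]=S_{0}$, while a Vieta computation yields
\[
z_{1}\cdot z_{2}=4c(u)-\frac{(u\cdot v)^{2}}{c(v)}>0,
\]
contradicting $(\ast)$. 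This closes the cycle through (i)--(vi).

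It remains to identify $S_{0}$ with $L:=\operatorname*{cl}_{w\times w^{*}}S=S^{\perp\perp}$ when the conditions hold. The inclusion $L\subset S_{0}$ is automatic: $S\subset S^{\perp}$ gives $S^{\perp\perp}\subset S^{\perp}$, and $L$ is skew by Corollary \ref{skewmondc}, so $L\subset S^{\perp}\cap[c=0]=S_{0}$. For the reverse inclusion I invoke (iii); assuming $c\geq0$ on $S^{\perp}$ (the case $c\leq0$ being symmetric via $-c$), I take $z\in S_{0}$ and observe that for every $w\in S^{\perp}$ the non-negative quadratic $t\mapsto c(z+tw)=t(z\cdot w)+t^{2}c(w)$ vanishes at $t=0$; non-negativity then forces its linear coefficient $z\cdot w$ to be zero. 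Hence $z\cdot w=0$ for all $w\in S^{\perp}$, i.e. $z\in S^{\perp\perp}=L$. This is precisely the fact that the zero-set of a semidefinite form equals its radical, and it yields $S_{0}=\operatorname*{cl}_{w\times w^{*}}S$.
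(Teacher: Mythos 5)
Your proof is correct, and at its crux it coincides with the paper's: the implication (ii) $\Rightarrow$ (iii) is proved in both by contraposition, extracting the two roots $t_{1},t_{2}$ of the downward parabola $t\mapsto c(u+tv)$ with $u,v\in S^{\perp}$, $c(v)<0<c(u)$; the paper simply computes $c(z_{1}-z_{2})=(t_{1}-t_{2})^{2}c(v)<0$, where you take the Vieta route to $z_{1}\cdot z_{2}>0$ --- the same fact, since $c(z_{1}-z_{2})=-z_{1}\cdot z_{2}$ on $S_{0}$. Where you genuinely diverge is in the linking arguments. The paper closes the cycle by leaning on Proposition \ref{skew NI}: the implication (iii) $\Rightarrow$ (i) is read off from its second part applied to $S$ or $-S$, the implication (iii) $\Rightarrow$ (iv) and the closing identity $S_{0}=\operatorname*{cl}_{w\times w^{\ast}}S$ are extracted from (\ref{42}), and (v) $\Rightarrow$ (ii) uses the representability fact that $[\iota_{S_{0}}=c]$ is monotone when $S_{0}$ is convex. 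You avoid Proposition \ref{skew NI} altogether: you reduce (ii), (iv), (v) to the single condition $z\cdot w=0$ on $S_{0}$ (using that a nonempty convex double-cone is linear), you verify directly that $S^{+}$ equals $S^{\perp}$ or $S_{0}$ and is monotone in each branch of (iii), and you obtain $S_{0}=S^{\perp\perp}$ from the elementary fact that the zero set of a semidefinite quadratic form on a linear space is its radical (the vanishing of the linear coefficient of a one-signed quadratic). The paper's route is shorter given its earlier machinery and shows how the uniqueness chart follows from the NI chart; yours is more self-contained, isolates the underlying linear-algebraic content (sign-definiteness of $c$ on $S^{\perp}$), and proves the final identity directly rather than via the NI characterization --- a trade worth being aware of, since your argument would survive even in a presentation where Proposition \ref{skew NI} is not yet available.
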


\begin{proof} By Corollary \ref{skewmondc} we have that $L:=\operatorname*{cl}{}_{w\times w^{\ast}}S$
is skew and $\varphi_{S}=\varphi_{L}$; hence (i) $\Leftrightarrow$
(vi). Moreover, the implication (iv) $\Rightarrow$ (v) is obvious.

(i) $\Rightarrow$ (ii) According to Proposition \ref{caracter-unic}
it is clear that $S$ unique implies that $S_{0}$ $(\subset\lbrack\varphi_{S}\le c])$
is monotone.

(ii) $\Rightarrow$ (iii) For this assume that $S^{\perp}$ and $-S^{\perp}$
are not monotone. Then there exist $z,z^{\prime}\in S^{\perp}$ such
that $c(z^{\prime})<0<c(z)$. For $t\in\mathbb{R}$ and $\eta(t):=c(z+tz^{\prime})=c(z)+t\langle z,z^{\prime}\rangle+t^{2}c(z^{\prime})$
we have that $\eta(0)>0$ and $\lim_{|t|\rightarrow\infty}\eta(t)=-\infty$.
Therefore there exist distinct $t_{1},t_{2}\in\mathbb{R}$ such that
$\eta(t_{1})=\eta(t_{2})=0$, and so $z_{1}:=z+t_{1}z^{\prime},\ z_{2}:=z+t_{2}z^{\prime}\in S_{0}$.
Since $c(z_{1}-z_{2})=(t_{1}-t_{2})^{2}c(z^{\prime})<0$ we see that
$S_{0}$ is not monotone.

The implication (iii) $\Rightarrow$ (i) is straightforward from the
second part of Proposition \ref{skew NI} applied for $S$ or $-S$.

(iii) $\Rightarrow$ (iv) If $-S^{\perp}$ is monotone, using by Proposition
\ref{skew NI} we obtain that $S$ is NI and $S_{0}=L$ (see (\ref{42})).
Similarly, if $S^{\perp}$ is monotone then $-S_{0}=(-S)_{0}=-L$.
Hence in both cases $S_{0}=L$, and so $S_{0}$ is linear.

(v) $\Rightarrow$ (ii) If $S_{0}$ is convex then $\iota_{S_{0}}$
is convex, $\iota_{S_{0}}\geq c$, and $S_{0}=[\iota_{S_{0}}=c]$;
hence $S_{0}$ is monotone. The proof is complete. \end{proof}

\begin{proposition} \label{skew non-unic}Let $S\subset Z$ be a
skew linear space which is not unique. Then $S_{0}=[\varphi_{S}=c]$
is an NI double cone which is neither monotone nor convex, $S^{\perp}$
and $-S^{\perp}$ are not monotone, $\operatorname*{cl}{}_{w\times w^{\ast}}S$
is not unique, $(S_{0})^{\perp}$ is skew, $\operatorname*{cl}{}_{w\times w^{\ast}}\operatorname*{conv}S_{0}=S^{\perp}$,
and \begin{equation}
S^{++}=(\operatorname*{cl}{}_{w\times w^{\ast}}S)^{++}=(S_{0})^{\perp}=\operatorname*{cl}{}_{w\times w^{\ast}}S\subsetneq S_{0}\subsetneq S^{+}.\label{43}\end{equation}

\end{proposition}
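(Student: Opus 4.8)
The plan is to reduce almost everything to the explicit formulas for $\varphi_S$ and $\psi_S$ recorded in Corollary~\ref{skewmondc} and Proposition~\ref{skewdc FP}, combined with the uniqueness dichotomy of Proposition~\ref{skew unic}. Write $L:=\operatorname{cl}_{w\times w^*}S$. First I would recall that $L$ is a skew $w\times w^*$-closed linear space with $\varphi_S=\varphi_L=\iota_{S^\perp}$, that $S^+=[\varphi_S\le c]=\{z\in S^\perp\mid c(z)\ge0\}$ by (\ref{41}), and that $S_0=[\varphi_S=c]=\{z\in S^\perp\mid c(z)=0\}$ is a skew double-cone (it lies in $S^\perp\cap[c=0]$). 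Since $S$ is not unique, Proposition~\ref{skew unic} delivers by negation that $S_0$ is neither monotone nor convex, that $\operatorname{cl}_{w\times w^*}S$ is not unique, and that neither $S^\perp$ nor $-S^\perp$ is monotone. Because a linear subspace $M$ is monotone exactly when $c\ge0$ on $M$, and $c$ changes sign under the reflection $(x,x^*)\mapsto(x,-x^*)$ defining $-S^\perp$, the monotonicity of $-S^\perp$ amounts to $c\le0$ on $S^\perp$; the failure of both therefore produces witnesses $z_-,z_+\in S^\perp$ with $c(z_-)<0<c(z_+)$. These two opposite-sign vectors are the engine of the whole proof.

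The heart of the argument is the identity $\operatorname{lin}S_0=S^\perp$, from which $\operatorname{cl}_{w\times w^*}\operatorname{conv}S_0=S^\perp$ follows at once, since the double-cone $S_0$ satisfies $\operatorname{conv}S_0=\operatorname{lin}S_0$ and $S^\perp$ is $w\times w^*$-closed. The inclusion $\operatorname{lin}S_0\subset S^\perp$ is clear. For the reverse I would run the classical ``the isotropic vectors of an indefinite quadratic form span the space'' argument: given $v\in S^\perp$ with $c(v)>0$, the function $\eta(t):=c(v+tz_-)=c(v)+t\langle v,z_-\rangle+t^2c(z_-)$ has negative leading coefficient and $\eta(0)>0$, hence two distinct real roots $t_1,t_2$; then $v+t_1z_-,\,v+t_2z_-\in S_0$ and $v$ is a linear combination of them, so $v\in\operatorname{lin}S_0$. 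The case $c(v)<0$ is symmetric using $z_+$, and $c(v)=0$ gives $v\in S_0$ directly. Passing to orthogonals yields $(S_0)^\perp=S^{\perp\perp}=L$, which is skew; feeding this into Proposition~\ref{skewdc FP}(i) gives $\varphi_{S_0}=\iota_L\ge c$, i.e.\ $S_0$ is NI. This settles the assertions about $S_0$, $S^\perp$, $(S_0)^\perp$ and the closed-convex-hull formula.

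It remains to prove the chain (\ref{43}). For $S^{++}=(S^+)^+\subset L$ I would take $w\in S^{++}$; since $S_0\subset S^+$ and $S_0$ is a double-cone, $tz\in S^+$ for every $z\in S_0$ and $t\in\mathbb{R}$, so $0\le c(w-tz)=c(w)-t\langle w,z\rangle$ for all $t$ forces $\langle w,z\rangle=0$, whence $w\in(S_0)^\perp=L$. Conversely $S\subset S^{++}$ and $S^{++}=[\varphi_{S^+}\le c]$ is $w\times w^*$-closed (a Fitzpatrick function is lsc and $c$ is continuous), so $L=\operatorname{cl}_{w\times w^*}S\subset S^{++}$; thus $S^{++}=L$. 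Since $\varphi_S=\varphi_L$ gives $S^+=L^+$, we also obtain $(\operatorname{cl}_{w\times w^*}S)^{++}=L^{++}=(S^+)^+=S^{++}=L$. The strict inclusions finish the chain: $L\subset S_0$ always ($L$ being skew), but $L=S_0$ would make $S_0$ linear hence convex, contradicting non-uniqueness, so $L\subsetneq S_0$; and $z_+\in S^+\setminus S_0$ gives $S_0\subsetneq S^+$.

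The only genuinely non-routine step is the spanning identity $\operatorname{lin}S_0=S^\perp$ in the second paragraph; the opposite-sign vectors $z_\pm$ extracted from non-uniqueness are exactly what make the indefinite-quadratic-form argument go through, and once $(S_0)^\perp=L$ is in hand the remaining statements are bookkeeping with the formulas of Proposition~\ref{skewdc FP} together with the scaling trick for $S^{++}$.
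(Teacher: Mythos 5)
Your core argument is correct and takes a genuinely different route from the paper's. The paper proves that $S_{0}$ is NI by contradiction, then obtains skewness of $(S_{0})^{\perp}$ by running the same contradiction argument for $-S$, and finally proves $(S_{0})^{\perp}=\operatorname*{cl}_{w\times w^{\ast}}S$ by a separation theorem; it then identifies $S^{++}$ by computing $\varphi_{S^{+}}=\iota_{L}$ via the sandwich $\iota_{L}=\psi_{S}\geq\varphi_{S^{+}}\geq\varphi_{S_{0}}$ (its (\ref{44})) together with Lemma \ref{ineq A+}. You instead prove the single spanning identity $\operatorname*{lin}S_{0}=S^{\perp}$ directly, using the two opposite-sign witnesses $z_{\pm}$ and the two-real-roots argument, and then everything --- $(S_{0})^{\perp}=S^{\perp\perp}=L$, skewness of $(S_{0})^{\perp}$, $\varphi_{S_{0}}=\iota_{L}\geq c$ (i.e.\ NI), and $\operatorname*{cl}_{w\times w^{\ast}}\operatorname*{conv}S_{0}=S^{\perp}$ --- falls out by taking orthogonals, with no contradiction argument and no separation theorem. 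Your scaling trick ($0\leq c(w-tz)=c(w)-t\,(w\cdot z)$ for $z\in S_{0}$, $t\in\mathbb{R}$) likewise gives $S^{++}\subset(S_{0})^{\perp}$ without computing $\varphi_{S^{+}}$. This is a real streamlining: the quadratic-root mechanism is used once, constructively, instead of twice inside contradictions plus a duality argument.

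There is, however, one wrong justification. You claim $S^{++}=[\varphi_{S^{+}}\leq c]$ is $w\times w^{\ast}$-closed because ``a Fitzpatrick function is lsc and $c$ is continuous''. The coupling $c$ is \emph{not} $w\times w^{\ast}$-continuous (it is only separately continuous): in $\ell_{2}\times\ell_{2}$ one has $(a+e_{n},b+e_{n})\rightarrow(a,b)$ weakly while $c(a+e_{n},b+e_{n})\rightarrow c(a,b)+1$; choosing $c(a,b)=-1/2$ shows that even $[c\geq0]$ fails to be weakly closed, so sets of the form $[f\leq c]$ with $f$ lsc need not be closed. As written, therefore, you have not proved $L\subset S^{++}$. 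The repair is already contained in your own paragraph: $\varphi_{S}=\varphi_{L}$ gives $S^{+}=L^{+}$, hence $S^{++}=L^{++}$, and $L\subset L^{++}$ because $L$ is monotone (the paper notes $T\subset T^{++}\subset T^{+}$ for $T\in\mathcal{M}(X)$, after (\ref{m3})); combined with your correct inclusion $S^{++}\subset(S_{0})^{\perp}=L$ this yields $S^{++}=L^{++}=L$, and the chain (\ref{43}) follows with no appeal to closedness.
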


\begin{proof} Because $\varphi_{S}=\iota_{S^{\perp}}$ and $S_{0}=[\varphi_{S}=c]$,
it is clear that $S_{0}$ is a skew double-cone. Because $S\subset Z$
is a skew linear space which is not unique, by the preceding proposition,
we know that $L:=\operatorname*{cl}{}_{w\times w^{\ast}}S$ is not
unique, $S_{0}$ is not monotone or convex, and $S^{\perp}$, $-S^{\perp}$
are not monotone. Because $S\subset\lbrack\psi_{S}=c]\subset\lbrack\varphi_{S}=c]=S_{0}\subset S^{\perp}$
we get $L=S^{\perp\perp}\subset(S_{0})^{\perp}\subset S^{\perp}$.
Since $S_{0}$ is a skew double-cone we have $\varphi_{S_{0}}=\iota_{(S_{0})^{\perp}}$.

Assume, by contradiction, that $S_{0}$ is not NI. Then, there is
$z\in\lbrack\varphi_{S_{0}}<c]$, that is, $z\in(S_{0})^{\perp}\subset S^{\perp}$
with $c(z)>0$. Clearly $z$ is m.r.t.\ $S_{0}$. Because $S^{\perp}$
is not monotone there is $z^{\prime}\in S^{\perp}$ such that $c(z^{\prime})<0$.
Let $\eta(t):=c(z+tz^{\prime})$ for $t\in\mathbb{R}$. Note that
$\eta(0)=c(z)>0$ and $\lim_{\left\vert t\right\vert \rightarrow\infty}\eta(t)=-\infty$.
Therefore there is $t_{0}\neq0$ such that $\eta(t_{0})=0$, i.e.,
$z_{0}:=z+t_{0}z^{\prime}\in S_{0}$. Since $z$ is m.r.t.\ $S_{0}$,
we get the contradiction $c(z_{0}-z)=t_{0}^{2}c(z^{\prime})\geq0$.
Therefore $S_{0}$ is NI and this translates into $\varphi_{S_{0}}\geq c$
or $-(S_{0})^{\perp}$ is monotone.

From the equivalence of (i) and (iii) in Proposition \ref{skew unic}
we have that $-S$ is not unique, and so, by the above argument, $-((-S)_{0})^{\perp}=(S_{0})^{\perp}$
is monotone. Hence $(S_{0})^{\perp}$ is skew.

From Corollary \ref{skewmondc} and (\ref{r3}) we have \[
S\subset L=[\psi_{S}=c]\subset\lbrack\varphi_{S}=c]=S_{0}\subset S^{+}.\]
 This leads to (see Lemma \ref{ineq A+} (i))\begin{equation}
\iota_{L}=\psi_{S}\geq\varphi_{S^{+}}\geq\varphi_{S_{0}}=\iota_{(S_{0})^{\perp}},\label{44}\end{equation}
 whence $L\subset(S_{0})^{\perp}\subset S^{\perp}$.

Assume that $z\in(S_{0})^{\perp}\setminus L$. By a separation theorem,
there is $w\in S^{\perp}$ such that $z\cdot w\neq0$. Since $z\in(S_{0})^{\perp}$
$(\subset S^{\perp})$ we know that $w\not\in S_{0}$, whence $c(w)\neq0$.
Notice that, since $(S_{0})^{\perp}$ is skew, $c(z)=0$ and so\[
c(z+\lambda_{0}w)=\lambda_{0}z\cdot w+\lambda_{0}^{2}c(w)=0,\ \mathrm{for}\ \lambda_{0}:=-\frac{1}{c(w)}z\cdot w\neq0,\]
 that is $z+\lambda_{0}w\in S_{0}$. From $z\in(S_{0})^{\perp}$ we
get the contradiction $0=z\cdot(z+\lambda_{0}w)=\lambda_{0}z\cdot w\neq0$.
Therefore $L=(S_{0})^{\perp}$, and by (\ref{44}), this yields $\psi_{S}=\varphi_{S^{+}}=\iota_{L}$.
Since $L$ is skew and $S^{+}$ is NI by Lemma \ref{ineq A+} (ii),
we deduce $S^{++}=[\varphi_{S^{+}}\leq c]=[\varphi_{S^{+}}=c]=L$.
By Proposition \ref{caracter-NI} (i) we have $S^{+}=[\psi_{S}=c]^{+}=L^{+}$,
and so $S^{++}=L^{++}$.

The strict inclusion $L\subsetneq S_{0}$ comes from the fact that
$L$ is a skew subspace while $S_{0}$ is not monotone, while the
strict inclusion $S_{0}\subsetneq S^{+}$ comes from the fact that
$-S^{\perp}$ is not monotone. The other conclusions follow from Proposition
\ref{skew unic} while from $(S_{0})^{\perp}=\operatorname*{cl}{}_{w\times w^{\ast}}S$
we find $\operatorname*{cl}{}_{w\times w^{\ast}}\operatorname*{conv}S_{0}=(S_{0})^{\perp\perp}=S^{\perp}$.
\end{proof}

\strut

We comprise some of the information on a skew linear space (or skew
monotone double-cone) $S\subset Z$ in the following chart:

\begin{center}
\begin{tabular}{l}
1. $S$ is representable iff $S$ is linear and $w\times w^{\ast}-$closed\tabularnewline
2. $S$ is NI iff $-S^{\perp}$ is monotone\tabularnewline
3. $S$ is unique iff $S^{\perp}$ is monotone or $-S^{\perp}$ is
monotone\tabularnewline
4. $S$ is unique and not NI iff $S^{\perp}$ is monotone and non-skew\tabularnewline
5. $S$ is maximal monotone iff $S$ is linear and $w\times w^{\ast}-$closed,
and $-S^{\perp}$ is monotone\tabularnewline
\end{tabular}
\par\end{center}

These results on skew linear subspaces prefigure the results we will
obtain in section \ref{sec:Linear} for linear multifunctions.

In the preceding results the topology $w\times w^{*}$ on $Z$ can
be replaced by any locally convex topology $\sigma$ compatible with
the natural duality $(Z,Z)$, that is, for which $(Z,\sigma)^{*}=Z$,
for example the topology $\tau\times w^{*}$, where $\tau$ is the
initial topology of $X$.

\begin{example} \label{unicrep notNIdr} In \cite[page 89]{Gossez72}
Gossez considered the linear operator $T:\ell_{1}\rightarrow\ell_{\infty}$
defined by $T\left((x_{n})_{n\geq1}\right):=(y_{n})_{n\geq1}$ with
$y_{n}:=\sum_{k\geq1}x_{k}+x_{n}-2\sum_{k=1}^{n}x_{k}$. The operator
$T$ is skew, that is, $\left\langle x,Tx\right\rangle =0$ for every
$x\in\ell_{1}$, or equivalently, $\left\langle x,Ty\right\rangle =-\left\langle y,Tx\right\rangle $
for all $x,y\in\ell_{1}$ (in the duality of $\ell_{1}\times\ell_{\infty}$).
In fact for $x\in\ell_{1}$ we have that $Tx\in\mathfrak{c}$, the
subspace of convergent sequences of $\ell_{\infty}$. Indeed, $\lim Tx=-\sum_{k\geq1}x_{k}=-\left\langle x,e\right\rangle $,
where $e_{n}:=1$ for $n\geq1$. Consequently, $T_{1}x:=Tx+\left\langle x,e\right\rangle e\in\mathfrak{c}_{0}$
for every $x\in\ell_{1}$. From Proposition \ref{p-lin-pos} it follows
that both $\{(T_{1}x,x)\mid x\in\ell_{1}\}\subset\mathfrak{c}_{0}\times\ell_{1}$
and $\{(x,T_{1}x)\mid x\in\ell_{1}\}\subset\ell_{1}\times\ell_{\infty}$
are maximal monotone in the corresponding spaces.

Consider \begin{align}
S & :=\left\{ (Tx,x)\mid x\in\ell_{1},\ \left\langle x,e\right\rangle =0\right\} \subset\mathfrak{c}_{0}\times\ell_{1},\label{sk1}\\
R & :=-S=\left\{ (-Tx,x)\mid x\in\ell_{1},\ \left\langle x,e\right\rangle =0\right\} .\label{sk2}\end{align}
 Let us first determine $S^{\perp}$ (in $\mathfrak{c}_{0}\times\ell_{1}$).
Consider $(u,v)\in S^{\perp}$. Then $\left\langle u,x\right\rangle _{\mathfrak{c}_{0}\times\ell_{1}}+\left\langle Tx,v\right\rangle _{\mathfrak{c}_{0}\times\ell_{1}}=0$
for every $x\in\ell_{1}$ with $\left\langle x,e\right\rangle =0$.
Since $\left\langle u,x\right\rangle _{\mathfrak{c}_{0}\times\ell_{1}}=\left\langle x,u\right\rangle $
and $\left\langle Tx,v\right\rangle _{\mathfrak{c}_{0}\times\ell_{1}}=\left\langle v,Tx\right\rangle =-\left\langle x,Tv\right\rangle $,
we obtain that \[
\left[x\in\ell_{1},\ \left\langle x,e\right\rangle =0\right]\Rightarrow\left\langle x,u-Tv\right\rangle =0,\]
 and so necessarily $u-Tv=\gamma e$, that is, $u=Tv+\gamma e$, for
some $\gamma\in\mathbb{R}$. Since $u\in\mathfrak{c}_{0}$, it follows
that $\gamma=\left\langle v,e\right\rangle $. Hence $S^{\perp}\subset\left\{ (Tv+\left\langle v,e\right\rangle e,v)\mid v\in\ell_{1}\right\} $.
Conversely, if $v,x\in\ell_{1}$ and $\left\langle x,e\right\rangle =0$
then \[
\left\langle Tx,v\right\rangle _{\mathfrak{c}_{0}\times\ell_{1}}+\left\langle Tv+\left\langle v,e\right\rangle e,x\right\rangle _{\mathfrak{c}_{0}\times\ell_{1}}=\left\langle v,Tx\right\rangle +\left\langle x,Tv\right\rangle +\left\langle v,e\right\rangle \left\langle x,e\right\rangle =0,\]
 which proves that \[
S^{\perp}=\left\{ (Tv+\left\langle v,e\right\rangle e,v)\mid v\in\ell_{1}\right\} .\]

Moreover, $S=S^{\perp\perp}$. Indeed, because $S\subset S^{\perp\perp},$
let us prove the converse inclusion. Consider $(y,x)\in S^{\perp\perp}$.
Then \[
0=\left\langle y,v\right\rangle _{\mathfrak{c}_{0}\times\ell_{1}}+\left\langle Tv+\left\langle v,e\right\rangle e,x\right\rangle _{\mathfrak{c}_{0}\times\ell_{1}}=\left\langle v,y\right\rangle +\left\langle x,Tv+\left\langle v,e\right\rangle e\right\rangle =\left\langle y-Tx+\left\langle x,e\right\rangle e,v\right\rangle \]
 for every $v\in\ell_{1},$ and so $y-Tx+\left\langle x,e\right\rangle e=0.$
Taking the limit we get $2\left\langle x,e\right\rangle =0,$ and
so $y=Tx.$ Therefore, $(y,x)\in S,$ and so $S=\operatorname*{cl}_{w\times w^{\ast}}S=S^{\perp\perp}.$
Since $R=-S,$ we obtain that $R=\operatorname*{cl}_{w\times w^{\ast}}R=R^{\perp\perp},$
too.

We know that $\varphi_{S}=\iota_{S^{\perp}}$ and $\varphi_{R}=\iota_{R^{\perp}}=\iota_{-S^{\perp}}.$
Because $\left\langle Tv+\left\langle v,e\right\rangle e,v\right\rangle =\left\langle v,e\right\rangle ^{2}$
and $\left\langle v,e\right\rangle ^{2}>0$ for certain $v$, we get
that $S^{\perp}$ is monotone and non-skew. Therefore, according to
Propositions \ref{skew NI} and \ref{skew unic}, $S$ $(=\operatorname*{cl}_{w\times w^{\ast}}S)$
is representable, unique and not NI with $S^{+}$ $(=S^{\perp}=\operatorname*{dom}\varphi_{S})$
its unique maximal monotone extension; moreover, $S$ is not dual-representable
and $S=[\varphi_{S}=c]=[\iota_{S^{\perp}}=c]$.

On the other hand, because $-R^{\perp}=S^{\perp}$ is monotone, by
Proposition \ref{skew NI} we have that $R$ $(=\operatorname*{cl}_{w\times w^{\ast}}R)$
is maximal monotone. 

\end{example}

\begin{example} Let us consider the sets $S,R$ defined in (\ref{sk1})
and (\ref{sk2}) as subsets of $\ell_{\infty}\times\ell_{1},$ $\ell_{\infty}$
being endowed with the $w^{\ast}$-topology and $\ell_{1}$ with the
weak topology. Then $S$ and $R$ are skew linear subspaces. The calculus
above shows that $S^{\perp}=\left\{ (Tv+\gamma e,v)\mid v\in\ell_{1},~\gamma\in\mathbb{R}\right\} .$
Then one obtains immediately that $S^{\perp\perp}=S.$ Hence $S=\operatorname*{cl}_{w\times w^{\ast}}S=S^{\perp\perp}$
and $R=\operatorname*{cl}_{w\times w^{\ast}}R=R^{\perp\perp}.$ It
follows that $S$ and $R$ are representable (because $S=\operatorname*{cl}_{w\times w^{\ast}}S$
and $R=\operatorname*{cl}_{w\times w^{\ast}}R$), not unique (because
$S^{\perp}$ and $-S^{\perp}$ are not monotone), and consequently,
not NI. \end{example}

\begin{remark} Because $R$ is maximal monotone in $\mathfrak{c}_{0}\times\ell_{1}$
but it is not NI in $\ell_{\infty}\times\ell_{1},$ it furnishes an
example of maximal monotone operator which is not NI in the sense
of Simons. \end{remark}


\section{\label{sec:Linear}Monotone double-cones and monotone linear subspaces}

In the beginning of this section we investigate monotone operators
that admit affine maximal monotone extensions.

\begin{lemma} \label{affine ext}Every affine monotone subset of
$X\times X^{*}$ admits an affine maximal monotone extension. \end{lemma}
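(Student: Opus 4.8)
The plan is to combine Zorn's lemma with an absorption argument showing that an affine monotone set can always swallow, within its affine hull, any point monotonically related to it. First I would normalize the situation: since monotonicity, affineness, and the operation $A\mapsto A^{+}$ are all translation invariant (recall $(A-z)^{+}=A^{+}-z$), I may translate $T$ so that $0\in T$. Then $T$ is a \emph{linear} monotone subspace, and by (\ref{m1}) being monotone means exactly that $c(z)\ge 0$ for all $z\in T$ (take $z'=0$ in $c(z-z')\ge 0$). Any affine monotone extension $M\supseteq T$ then contains $0$, hence is itself a linear subspace, so it suffices to produce a maximal monotone $M$ in the poset $\mathcal{P}$ of linear monotone subspaces containing $T$, ordered by inclusion.

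Zorn's lemma applies to $\mathcal{P}$: it is nonempty (it contains $T$), and the union of a chain of linear monotone subspaces is again a linear subspace (nested subspaces) which stays monotone, since monotonicity is a condition on pairs and any two of its points lie in a common member of the chain. Let $M$ be a maximal element of $\mathcal{P}$. By (\ref{m1}), $M$ is maximal monotone iff $M=M^{+}$, and since $M\subset M^{+}$ holds automatically, I only need to rule out the existence of a point $z_{0}\in M^{+}\setminus M$. Translating back then produces the desired affine maximal monotone extension.

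The heart of the matter --- and the step I expect to be the main obstacle --- is the following absorption claim: if $M$ is a linear monotone subspace and $z_{0}\in M^{+}$, then $N:=M+\mathbb{R}z_{0}$ is still monotone. Granting this, $N\in\mathcal{P}$, so a point $z_{0}\in M^{+}\setminus M$ would yield $N\supsetneq M$, contradicting maximality and forcing $M=M^{+}$. To prove the claim I would first note that $0\in M$ gives $c(z_{0})=c(z_{0}-0)\ge 0$. Since $M$ is a subspace, $z_{0}\in M^{+}$ yields $c(z_{0}-\lambda w)=c(w)\lambda^{2}-(z_{0}\cdot w)\lambda+c(z_{0})\ge 0$ for every $w\in M$ and every $\lambda\in\mathbb{R}$; as a quadratic in $\lambda$ with nonnegative leading coefficient, this forces the Cauchy--Schwarz-type discriminant bound $(z_{0}\cdot w)^{2}\le 4\,c(w)\,c(z_{0})$. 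Now $N$, being a subspace, is monotone iff $c(w+rz_{0})\ge 0$ for all $w\in M$ and $r\in\mathbb{R}$; expanding, $c(w+rz_{0})=c(z_{0})r^{2}+(z_{0}\cdot w)r+c(w)$ is a quadratic in $r$ whose leading coefficient $c(z_{0})$ is nonnegative and whose discriminant $(z_{0}\cdot w)^{2}-4\,c(z_{0})\,c(w)$ is nonpositive by the bound just derived (the degenerate cases $c(z_{0})=0$ or $c(w)=0$ are handled identically, the discriminant then forcing $z_{0}\cdot w=0$). Hence $c(w+rz_{0})\ge 0$ throughout, so $N$ is monotone, which completes the argument.
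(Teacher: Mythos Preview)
Your proof is correct and follows essentially the same route as the paper: translate so the affine set is linear, apply Zorn's lemma to linear monotone subspaces, and show a maximal element $M$ absorbs every $z_{0}\in M^{+}$ by proving $M+\mathbb{R}z_{0}$ is still monotone. The only cosmetic difference is in this last step: you pass through the discriminant bound $(z_{0}\cdot w)^{2}\le 4\,c(w)\,c(z_{0})$, whereas the paper observes directly that for $\alpha\neq 0$ one has $c(w+\alpha z_{0})=\alpha^{2}\,c(z_{0}-(-\alpha^{-1}w))\ge 0$ since $-\alpha^{-1}w\in M$ and $z_{0}\in M^{+}$.
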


\begin{proof} Let $L_{0}$ be affine monotone in $Z$. Without loss
of generality we assume that $0\in L_{0}$. Consider \[
\mathcal{L}=\{L\subset Z\mid L_{0}\subset L,\ L\ \text{is\ a\ linear\ monotone\ subspace}\}\]
 ordered by inclusion. Every chain $\{L_{i}\}_{i}$ in $\mathcal{L}$
admits $\cup_{i}L_{i}$ as an upper bound in $\mathcal{L}$. By the
Zorn Lemma there exists a maximal element in $\mathcal{L}$ denoted
by $L$.

Let $z\in L^{+}$ and $L^{\prime}=L+\mathbb{R}z$. Clearly, $L^{\prime}$
is linear and $L^{\prime}\supset L_{0}$. Moreover, for every $u\in L$
and every $\alpha\in\mathbb{R}$ we have that $c(u+\alpha z)=c(u)\geq0$
if $\alpha=0$ and $c(u+\alpha z)=\alpha^{2}c(z-\alpha^{-1}(-u))\geq0$
if $\alpha\neq0$ since $z\in L^{+}$ and $\alpha^{-1}(-u)\in L$.
Therefore $L^{\prime}\in\mathcal{L}$, and so $L^{\prime}=L$, whence
$z\in L$. This proves that $L$ is maximal monotone. \end{proof}

\begin{theorem} \label{convex ext}Every convex monotone subset of
$X\times X^{*}$ admits an affine maximal monotone extension. \end{theorem}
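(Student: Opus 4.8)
The plan is to reduce the convex case to the affine case, which is already handled by Lemma \ref{affine ext}. The key is Proposition \ref{aff-conv}: if $T$ is convex and monotone, then $c_T$ is convex and, crucially, $\operatorname*{aff}T$ is monotone with $T\subset\operatorname*{aff}T\subset T^{+}$. So given a convex monotone $C\subset X\times X^{*}$, I would first form $L_0:=\operatorname*{aff}C$, which by Proposition \ref{aff-conv} is an affine monotone subset of $Z$ containing $C$.

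Once $L_0=\operatorname*{aff}C$ is known to be affine and monotone, Lemma \ref{affine ext} applies directly: every affine monotone subset admits an affine maximal monotone extension. Thus there exists an affine $L\in\mathfrak{M}(X)$ with $L_0\subset L$. Since $C\subset L_0\subset L$, this $L$ is the desired affine maximal monotone extension of $C$, and the proof is complete.

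The only subtlety — and the step I would check most carefully — is that Proposition \ref{aff-conv} as stated assumes $T$ is nonempty before concluding $\operatorname*{aff}T$ is monotone (the argument there normalizes so that $0\in T$). For a nonempty convex monotone $C$ this is immediate, and the empty set is trivially both convex and monotone and is extended by any maximal monotone operator (which exists by Zorn's lemma applied to $\mathcal{M}(X)$, or by Lemma \ref{affine ext} starting from $L_0=\{0\}$ after a translation). So the argument needs at most a one-line remark to dispose of the trivial case; the substance is entirely carried by the combination of Proposition \ref{aff-conv} and Lemma \ref{affine ext}. I do not anticipate a genuine obstacle here, since both ingredients are already established; the theorem is essentially the observation that passing to the affine hull preserves monotonicity for convex sets, after which the affine extension lemma finishes the job.

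\begin{proof} Let $C\subset Z$ be convex and monotone. If $C=\emptyset$ the conclusion is trivial, so assume $C\neq\emptyset$. By Proposition \ref{aff-conv}, $\operatorname*{aff}C$ is monotone and $C\subset\operatorname*{aff}C$. Applying Lemma \ref{affine ext} to the affine monotone set $\operatorname*{aff}C$, we obtain an affine maximal monotone set $L$ with $\operatorname*{aff}C\subset L$. Since $C\subset\operatorname*{aff}C\subset L$, the set $L$ is an affine maximal monotone extension of $C$. \end{proof}
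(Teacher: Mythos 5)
Your proof is correct and follows exactly the paper's own argument: apply Proposition \ref{aff-conv} to conclude that $\operatorname*{aff}C$ is affine and monotone, then invoke Lemma \ref{affine ext} to obtain an affine maximal monotone extension, which automatically extends $C$. The extra remark about the empty set is a harmless addition; the substance of the two proofs is identical.
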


\begin{proof} Let $T$ be convex monotone in $X\times X^{*}$. According
to Proposition \ref{aff-conv}, $\operatorname*{aff}T$ is (affine
and) monotone thus, by the previous lemma, it admits an affine maximal
monotone extension that is also an extension for $T$. \end{proof}

\strut

The possibility of finding closed forms for the Fitzpatrick and Penot
functions allowed us in the previous section to provide characterizations
of the main classes of skew operators. Similar ideas are used in this
section for the multi-valued linear monotone case; the goal being
to offer criteria for a linear monotone operator to belong to a certain
class in terms of its Penot and Fitzpatrick functions or through the
monotonicity of sets directly associated to the operator via decomposition
and orthogonality.

An important example of linear monotone subspace is that of non-negative
single-valued linear operator, that is, $A:\operatorname*{dom}A=X\rightarrow X^{\ast}$
with $\langle x,Ax\rangle\geq0$ for every $x\in X$; if $\langle x,Ax\rangle=0$
for every $x\in X$ then $A$ is skew. Of course, the linear operator
$A:X\rightarrow X^{\ast}$ is skew iff $A$ and $-A$ are non-negative.
The next two results refer to such situations.

\begin{proposition} \label{p-lin-pos} \emph{(i)} If $A:X\rightarrow X^{\ast}$
is a non-negative linear operator then $A$ is maximal monotone.

\emph{(ii)} If $A^{\prime}:X^{\ast}\rightarrow X$ is a non-negative
linear operator, that is $\langle A^{\prime}x^{\ast},x^{\ast}\rangle\geq0$
for every $x^{\ast}\in X^{\ast}$, then $A^{\prime}$ is maximal monotone.

\emph{(iii)} Assume that $X$ is a Banach space and let $A:X\rightarrow X^{\ast}$
and $A^{\prime}:X^{\ast}\rightarrow X$ be non-negative operators.
Then $A$ and $A^{\prime}$ are continuous (for $X$ and $X^{\ast}$
endowed with the norm topologies). \end{proposition}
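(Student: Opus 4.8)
The plan is to handle (i) and (ii) together by a quadratic-form argument and to derive (iii) from the Closed Graph Theorem.

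For (i), I would first note that $A$ is monotone: for $x_1,x_2\in X$, linearity and non-negativity give $\langle x_1-x_2,Ax_1-Ax_2\rangle=\langle x_1-x_2,A(x_1-x_2)\rangle\ge0$. By the second equivalence in (\ref{m1}) it then suffices to prove $A=A^+$, and since $A\subset A^+$ already follows from monotonicity, only the inclusion $A^+\subset\operatorname*{gph}A$ remains. So let $(x_0,x_0^*)\in A^+$, i.e.\ $\langle x_0-x,x_0^*-Ax\rangle\ge0$ for every $x\in X$. Substituting $x=x_0+ty$ for arbitrary $y\in X$ and $t\in\mathbb{R}$ and using linearity of $A$ produces $t^2\langle y,Ay\rangle-t\langle y,x_0^*-Ax_0\rangle\ge0$ for all $t\in\mathbb{R}$. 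The leading coefficient $\langle y,Ay\rangle$ is non-negative, and a quadratic $at^2+bt$ with $a\ge0$ is non-negative on all of $\mathbb{R}$ only if $b=0$; hence $\langle y,x_0^*-Ax_0\rangle=0$ for every $y\in X$. Since $(X,\tau)$ is separated, $X^*$ separates the points of $X$, so $x_0^*=Ax_0$ and $(x_0,x_0^*)\in\operatorname*{gph}A$.

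Part (ii) is the same argument carried out in the dual pairing. Here $(X^*,w^*)$ is itself a separated locally convex space whose dual is $X$, and $A'$ is a non-negative linear operator on it, so (i) applies verbatim to show $A'$ is maximal monotone as an operator $X^*\rightrightarrows X$; equivalently the set $\{(A'x^*,x^*)\mid x^*\in X^*\}\subset Z$ is maximal monotone, being the inverse of a maximal monotone operator. Alternatively one repeats the quadratic computation directly: if $(x_0,x_0^*)$ is monotonically related to $\operatorname*{gph}A'$, one obtains $\langle A'x_0^*-x_0,y^*\rangle=0$ for all $y^*\in X^*$, whence $x_0=A'x_0^*$ because $\tau$ is separated.

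For (iii) the idea is the Closed Graph Theorem, both $X$ and $X^*$ being Banach spaces; it suffices to show $\operatorname*{gph}A$ is norm-closed. Suppose $x_n\to x$ in $X$ and $Ax_n\to x^*$ in $X^*$. For each fixed $y\in X$, monotonicity gives $\langle x_n-y,Ax_n-Ay\rangle\ge0$, and passing to the limit yields $\langle x-y,x^*-Ay\rangle\ge0$ for all $y\in X$, i.e.\ $(x,x^*)\in A^+=\operatorname*{gph}A$ by (i); thus $x^*=Ax$, the graph is closed, and $A$ is continuous. The identical argument applied to $A'$, invoking (ii), gives continuity of $A'$. The step I expect to be the crux is the limit passage: the term $\langle x_n,Ax_n\rangle$ is a pairing of two simultaneously varying sequences, and its convergence to $\langle x,x^*\rangle$ relies on norm convergence in both factors together with the (joint) continuity of the duality pairing on $X\times X^*$; the remaining cross terms converge because one factor is fixed.
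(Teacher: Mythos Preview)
Your argument is correct and is essentially the paper's own: the same substitution $x=x_{0}+ty$ (the paper uses $x=u-ty$, $t>0$, and then replaces $y$ by $-y$) handles (i), part (ii) is reduced to (i) via the dual pairing, and (iii) is the Closed Graph Theorem applied after noting the graph is closed---the paper simply cites that maximal monotone graphs are strongly closed, while you verify this directly via the limit passage. One small slip in (i): the conclusion $x_{0}^{*}=Ax_{0}$ follows just because a functional vanishing on all of $X$ is the zero functional; the Hahn--Banach fact ``$X^{*}$ separates points of $X$'' is the one needed (and correctly invoked) in (ii), where the roles of $X$ and $X^{*}$ are reversed.
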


\begin{proof} (i) Assume that $A:X\rightarrow X^{\ast}$ is a non-negative
linear operator and let $(u,u^{\ast})\in X\times X^{\ast}$ be m.r.t.\ $A$.
Then $\left\langle u-x,u^{\ast}-Ax\right\rangle \geq0$ for $x\in X$.
Taking $x=u-ty$ with $t>0$, $y\in X$ we get $\left\langle y,u^{\ast}-Au+tAy\right\rangle \geq0$.
Letting $t\rightarrow0$, then replacing $y$ by $-y$ we obtain that
$\left\langle y,u^{\ast}-Au\right\rangle =0$ for every $y\in X$.
Hence $u^{\ast}=Au$, which shows that $A$ is maximal monotone.

(ii) Apply (i) for $A$ replaced by $A^{\prime}$ (or use a similar
argument).

(iii) Assume now that $X$ is a Banach space. Since $A$ is maximal
monotone, then $\mathrm{gph\,}A$ is strongly closed and so $A$ is
continuous. Similarly for $A^{\prime}$. \end{proof}

\strut

Note that the graph of a non-negative linear operator $A:X\rightarrow X^{\ast}$
is a non-negative linear subspace of $Z$ and the graph of a skew
linear operator $A:X\rightarrow X^{\ast}$ is a skew subspace of $Z$.

\begin{proposition} \label{skew-full projection}Let $S\subset Z$
be a skew linear subspace. Then $\Pr_{X}(S)=X$ iff $S$ is the graph
of a skew linear operator $A:X\rightarrow X^{\ast}$. Hence if $\Pr_{X}(S)=X$
then $S=S^{\perp}$ (in particular $S$ is $w\times w^{\ast}$-closed)
and $S$ is maximal monotone. Similarly, $\Pr_{X^{\ast}}(S)=X^{\ast}$
iff $S^{-1}:=\{(x^{\ast},x)\mid(x,x^{\ast})\in S\}$ is the graph
of a skew linear operator $A^{\prime}:X^{\ast}\rightarrow X$. Hence
if $\Pr_{X^{\ast}}(S)=X^{\ast}$ then $S=S^{\perp}$ and $S$ is maximal
monotone. \end{proposition}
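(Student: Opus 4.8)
The plan is to establish the first pair of assertions (those involving $\Pr_X$) in full and then deduce the statements about $\Pr_{X^\ast}$ from the symmetry of the coupling on $Z$. The forward implication of the first equivalence is immediate: if $S=\operatorname*{gph}A$ for a linear $A:X\to X^{\ast}$ (so $\operatorname*{dom}A=X$), then $\Pr_X(S)=\operatorname*{dom}A=X$.

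For the converse I would first show that $\Pr_X(S)=X$ together with skewness forces $S$ to be single-valued. By Proposition \ref{skewdc-char}, $S$ being skew and linear is equivalent to $S\subset S^{\perp}$, so $z\cdot z'=0$ for all $z,z'\in S$. If $(0,x^{\ast})\in S$, then $(0,x^{\ast})\cdot(x',x'^{\ast})=\langle x',x^{\ast}\rangle=0$ for every $(x',x'^{\ast})\in S$; since $\Pr_X(S)=X$ this gives $\langle x',x^{\ast}\rangle=0$ for all $x'\in X$, whence $x^{\ast}=0$. Thus $S$ contains no nonzero element of $\{0\}\times X^{\ast}$, and by linearity $S$ is single-valued and defines a linear map $A:X\to X^{\ast}$ with $S=\operatorname*{gph}A$ and $\operatorname*{dom}A=X$; since $c\equiv0$ on $S$ one has $\langle x,Ax\rangle=0$ for all $x$, so $A$ is skew. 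This is the step in which both hypotheses are used together, and it is the main point to get right.

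Next I would prove $S=S^{\perp}$. The inclusion $S\subset S^{\perp}$ is already recorded in Proposition \ref{skewdc-char}. For the reverse, polarizing $\langle x,Ax\rangle=0$ yields $\langle u,Ax\rangle=-\langle x,Au\rangle$ for all $x,u\in X$. Taking $(u,u^{\ast})\in S^{\perp}$ and pairing against $(x,Ax)\in S$ gives $0=\langle u,Ax\rangle+\langle x,u^{\ast}\rangle=\langle x,u^{\ast}-Au\rangle$ for all $x\in X$, hence $u^{\ast}=Au$ and $(u,u^{\ast})\in S$. Therefore $S=S^{\perp}$, which is $w\times w^{\ast}$-closed because every orthogonal is. For maximal monotonicity I would invoke Proposition \ref{p-lin-pos}(i), as $S$ is the graph of a (non-negative) skew linear operator; alternatively, since $S^{\perp}=S$ is skew, Corollary \ref{skew skew} delivers the same conclusion.

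Finally, the statements about $\Pr_{X^{\ast}}(S)=X^{\ast}$ follow by applying the first half to $S^{-1}\subset X^{\ast}\times X$, read in the dual system $(X^{\ast},X)$. The coupling on $Z$ is invariant under interchanging the two factors, so $S^{-1}$ is again a skew linear subspace and $\Pr_{X^{\ast}}(S)=X^{\ast}$ is exactly the full-projection hypothesis for $S^{-1}$; the argument above then produces a skew linear $A':X^{\ast}\to X$ with $S^{-1}=\operatorname*{gph}A'=(S^{-1})^{\perp}$. Transporting this identity back through the swap gives $S=S^{\perp}$, and Proposition \ref{p-lin-pos}(ii) (equivalently, the invariance of maximal monotonicity under inversion) shows that $S$ is maximal monotone.
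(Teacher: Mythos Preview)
Your proof is correct. The argument for single-valuedness and the appeal to Proposition~\ref{p-lin-pos} for maximal monotonicity match the paper's treatment, and the reduction of the $\Pr_{X^{\ast}}$ case to $S^{-1}$ is the same.

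The one genuine difference is how you obtain $S=S^{\perp}$. You argue directly: polarize the skew identity to get $\langle u,Ax\rangle=-\langle x,Au\rangle$, then read off from the orthogonality relation that any $(u,u^{\ast})\in S^{\perp}$ satisfies $u^{\ast}=Au$. The paper instead observes that $-S$ is also the graph of a skew (hence non-negative) linear operator $X\to X^{\ast}$, so $-S$ is maximal monotone by Proposition~\ref{p-lin-pos}, hence NI; Proposition~\ref{skew NI} then gives that $-(-S)^{\perp}=S^{\perp}$ is the unique maximal monotone extension of $S$, forcing $S=S^{\perp}$. Your route is more elementary and self-contained, needing only the polarization identity rather than the NI machinery; the paper's route, on the other hand, illustrates how the equality $S=S^{\perp}$ fits into the general framework of Section~\ref{sec:Skew}.
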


\begin{proof} Assume that $\Pr_{X}(S)=X$. Let $u^{*}\in X^{*}$
be such that $(0,u^{*})\in S$. For every $x\in X$ there exists $x^{*}\in X^{*}$
with $(x,x^{*})\in S$. Because $S$ is a linear subspace, $(x,x^{*}+u^{*})\in S$.
Hence $0=\langle x,x^{*}+u^{*}\rangle=\langle x,x^{*}\rangle+\langle x,u^{*}\rangle=\langle x,u^{*}\rangle$.
Since $x\in X$ is arbitrary, we get $u^{*}=0$. It follows that,
for every $x\in X$, $S(x)=\{Ax\}$ is a singleton. The operator $A$
obtained in this way is a skew linear operator. From Proposition \ref{p-lin-pos}
we obtain that $A$ is maximal monotone, and so $S$ is maximal monotone.
Similarly since $\Pr_{X}(-S)=X$, we know that $-S$ is maximal monotone,
therefore $-S$ is (NI). From Proposition \ref{skew NI} we have that
$-(-S)^{\perp}=S^{\perp}$ is (the unique) maximal monotone extension
of $S$, and so $S=S^{\perp}$.

Applying the previous result for $S^{-1}$ in the case $\Pr_{X^{*}}(S)=X^{*}$
or repeating the above argument we get the last assertion. \end{proof}

\strut

To a subset $A\subset Z$ we associate its

\begin{itemize}
\item \emph{skew part} $S_{A}:=\operatorname*{Skew}(A):=\{z\in A\mid c(z)=0\}=A\cap[c=0]$,
\item \emph{positive part} $P_{A}:=\operatorname*{Pos}(A):=\{z\in A\mid c(z)>0\}=A\cap[c>0]$,
\item \emph{negative part} $N_{A}:=\operatorname*{Neg}(A):=\{z\in A\mid c(z)<0\}=A\cap[c<0]$,
\item \emph{unitary part} $U_{A}:=\operatorname*{Unit}(A):=\{z\in A\mid c(z)=1\}=A\cap[c=1]$,
\item \emph{crown} $C_{A}:=\operatorname*{Crown}(A)=\operatorname*{cl}_{w\times w^{\ast}}(\operatorname*{conv}U_{A})$;
\end{itemize}
\noindent clearly $A=S_{A}\cup P_{A}\cup N_{A}$.

We also use the notation $\mathbb{U}=[c=1]$, $\mathbb{P}=[c>0]$,
$\mathbb{S}=[c=0]$, and introduce the map \begin{equation}
\zeta:\mathbb{P}\rightarrow Z,\quad\zeta(z)=z/\sqrt{c(z)}\quad(z\in\mathbb{P}).\label{def zeta}\end{equation}
 Note that $c(\zeta(z))=1$ for every $z\in\mathbb{P}$, whence $\zeta(\mathbb{P})=\mathbb{U}$,
$\zeta(z)=z$ for every $z\in\mathbb{U}$, and $\zeta(tz)=\zeta(z)$
for all $t\in\mathbb{R}^{\ast}$ and $z\in\mathbb{P}$, where $\mathbb{R}^{\ast}:=\mathbb{R}\setminus\{0\}$\emph{.}

Whenever $D$ is a double-cone, $\mathbb{R}S_{D}=S_{D}$, $\mathbb{R}^{\ast}P_{D}=P_{D}=(0,\infty)U_{D}=\mathbb{R}^{\ast}U_{D}$,
$\mathbb{R}^{\ast}N_{D}=N_{D}$. Our analysis is based on the study
of the Fitzpatrick and Penot functions for double-cones. Note that
it is readily seen that whenever $D$ is a double-cone with a nonempty
negative part $N_{D}=D\cap\lbrack c<0]$ its Fitzpatrick function
is identically equal to $+\infty$ while its Penot function is improper;
more precisely $\psi_{D}(z)=-\infty$ for $z\in\operatorname*{cl}_{w\times w^{\ast}}(\operatorname*{conv}D)$
and $\psi_{D}(z)=+\infty$ for $z\in Z\setminus\operatorname*{cl}_{w\times w^{\ast}}(\operatorname*{conv}D)$.
Indeed, taking $z_{0}\in N_{D}$ and $t\in\mathbb{R}$ we have that
\[
\varphi_{D}(z)=\sup\{z\cdot z^{\prime}-c(z^{\prime})\mid z^{\prime}\in D\}\geq\sup\{tz\cdot z_{0}+t^{2}(-c(z_{0}))\mid t\in\mathbb{R}\}=\infty,\]
 for all $z\in Z$. Hence $\varphi_{L}=\infty$ and $\varphi_{L}^{\square}=-\infty$.

That is why, from this point of view, it is natural to study double-cones
$D$ that are non-negative (i.e., with an empty negative part $D\cap[c<0]$
or equivalently $c(z)\ge0$, for every $z\in D$); in particular our
main interests lie in studying monotone double-cones or linear operators.
It is clear that two non-negative double-cones coincide if they have
the same skew and unitary parts.

Recall that the \emph{support function} to $A\subset Z$ is given
by $\sigma_{A}(z)=\sup_{u\in A}z\cdot u$ for $z\in Z$, while $\operatorname*{bar}A:=\operatorname*{dom}\sigma_{A}$
denotes the \emph{barrier cone} of $A$, and $\operatorname*{bar}A=\operatorname*{bar}(\operatorname*{cl}_{w\times w^{\ast}}(\operatorname*{conv}A))$.
By convention $\sup\emptyset:=-\infty$ and $\inf\emptyset:=+\infty$;
hence $\sigma_{\emptyset}=-\infty$. Moreover, the \emph{Minkowski
functional} associated to $A\subset Z$ is given by $p_{A}(z)=\inf\{t>0\mid z\in tA\}$.
It is well-known that, when $A$ is a convex set containing $0$,
$p_{A}$ is a sublinear function whose domain is $\mathbb{R}_{+}A$;
moreover, if $A$ is symmetric then $p_{A}$ is even, and so $p_{A}$
is a semi-norm when restricted to its domain which is a linear space.
When $A$ is a symmetric closed convex set then $tA=[p_{A}\leq t]$
for every $t>0$ and $[p_{A}=0]=A_{\infty}$; in particular $p_{A}$
is lsc. Recall that the \emph{asymptotic cone} of the nonempty closed
convex set $C\subset Z$ is $C_{\infty}=\cap_{t>0}t(C-c)$ for some
(every) $c\in C$ and \emph{the polar} of $C$ is $C^{\circ}:=[\sigma_{C}\leq1]$.
Of course, $C_{\infty}$ is a closed convex cone; $C_{\infty}$ is
a linear space when $C$ is symmetric. Recall also that $D_{0}:=D^{\perp}\cap\lbrack c=0]=\operatorname*{Skew}(D^{\perp})$.
We also use the conventions $0\cdot(-\infty):=0$ and $0\cdot(+\infty):=+\infty$;
therefore $0f=\iota_{\operatorname*{dom}f}$.

\begin{proposition} \label{dc FP} Let $D\subset Z$ be a non-negative
double-cone with skew part $S$, non-empty positive part $P$, unitary
part $U$, and crown $C$. Then

\emph{(i)} $\varphi_{D}=\frac{1}{4}\sigma_{C}^{2}+\iota_{S^{\perp}}$,
or in extended form: \[
\varphi_{D}(z)=\sup\limits _{w\in P}\frac{|z\cdot w|^{2}}{4c(w)}\ \text{if}\ z\in S^{\perp},\quad\varphi_{D}(z)=\infty\ \text{otherwise}.\]

\emph{(ii)} $\operatorname*{dom}\varphi_{D}=S^{\perp}\cap\operatorname*{bar}C$,
and so $\operatorname*{dom}\varphi_{D}$ is a linear subspace of $Z$.

\emph{(iii)} $\varphi_{D}(tz)=t^{2}\varphi_{D}(z)$, for all $z\in Z$
and $t\in\mathbb{R}$.

\emph{(iv)} $\psi_{D}=\operatorname*{cl}_{w\times w^{\ast}}[p_{C}^{2}\square\iota_{S^{\perp}}^{\ast}]$.

\emph{(v)} $\psi_{D}(tz)=t^{2}\psi_{D}(z)$, for all $z\in Z$ and
$t\in\mathbb{R}$, $\operatorname*{dom}\psi_{D}$ is a linear space,
and $\operatorname*{lin}D\subset\operatorname*{dom}\psi_{D}\subset\operatorname*{cl}_{w\times w^{\ast}}(\operatorname*{lin}D)$.

\emph{(vi)} $[\varphi_{D}=0]=D^{\perp},$ $[\sigma_{C}=0]=C^{\perp}=U^{\perp}=P^{\perp},$
$D^{\perp}=S^{\perp}\cap C^{\perp}.$

\emph{(vii)} $D^{+}=D_{0}\cup\left(S^{\perp}\cap\zeta^{-1}(2C^{\circ})\right)$
with $\operatorname*{Skew}(D^{+})=D_{0}:=\operatorname*{Skew}(D^{\perp})$,
$\operatorname*{Unit}(D^{+})=(2C^{\circ})\cap S^{\perp}\cap\mathbb{U}$
and $\operatorname*{Pos}(D^{+})=S^{\perp}\cap\zeta^{-1}(2C^{\circ})$.
\end{proposition}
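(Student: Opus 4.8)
The backbone is the closed form (i); parts (ii), (iii), (vi), (vii) are bookkeeping consequences of it, while (iv)--(v) are obtained by conjugating (i). To prove (i) I would argue directly from $\varphi_{D}(z)=\sup\{z\cdot z'-c(z')\mid z'\in D\}$. Since $D$ is non-negative its negative part is empty, so $D=S\cup P$ and the supremum splits. Over the skew part $S$ (on which $c\equiv0$ and $\mathbb{R}S=S$) it equals $\sup_{z'\in S}z\cdot z'$, which is $0$ when $z\in S^{\perp}$ and $+\infty$ otherwise, i.e. $\iota_{S^{\perp}}(z)$. Over $P=\mathbb{R}^{\ast}U$ I would write $z'=tw$ with $w\in U$, $t\in\mathbb{R}^{\ast}$, so that $z\cdot z'-c(z')=t(z\cdot w)-t^{2}$; maximizing this scalar quadratic in $t$ gives $\tfrac14(z\cdot w)^{2}$, and the supremum over $w\in U$ is $\tfrac14\sup_{w\in U}(z\cdot w)^{2}$. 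Because $c$ is even and $D$ is a double-cone, $U$ is symmetric, whence $\sup_{w\in U}(z\cdot w)^{2}=(\sup_{w\in U}z\cdot w)^{2}=\sigma_{C}(z)^{2}$ (the support function being unchanged on passing to the closed convex hull $C$). Adding the two contributions yields $\varphi_{D}=\tfrac14\sigma_{C}^{2}+\iota_{S^{\perp}}$ together with its extended form.

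Parts (ii), (iii), (vi) are then immediate. For (ii), $\varphi_{D}(z)<\infty$ iff $z\in S^{\perp}$ and $\sigma_{C}(z)<\infty$, i.e. $z\in S^{\perp}\cap\operatorname*{bar}C$; since $C$ is symmetric, $\operatorname*{bar}C=\operatorname*{dom}\sigma_{C}$ is a symmetric convex cone, hence a linear subspace, and so is its intersection with $S^{\perp}$. For (iii) the substitution $z'\mapsto tz'$ ($t\neq0$) in the definition gives $\varphi_{D}(tz)=t^{2}\varphi_{D}(z)$ at once, with $\varphi_{D}(0)=0$ handled directly. For (vi), $\varphi_{D}(z)=0$ forces $z\in S^{\perp}$ and $\sigma_{C}(z)=0$; as $0\in C$ and $C$ is symmetric, $\sigma_{C}(z)=0\Leftrightarrow z\in C^{\perp}$, and $C^{\perp}=U^{\perp}=P^{\perp}$ since these sets share a linear span; finally $S^{\perp}\cap C^{\perp}=(S\cup P)^{\perp}=D^{\perp}$. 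Part (vii) is obtained by intersecting $D^{+}=[\varphi_{D}\leq c]$ with the three regions $[c<0]$, $[c=0]$, $[c>0]$: the first is empty (taking $w=0\in D$ in the definition of $D^{+}$ already forces $c(z)\geq0$); on $[c=0]$ the inequality $\tfrac14\sigma_{C}(z)^{2}\leq0$ with $z\in S^{\perp}$ gives $z\in S^{\perp}\cap C^{\perp}\cap[c=0]=D_{0}$; and on $\mathbb{P}$ the inequality $\tfrac14\sigma_{C}(z)^{2}\leq c(z)$ rewrites, via $\sigma_{C}(\zeta(z))=\sigma_{C}(z)/\sqrt{c(z)}$, as $\sigma_{C}(\tfrac12\zeta(z))\leq1$, i.e. $\zeta(z)\in2C^{\circ}$, yielding $\operatorname*{Pos}(D^{+})=S^{\perp}\cap\zeta^{-1}(2C^{\circ})$ and, specializing $c(z)=1$ (so $\zeta(z)=z$), the stated $\operatorname*{Unit}(D^{+})$.

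The substantial step is (iv). Here I would use the biconjugation relation $\psi_{D}=c_{D}^{\square\square}=\varphi_{D}^{\square}$, valid because $\psi_{D}=\operatorname*{cl}_{w\times w^{\ast}}\operatorname*{conv}c_{D}$ is proper (it is nonnegative, being the closure of the convex hull of the nonnegative function $c_{D}$, and finite at $0$). Thus $\psi_{D}=(\tfrac14\sigma_{C}^{2}+\iota_{S^{\perp}})^{\square}$. Two ingredients remain: the conjugate $\iota_{S^{\perp}}^{\square}=\sigma_{S^{\perp}}$ (the term written $\iota_{S^{\perp}}^{\ast}$ in the statement), and the identity $(\tfrac14\sigma_{C}^{2})^{\square}=p_{C}^{2}$. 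For the latter I would exploit the gauge/support duality $\sigma_{C}=p_{C^{\circ}}$, so that $\sigma_{C}$ is a gauge with polar gauge $p_{C}$, and combine it with the one-dimensional transform $\sup_{t\geq0}(st-\tfrac14t^{2})=s^{2}$ for $s\geq0$. Finally, the conjugate-of-a-sum identity $(F+G)^{\square}=\operatorname*{cl}_{w\times w^{\ast}}(F^{\square}\square G^{\square})$ for proper lsc convex $F,G$ produces $\psi_{D}=\operatorname*{cl}_{w\times w^{\ast}}[p_{C}^{2}\square\iota_{S^{\perp}}^{\ast}]$. I expect this last point to be the main obstacle: with no qualification condition available a priori, the conjugate of a sum equals the inf-convolution of conjugates only after taking the lsc closure, so the closure must be retained and properness checked for the closed inf-convolution identity to apply.

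Part (v) then follows from (iv). Degree-two homogeneity of $\psi_{D}$ is inherited from that of $\varphi_{D}$ under conjugation (the same $z'\mapsto tz'$ reparametrization as in (iii)); consequently $\operatorname*{dom}\psi_{D}$ is a symmetric convex cone, hence a linear subspace. For the sandwich, $\psi_{D}\leq c_{D}$ gives $\psi_{D}(z)\leq c(z)<\infty$ for $z\in D$, so $D\subset\operatorname*{dom}\psi_{D}$ and therefore $\operatorname*{lin}D\subset\operatorname*{dom}\psi_{D}$; while $\operatorname*{dom}\psi_{D}=\operatorname*{dom}(\operatorname*{cl}_{w\times w^{\ast}}\operatorname*{conv}c_{D})\subset\operatorname*{cl}_{w\times w^{\ast}}(\operatorname*{conv}\operatorname*{dom}c_{D})=\operatorname*{cl}_{w\times w^{\ast}}(\operatorname*{lin}D)$, using that $D$ is a double-cone.
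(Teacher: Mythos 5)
Your proposal is correct, and its skeleton is the same as the paper's: part (i) by splitting the supremum over $D=S\cup\mathbb{R}^{\ast}U$ and maximizing a scalar quadratic in $t$; parts (ii), (iii), (vi), (vii) read off from the closed form; parts (iv)--(v) by conjugation, using $\psi_{D}=\varphi_{D}^{\square}$ (your properness check for $\psi_{D}$ is a point the paper leaves implicit) together with the closed inf-convolution formula for the conjugate of a sum. The only genuine divergence is the sub-step $(\tfrac{1}{4}\sigma_{C}^{2})^{\square}=p_{C}^{2}$: you obtain it from gauge/support polarity ($\sigma_{C}=p_{C^{\circ}}$, with polar gauge $p_{C}$) plus the one-dimensional transform, whereas the paper applies the composition theorem \cite[Th.\ 2.8.10 (iii)]{Za-book} to $g\circ\sigma_{C}$ with $g(t)=\tfrac{1}{4}t^{2}$, obtaining a minimum over $s\geq0$ whose $s=0$ term $(\iota_{\operatorname*{dom}\sigma_{C}})^{\ast}=\iota_{C_{\infty}}$ is then absorbed because $p_{C}$ vanishes on $C_{\infty}\subset C$. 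Your route is viable and arguably slicker, but note that your diagnosis of the ``main obstacle'' is misplaced: the sum rule $(F+G)^{\square}=\operatorname*{cl}_{w\times w^{\ast}}(F^{\square}\square G^{\square})$ is routine here (it needs only $F,G$ proper lsc convex with intersecting domains, and $0$ lies in both), while the delicate point --- where the paper spends its effort --- is precisely the conjugation of $\tfrac{1}{4}\sigma_{C}^{2}$ when $\sigma_{C}$ is extended-valued: the gauge-conjugacy formula must be invoked in a form valid for extended-valued gauges, i.e.\ the level-set decomposition of the supremum must account for the contribution of $[\sigma_{C}=0]$, which is exactly the asymptotic-cone term the paper handles. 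A final cosmetic point for (i): the supremum over the union $S\cup P$ is the maximum, not the sum, of the two partial suprema; they coincide because $\iota_{S^{\perp}}$ takes only the values $0$ and $\infty$ while $\tfrac{1}{4}\sigma_{C}^{2}\geq0$, a one-line remark the paper makes explicitly and your write-up should too.
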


\begin{proof} (i) Let $z\in Z$. Then \begin{align*}
\varphi_{D}(z) & =\sup\left\{ z\cdot z^{\prime}-c(z^{\prime})\mid z^{\prime}\in S\cup\mathbb{R}U\right\} \\
 & =\max\left(\sup\left\{ z\cdot z^{\prime}\mid z^{\prime}\in S\right\} ,\sup\left\{ \lambda z\cdot z^{\prime}-\lambda^{2}\mid z^{\prime}\in U,\ \lambda\in\mathbb{R}\right\} \right)\\
 & =\max\left(\iota_{S^{\perp}}(z),\sup\left\{ \tfrac{1}{4}(z\cdot z^{\prime})^{2}\mid z^{\prime}\in U\right\} \right)=\max\left(\iota_{S^{\perp}}(z),\tfrac{1}{4}\sigma_{U}^{2}(z)\right)\\
 & =\iota_{S^{\perp}}(z)+\tfrac{1}{4}\sigma_{U}^{2}(z)=\iota_{S^{\perp}}(z)+\tfrac{1}{4}\sigma_{C}^{2}(z).\end{align*}

The first part in (ii) is obvious, while the linearity of $\operatorname*{dom}\varphi_{D}$
and (iii) follow from the fact that $\sigma_{U}$ is an extended seminorm
(since $U=-U$).

(iv) Let us determine $\psi_{D}$. Because $\varphi_{D}=\tfrac{1}{4}\sigma_{C}^{2}+\iota_{S^{\perp}}$,
and $\sigma_{C}^{2}$, $\iota_{S^{\perp}}$ are proper ($w\times w^{\ast}-$)
lsc convex functions with $0\in\operatorname*{dom}\sigma_{C}^{2}\cap\operatorname*{dom}\iota_{S^{\perp}}$,
we have that $\psi_{D}=\varphi_{D}^{\square}=\operatorname*{cl}_{w\times w^{\ast}}[(\tfrac{1}{4}\sigma_{C}^{2})^{\ast}\square\iota_{S^{\perp}}^{\ast}]$.
Clearly, $\iota_{S^{\perp}}^{\ast}=\iota_{\operatorname*{cl}_{w\times w^{\ast}}(\operatorname*{aff}S)}$.

The crown $C$ is a symmetric set because so is $U$. Note that $\sigma_{U}^{\ast}=\sigma_{C}^{\ast}=\iota_{C}$
(see \cite[Th.\ 2.4.14]{Za-book}). Let us determine $(\tfrac{1}{4}\sigma_{C}^{2})^{\ast}$.
To do this we apply \cite[Th.\ 2.8.10 (iii)]{Za-book} for $X$ replaced
by $X\times X^{\ast}$, $Y=\mathbb{R}$, $Q=\mathbb{R}_{+}$, $f:=0$,
$g(t):=0$ for $t<0$, $g(t):=\tfrac{1}{4}t^{2}$ for $t\geq0$ and
$H:=\sigma_{C}$; clearly $g$ is $Q$--increasing, convex and continuous,
hence continuous at any $x_{0}\in\operatorname*{dom}f\cap H^{-1}(\operatorname*{dom}g)=\operatorname*{dom}\sigma_{C}$.
Moreover, $g^{\ast}(s)=\infty$ for $s<0$ and $g^{\ast}(s)=s^{2}$
for $s\geq0$. Applying \cite[Th.\ 2.8.10 (iii)]{Za-book} we obtain
that \begin{align*}
(\tfrac{1}{4}\sigma_{C}^{2})^{\ast}(z) & =\min\left\{ (s\sigma_{C})^{\ast}(z)+g^{\ast}(s)\mid s\geq0\right\} \\
 & =\min\left\{ (0\sigma_{C})^{\ast}(z),\inf\left\{ s(\sigma_{C})^{\ast}(s^{-1}z)+s^{2}\mid s>0\right\} \right\} \\
 & =\min\left\{ (\iota_{\operatorname*{dom}\sigma_{C}})^{\ast}(z),\inf\left\{ s\iota_{C}(s^{-1}z)+s^{2}\mid s>0\right\} \right\} .\end{align*}
 But \[
\inf\left\{ s\iota_{C}(s^{-1}z)+s^{2}\mid s>0\right\} =\inf\left\{ s^{2}\mid s>0,\ (z)\in sC\right\} =\left(p_{C}(z)\right)^{2},\]
 where $p_{C}$ is the Minkowski functional associated to $C$; $p_{C}$
is a seminorm on the linear space $\operatorname*{dom}p_{C}=\mathbb{R}_{+}C$.
Because for $f\in\Gamma(E)$ one has $f_{\infty}=\sigma_{\operatorname*{dom}f^{\ast}}$,
we have that $\operatorname*{cl}\operatorname*{dom}f^{\ast}=\partial f_{\infty}(0)$
(see e.g.\ \cite[Exer.\ 2.23]{Za-book}). Taking $f=\iota_{C}$,
we obtain that $\operatorname*{cl}(\operatorname*{dom}\sigma_{C})=\partial\iota_{C_{\infty}}(0)=(C_{\infty})^{0}$.
Hence $(\iota_{\operatorname*{dom}\sigma_{C}})^{\ast}=\iota_{(\operatorname*{dom}\sigma_{C})^{0}}=\iota_{C_{\infty}}$.
Since $0\in C$ we have that $C=C+C_{\infty}\supset C_{\infty}$,
and so $\left(p_{C}(z)\right)^{2}=0=(\iota_{\operatorname*{dom}\sigma_{C}})^{\ast}(z)$
for $(z)\in C_{\infty}$. Therefore, \[
(\tfrac{1}{4}\sigma_{C}^{2})^{\ast}(z)=\left(p_{C}(z)\right)^{2}\quad\forall z\in Z.\]

It follows that $\psi_{D}=(\operatorname*{cl}_{w\times w^{\ast}}\mu)^{2}$,
where $\mu:=p_{C}\square\iota_{L}$ with $L:=\operatorname*{cl}_{w\times w^{\ast}}(\operatorname*{lin}S)$.

(v) Because $p_{C}$ and $\iota_{L}$ are symmetric sublinear functionals,
so is $\mu$; hence $\operatorname*{cl}_{w\times w^{\ast}}\mu$ is
a symmetric sublinear functional, too. It follows that $\operatorname*{dom}\psi_{D}=\operatorname*{dom}\mu$
is a linear space. The mentioned inclusions follow from the fact that
$\psi_{D}={\operatorname*{cl}}_{w\times w^{\ast}}(\operatorname*{conv}c_{D})\leq c_{D}$,
and so\[
D\subset\lbrack\psi_{D}=c]\subset\operatorname*{dom}\psi_{D}=\operatorname*{dom}({\operatorname*{cl}}_{w\times w^{\ast}}c_{D})\subset{\operatorname*{cl}}_{w\times w^{\ast}}(\operatorname*{conv}(\operatorname*{dom}c_{D}))={\operatorname*{cl}}_{w\times w^{\ast}}(\operatorname*{conv}D).\]

(vi) Since $D$ is non-negative and $0\in D,$ clearly $D^{\perp}\subset\lbrack\varphi_{D}=0].$
Let $z\in\lbrack\varphi_{D}=0];$ then $z\cdot z^{\prime}\leq c(z^{\prime})$
for every $z^{\prime}\in D,$ whence $tz\cdot z^{\prime}=z\cdot(tz^{\prime})\leq c(tz^{\prime})=t^{2}c(z^{\prime})$
for all $t\in\mathbb{R}$ and $z^{\prime}\in D.$ Dividing by $t>0,$
then by $t<0,$ and letting $t\rightarrow0$ we get $z\cdot z^{\prime}=0$
for $z^{\prime}\in D,$ and so $z\in D^{\perp}.$ Hence $D^{\perp}=[\varphi_{D}=0].$
A simpler argument (using the symmetry of $C$) gives $[\sigma_{C}=0]=C^{\perp}.$
It is obvious that $P^{\perp}=U^{\perp}=C^{\perp}.$ Thus, from $D=S\cup P$
we get $D^{\perp}=S^{\perp}\cap P^{\perp}=S^{\perp}\cap C^{\perp}$.

(vii) We know that $D^{+}=[\varphi_{D}\leq c]=S^{\perp}\cap\lbrack\sigma_{C}^{2}\leq4c]$
is a non-negative double-cone. Hence $\operatorname*{Skew}([\sigma_{C}^{2}\leq4c])=[\sigma_{C}=0]\cap\lbrack c=0]=C^{\perp}\cap\lbrack c=0]$;
therefore $\operatorname*{Skew}(D^{+})=S^{\perp}\cap P^{\perp}\cap\lbrack c=0]=\operatorname*{Skew}(D^{\perp})=:D_{0}$
while $\operatorname*{Pos}([\sigma_{C}^{2}\leq4c])=\zeta^{-1}(2C^{\circ})$.
The proof is complete. \end{proof}

\strut

Let us call a double-cone $D$ \emph{positive} if $D$ is non-negative
and $\operatorname*{Skew}(D)=\{0\}$. In this case $\varphi_{D}=\frac{1}{4}\sigma_{C}^{2}$,
$\psi_{D}=p_{C}^{2}$, where $C=\operatorname*{Crown}(D)$, and $\operatorname*{Unit}(D^{+})=(2C^{\circ})\cap\mathbb{U}$,
$D^{+}=D_{0}\cup\zeta^{-1}(2C^{\circ})$.

Therefore a positive double-cone $D$ is monotone iff $D\subset D^{+}$
iff $C\subset2C^{\circ}$.

\begin{proposition} \label{dc mon}Let $D\subset Z$ be a non-negative
double-cone with skew part $S$, non-empty positive part $P$, unitary
part $U$, and crown $C$. TFAE:

\emph{(a)} $D$ is monotone,

\emph{(b)} $\left\langle z,z^{\prime}\right\rangle ^{2}\leq4c(z)\cdot c(z^{\prime})$
for all $z,z^{\prime}\in D$,

\emph{(c)} $S\subset D_{0}$ and $U\subset S^{\perp}\cap(2C^{\circ})$,

\emph{(d)} $C\subset S^{\perp}\cap(2C^{\circ})$ and $S\subset S^{\perp}$.

In this case $S$ is a skew monotone double-cone, $D\subset S^{\perp}$,
and $\operatorname*{cl}_{w\times w^{\ast}}(\operatorname*{conv}S)\subset D^{\perp}$.
\end{proposition}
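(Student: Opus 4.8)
The plan is to establish the cycle $(a)\Leftrightarrow(b)$ by an elementary discriminant computation and then to read $(a)\Leftrightarrow(c)\Leftrightarrow(d)$ off the closed form of $D^{+}$ supplied by Proposition \ref{dc FP}. For $(a)\Leftrightarrow(b)$: since $D$ is a double-cone, $D\in\mathcal{M}(X)$ (i.e.\ $c(z-z')\ge0$ for all $z,z'\in D$) is equivalent to $c(z-tz')\ge0$ for all $z,z'\in D$ and all $t\in\mathbb{R}$. Expanding $c(z-tz')=t^{2}c(z')-t\,(z\cdot z')+c(z)$ and using that $D$ is non-negative, this is a quadratic in $t$ with non-negative leading coefficient, and its nonnegativity for every $t$ amounts exactly to the discriminant bound $(z\cdot z')^{2}\le4c(z)c(z')$, which is $(b)$; the degenerate case $c(z')=0$ forces $z\cdot z'=0$, again $(b)$.

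For $(a)\Leftrightarrow(c)$ I would combine $D\in\mathcal{M}(X)\Leftrightarrow D\subset D^{+}$ with Proposition \ref{dc FP}(vii), which presents $D^{+}$ as a non-negative double-cone with $\operatorname*{Skew}(D^{+})=D_{0}$ and $\operatorname*{Pos}(D^{+})=S^{\perp}\cap\zeta^{-1}(2C^{\circ})$. Writing $D=S\cup P$ (no negative part) and comparing strata of equal sign, $D\subset D^{+}$ splits into $S\subset D_{0}$ and $P\subset S^{\perp}\cap\zeta^{-1}(2C^{\circ})$. Since $P=\mathbb{R}^{\ast}U$ and both $S^{\perp}$ and $\zeta^{-1}(2C^{\circ})$ are invariant under nonzero scaling ($S^{\perp}$ being linear, and $2C^{\circ}$ symmetric with $\zeta(tz)=\pm\zeta(z)$), while $\zeta$ fixes $U$ pointwise, the second inclusion reduces to $U\subset S^{\perp}\cap(2C^{\circ})$, giving $(c)$.

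For $(c)\Leftrightarrow(d)$ I would first show the ``unitary'' and ``crown'' clauses agree: as $C=\operatorname*{cl}_{w\times w^{\ast}}(\operatorname*{conv}U)$ and both $S^{\perp}$ and $2C^{\circ}=[\sigma_{C}\le2]$ are closed convex sets, each contains $U$ iff it contains $C$, so $U\subset S^{\perp}\cap(2C^{\circ})\Leftrightarrow C\subset S^{\perp}\cap(2C^{\circ})$. Under this common hypothesis, $U\subset S^{\perp}$ yields $S\subset U^{\perp}=C^{\perp}$ by symmetry of the coupling (and Proposition \ref{dc FP}(vi)); since $D_{0}=D^{\perp}\cap[c=0]=S^{\perp}\cap C^{\perp}\cap[c=0]$ and $S\subset[c=0]$ automatically, the clause $S\subset D_{0}$ collapses to $S\subset S^{\perp}$. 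Hence $(c)$ and $(d)$ coincide.

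For the final assertions, assume the equivalent conditions hold. Then $S\subset S^{\perp}$ together with Proposition \ref{skewdc-char} makes $S$ a skew monotone double-cone; $D=S\cup P$ with $P=\mathbb{R}^{\ast}U\subset S^{\perp}$ gives $D\subset S^{\perp}$; and $\operatorname*{cl}_{w\times w^{\ast}}(\operatorname*{conv}S)=S^{\perp\perp}\subset S^{\perp}\cap C^{\perp}=D^{\perp}$, using $S\subset S^{\perp}$ (with $S^{\perp}$ closed) for the first factor and $U\subset S^{\perp}$ (which places $S^{\perp\perp}$ inside $U^{\perp}=C^{\perp}$) for the second. I expect the main difficulty to be organizational rather than analytic: correctly matching the skew and positive strata of $D$ against those of $D^{+}$ and checking the scaling invariance of $\zeta^{-1}(2C^{\circ})$; the discriminant step and the symmetry identity $S\subset C^{\perp}$ are the short conceptual cores that make the strata align.
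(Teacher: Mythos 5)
Your proof is correct and takes essentially the same route as the paper: the discriminant computation for (a)$\Leftrightarrow$(b), Proposition \ref{dc FP}(vii) together with the stratification $D\subset D^{+}\Leftrightarrow\big(S\subset\operatorname*{Skew}(D^{+})\text{ and }P\subset\operatorname*{Pos}(D^{+})\big)$ for (a)$\Leftrightarrow$(c), and the closedness/convexity of $S^{\perp}\cap(2C^{\circ})$ plus orthogonality relations from Proposition \ref{dc FP}(vi) for (c)$\Leftrightarrow$(d). The only cosmetic deviations are that you derive $D\subset S^{\perp}$ from (c)/(d) rather than directly from (b) as the paper does, and you make explicit the scaling invariance of $\zeta^{-1}(2C^{\circ})$ and the appeal to Proposition \ref{skewdc-char} for the skew monotonicity of $S$, steps the paper leaves implicit.
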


\begin{proof} (a) $\Leftrightarrow$ (b) Assume that $D$ is monotone
and fix $z,z^{\prime}\in D$. Then $tz\in D$ for $t\in\mathbb{R}$,
and so \[
c(tz-z^{\prime})=t^{2}c(z)-t\left\langle z,z^{\prime}\right\rangle +c(z^{\prime})\geq0\quad\forall t\in\mathbb{R},\]
 which is equivalent to $c(z)$, $c(z^{\prime})\geq0$ and $\left\langle z,z^{\prime}\right\rangle ^{2}\leq4c(z)\cdot c(z^{\prime})$.

(a) $\Leftrightarrow$ (c) Both $D$ and $D^{+}$ are non-negative
double-cones with $\operatorname*{Skew}(D^{+})=D_{0}$ and $\operatorname*{Pos}(D^{+})=\zeta^{-1}(2C^{\circ})\cap S^{\perp}$.
Clearly, $D$ is monotone iff $D\subset D^{+}$ iff $S\subset\operatorname*{Skew}(D^{+})$
and $U\subset\operatorname*{Pos}(D^{+})$. Note that $U\subset\zeta^{-1}(2C^{\circ})\cap S^{\perp}$
iff $U\subset S^{\perp}\cap2C^{\circ}$.

(c) $\Leftrightarrow$ (d) The direct implication is obvious. For
the converse we have to prove that $S\subset D_{0}$, or equivalently
$S\subset D^{\perp}$ $(=S^{\perp}\cap P^{\perp})$. First observe
that $P^{\perp}=U^{\perp}=C^{\perp}$. Since $C\subset S^{\perp}$,
we have that $S\subset S^{\perp\perp}\subset C^{\perp}=P^{\perp}$.
Since by our hypothesis we have $S\subset S^{\perp}$, we get $S\subset D^{\perp}$.

The inclusion $D\subset S^{\perp}$ ($\Leftrightarrow S\subset D^{\perp}$)
follows from (b), while from $S\subset D^{\perp}$ we get $\operatorname*{cl}_{w\times w^{\ast}}(\operatorname*{conv}S)\subset D^{\perp}$.
\end{proof}

\begin{remark} \label{affmondc} It is interesting to observe that
if $D\subset Z$ is a monotone double-cone, $\operatorname*{aff}L$
could be non-monotone. For this claim consider $X:=\mathbb{R}^{2}$
identified with its dual, $z_{1}:=\left((0,1),(1,1)\right)$, $z_{2}:=\left((1,1),(1,0)\right)$,
$z_{3}:=\left((1,0),(1,0)\right)$ and $d_{i}:=\mathbb{R}z_{i}$.
We have that $c(z_{1})=c(z_{2})=c(z_{3})=1\geq0$; moreover $\left\langle z_{1},z_{2}\right\rangle =\left\langle z_{2},z_{3}\right\rangle =2$,
$\left\langle z_{1},z_{3}\right\rangle =1$, and so $\left\langle z_{1},z_{2}\right\rangle ^{2}\leq4c(z_{1})\cdot c(z_{2})$,
$\left\langle z_{2},z_{3}\right\rangle ^{2}\leq4c(z_{2})\cdot c(z_{3})$,
$\left\langle z_{1},z_{3}\right\rangle ^{2}\leq4c(z_{1})\cdot c(z_{3})$.
It follows that $D:=\cup_{i=1}^{3}\mathbb{R}z_{i}$ is a monotone
double-cone. However, $z:=z_{1}-2z_{2}+z_{3}=\left((-1,-1),(0,1)\right)\in\operatorname*{aff}D$
and $c(z)=-1<0$. Hence $\operatorname*{aff}L$ is not monotone. \end{remark}

It is clear that a linear subspace $L$ is monotone iff $L$ is non-negative
iff $L=S_{L}\cup P_{L}$.

\begin{theorem} \label{lin FP} Let $L\subset Z$ be linear monotone
subspace with skew part $S$, nonempty positive part $P$, unitary
part $U$, and crown $C$. Then

\emph{(i)} $S$ is a skew linear subspace of $Z$.

\emph{(ii)} $S+U=U$, $S+P=P$ and $S\subset\operatorname*{conv}U\subset C$;
in particular $S\subset C_{\infty}$, and so $\operatorname*{bar}U=\operatorname*{bar}C\subset S^{\perp}$.

\emph{(iii)} $\operatorname*{conv}U=\{z\in L\mid c(z)\le1\}=[c_{L}\le1]$.

\emph{(iv)} $\varphi_{L}=\frac{1}{4}\sigma_{C}^{2}$, $\operatorname*{dom}\varphi_{L}=\operatorname*{bar}C\subset S^{\perp}$
is a linear subspace of $Z$ and $\varphi_{L}(tz)=t^{2}\varphi_{L}(z)$,
for all $z\in Z$ and $t\in\mathbb{R}$.

\emph{(v)} $\psi_{L}=p_{C}^{2}$, $C=[\psi_{L}\leq1]$, $C_{\infty}=[\psi_{L}=0]$,
$\psi_{L}(tz)=t^{2}\psi_{L}(z)$ for all $z\in Z$ and $t\in\mathbb{R}$,
$\operatorname*{dom}\psi_{L}=\mathbb{R}C$ is a linear space, and
$L\subset\operatorname*{dom}\psi_{L}\subset\operatorname*{cl}_{w\times w^{\ast}}L$.

\emph{(vi)} $-C_{\infty}$ is a $w\times w^{*}$-closed monotone linear
subspace.

\emph{(vii)} If, moreover, $L$ is $w\times w^{*}$-closed then $C_{\infty}=S$;
in particular $C_{\infty}$ is skew and $S$ is $w\times w^{*}$-closed.
\end{theorem}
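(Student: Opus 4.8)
The plan is to obtain every assertion as a specialization of Proposition~\ref{dc FP} applied to the non-negative double-cone $D=L$, using the extra linear structure of $L$ to sharpen the conclusions. Write $S=S_L$, $U=U_L$, $P=P_L$, $C=C_L$ throughout. For (i), if $z,z'\in S$ then $z+z'\in L$ by linearity and $c(z+z')=\langle z,z'\rangle$, while Proposition~\ref{dc mon}(b) gives $\langle z,z'\rangle^2\le 4c(z)c(z')=0$; hence $z+z'\in S$, so $S$ is a skew linear subspace. For (ii) I would invoke $D\subset S^{\perp}$ (the closing assertion of Proposition~\ref{dc mon}): for $s\in S$, $u\in U$ one has $s+u\in L$ and $c(s+u)=\langle s,u\rangle+1=1$, whence $S+U=U$, and the same computation gives $S+P=P$. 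Fixing any $u\in U$ (which exists since $P\neq\emptyset$ forces $\zeta(p)\in U$ for $p\in P$), the identity $s=\tfrac12(s+u)+\tfrac12(s-u)$ with $s\pm u\in U$ yields $S\subset\operatorname*{conv}U\subset C$; moreover $S+\operatorname*{conv}U\subset\operatorname*{conv}U$, so passing to closures gives $C+S\subset C$, and since $S$ is a subspace this is exactly $S\subset C_\infty$. Finally $\operatorname*{bar}U=\operatorname*{bar}C$, and each $z\in\operatorname*{bar}C$ is bounded above on $C_\infty\supset S$, hence vanishes on the subspace $S$, giving $\operatorname*{bar}C\subset S^{\perp}$.

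For (iii), $c_L$ is convex by Proposition~\ref{aff-conv}, so any convex combination of points of $U$ lies in $[c_L\le1]$, proving $\operatorname*{conv}U\subset[c_L\le1]$; conversely, for $z\in L$ with $0<c(z)\le1$ and $\alpha:=\sqrt{c(z)}\in(0,1]$ the decomposition $z=\tfrac{1+\alpha}{2}\zeta(z)+\tfrac{1-\alpha}{2}\bigl(-\zeta(z)\bigr)$ with $\pm\zeta(z)\in U$ shows $z\in\operatorname*{conv}U$, while the case $c(z)=0$ is covered by $S\subset\operatorname*{conv}U$ from (ii). Parts (iv) and (v) are then direct specializations of Proposition~\ref{dc FP}. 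For (iv), since $\operatorname*{bar}C=\operatorname*{dom}\sigma_C\subset S^{\perp}$ by (ii), the summand $\iota_{S^{\perp}}$ in $\varphi_L=\tfrac14\sigma_C^2+\iota_{S^{\perp}}$ is redundant, so $\varphi_L=\tfrac14\sigma_C^2$ and $\operatorname*{dom}\varphi_L=\operatorname*{bar}C$; the $2$-homogeneity and linearity of the domain come from the fact that $\sigma_C$ is an extended seminorm. For (v), I would begin from the expression $\psi_L=(\operatorname*{cl}_{w\times w^{\ast}}\mu)^2$ with $\mu=p_C\,\square\,\iota_{\operatorname*{cl}_{w\times w^{\ast}}S}$ established in the proof of Proposition~\ref{dc FP}(iv); because $\operatorname*{cl}_{w\times w^{\ast}}S\subset C_\infty=[p_C=0]$ and $p_C$ is sublinear, $p_C(z-m)=p_C(z)$ for all such $m$, so $\mu=p_C$ and $\psi_L=p_C^2$. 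The identities $C=[\psi_L\le1]$, $C_\infty=[\psi_L=0]$, the $2$-homogeneity, and $\operatorname*{dom}\psi_L=\mathbb{R}C$ are then standard gauge facts for the symmetric closed convex set $C$.

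For (vi) and (vii) the key input is that $L$ monotone gives $\psi_L\ge c$ by (\ref{r2}); hence $c\le0$ on $C_\infty=[\psi_L=0]$. As $C$ is symmetric and closed, $C_\infty$ is a $w\times w^{\ast}$-closed linear subspace, and so is its image $-C_\infty$ under the $w\times w^{\ast}$-homeomorphism $(x,x^{\ast})\mapsto(x,-x^{\ast})$. Since for $(x,x^{\ast})\in -C_\infty$ one has $(x,-x^{\ast})\in C_\infty$ and $c(x,-x^{\ast})=-c(x,x^{\ast})$, the bound $c\le0$ on $C_\infty$ becomes $c\ge0$ on $-C_\infty$, i.e.\ $-C_\infty$ is monotone, which proves (vi). For (vii), if $L$ is $w\times w^{\ast}$-closed then (v) gives $C_\infty=[\psi_L=0]\subset\operatorname*{dom}\psi_L\subset\operatorname*{cl}_{w\times w^{\ast}}L=L$; any $w\in C_\infty$ thus lies in $L$, so $c(w)\ge0$ by monotonicity, while $c(w)\le0$ from above, forcing $w\in S$. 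Hence $C_\infty\subset S$, and combined with $S\subset C_\infty$ from (ii) this gives $C_\infty=S$, so $C_\infty$ is skew and $S$ is $w\times w^{\ast}$-closed.

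The steps I expect to demand the most care are the collapse of the inf-convolution in (v) --- which genuinely uses $S\subset C_\infty=[p_C=0]$ together with sublinearity of $p_C$ --- and keeping the sign convention for the involution $-C_\infty$ straight in (vi); every remaining assertion is a routine reduction to the double-cone computations of Proposition~\ref{dc FP}.
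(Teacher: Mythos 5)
Your proposal is correct, and its skeleton is the paper's: specialize Proposition \ref{dc FP} to $D=L$, use $L\subset S^{\perp}$ (the closing assertion of Proposition \ref{dc mon}) to settle (i)--(iii), observe that $\operatorname*{bar}C\subset S^{\perp}$ makes the summand $\iota_{S^{\perp}}$ redundant in (iv), and derive (vi)--(vii) from $\psi_{L}\geq c$ together with the closedness of $C_{\infty}$. Two steps are carried out by different (equally valid) mechanisms. For $\operatorname*{bar}C\subset S^{\perp}$ the paper computes $\operatorname*{cl}(\operatorname*{bar}C)=\partial\iota_{C_{\infty}}(0)=(C_{\infty})^{\perp}$ via the recession-function identity $f_{\infty}=\sigma_{\operatorname*{dom}f^{\ast}}$, whereas you use the elementary fact that a linear functional bounded above on the subspace $S$ $(\subset C_{\infty}\subset C)$ must vanish on it; your argument is more elementary and self-contained. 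For $\psi_{L}=p_{C}^{2}$ the paper simply conjugates the already simplified $\varphi_{L}=\tfrac{1}{4}\sigma_{C}^{2}$, reusing the computation $(\tfrac{1}{4}\sigma_{C}^{2})^{\ast}=p_{C}^{2}$ from the proof of Proposition \ref{dc FP}(iv), whereas you retain the inf-convolution formula $\psi_{L}=(\operatorname*{cl}_{w\times w^{\ast}}(p_{C}\,\square\,\iota_{\operatorname*{cl}_{w\times w^{\ast}}S}))^{2}$ and show it collapses because $\operatorname*{cl}_{w\times w^{\ast}}S\subset C_{\infty}=[p_{C}=0]$ and $p_{C}$ is an even, lsc, sublinear functional; the paper's route is shorter, yours makes explicit why the skew part drops out. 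The remaining deviations are cosmetic: your symmetric decomposition $z=\tfrac{1+\alpha}{2}\zeta(z)+\tfrac{1-\alpha}{2}(-\zeta(z))$ in (iii) replaces the paper's convex combination with $0$, your use of Proposition \ref{dc mon}(b) in (i) replaces the paper's direct appeal to $L\subset S^{\perp}$, and your derivation of $S\subset C_{\infty}$ from $C+S\subset C$ replaces the paper's observation that $S$ is a cone contained in $C$.
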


\begin{proof} (i) From Proposition \ref{dc mon} we have that $L\subset S^{\perp}$.
Consider $z,z^{\prime}\in S$; then $z+z^{\prime}\in L$ and $c(z+z^{\prime})=c(z)+z\cdot z^{\prime}+c(z^{\prime})=0$,
and so $z+z^{\prime}\in S$. Since $S$ is a double-cone, it follows
that $S$ is linear.

(ii) Because $0\in S$, clearly $U\subset S+U$. Let $z_{0}\in S$
and $z\in U$. Since $z\in L\subset S^{\perp}$, we have that $c(z_{0}+z)=c(z_{0})+z_{0}\cdot z+c(z)=c(z)=1$;
hence $z_{0}+z\in U$. It follows that $S+U\subset U$, whence $S+U=U$.
Similarly, $S+P=P$.

Fix $z_{1}\in U$; hence $-z_{1}\in U$. Then for $z_{0}\in S$ we
have that $z_{0}+z_{1},z_{0}-z_{1}\in U$, and so $z_{0}=\tfrac{1}{2}(z_{0}+z_{1})+\tfrac{1}{2}(z_{0}-z_{1})\in\operatorname*{conv}U$.
Hence $S\subset\operatorname*{conv}U\subset C$. Since $C$ is a closed
convex set containing $0$ we obtain that $S\subset C_{\infty}$.

Because for $f\in\Gamma(E)$ one has $f_{\infty}=\sigma_{\operatorname*{dom}f^{\ast}}$,
we have that $\operatorname*{cl}(\operatorname*{dom}f^{\ast})=\partial f_{\infty}(0)$
(see e.g.\ \cite[Exer.\ 2.23]{Za-book}). Taking $f=\iota_{C}$,
we obtain that $\operatorname*{cl}(\operatorname*{bar}C)=\operatorname*{cl}_{w\times w^{\ast}}(\operatorname*{dom}\sigma_{C})=\partial\iota_{C_{\infty}}(0)=(C_{\infty})^{\perp}$.
Hence $\operatorname*{bar}U=\operatorname*{bar}C\subset(C_{\infty})^{\perp}\subset S^{\perp}$.

(iii) Let $z\in L$ be such that $t:=c(z)\leq1$. If $t=0$ then $z\in S\subset\operatorname*{conv}U$.
If $t>0$ then $z^{\prime}:=t^{-1/2}z\in U\subset\operatorname*{conv}U$,
and so $z=(1-t^{1/2})0+t^{1/2}z^{\prime}\in\operatorname*{conv}U$.
Hence $[c_{L}\leq1]\subset\operatorname*{conv}U$. Since $c_{L}$
is convex and $U\subset\lbrack c_{L}\leq1]$ we also have $\operatorname*{conv}U\subset\lbrack c_{L}\leq1]$.

(iv) \&~(v) From (ii) we see that $\operatorname*{dom}\sigma_{C}\subset S^{\perp}$;
hence Proposition \ref{dc FP} gives $\varphi_{L}=\frac{1}{4}\sigma_{C}^{2}+\iota_{S^{\perp}}=\frac{1}{4}\sigma_{C}^{2}$,
and so $\psi_{L}=p_{C}^{2}$, $[\psi_{L}\le1]=[p_{C}\le1]=C$, $[\psi_{L}=0]=[p_{C}=0]=C_{\infty}$.
The remaining properties follow from Proposition \ref{dc FP}, too.

(vi) The set $C_{\infty}$ is linear since $C$ is symmetric. Also,
$C_{\infty}=[\psi_{L}=0]$ is dissipative, that is, $C_{\infty}\subset\lbrack c\leq0]$
since $\psi_{L}\geq c$; hence $-C_{\infty}$ is also monotone.

(vii) Assume that $L$ is $w\times w^{*}$-closed. Then $C_{\infty}\subset C\subset L$,
and so $C_{\infty}$ is monotone. Using (vi) we obtain that $C_{\infty}$
is skew, which implies $C_{\infty}\subset S$. From (ii) we get $C_{\infty}=S$.
\end{proof}

\strut

We complete Theorem \ref{lin FP} with the following result.

\begin{proposition} \label{lin psi} Let $L\subset Z$ be a linear
monotone subspace with skew part $S$, nonempty positive part $P$,
unitary part $U$, and crown $C$. Then

\emph{(i)} $[\psi_{L}=c]$ is linear and monotone, $C$ is not a linear
space and $C^{\perp}=L^{\perp}$; moreover, \[
\lbrack\psi_{L}=c]=\{z\mid c(z)\geq0,\ z\in\sqrt{c(z)}\cdot C\},\]
 where $0\cdot C:=C_{\infty}$, and \[
\lbrack\varphi_{L}=c]=\operatorname*{Skew}(L^{\perp})\cup\zeta^{-1}([\sigma_{C}=2])=\operatorname*{Skew}(C^{\perp})\cup\zeta^{-1}([\sigma_{C}=2]).\]

\emph{(ii)} $\operatorname*{Skew}([\psi_{L}=c])=\operatorname*{Skew}(C_{\infty})$
and $\operatorname*{Skew}([\psi_{L}=c])$ is a $w\times w^{\ast}-$closed
(skew) and linear subspace, $\operatorname*{Pos}([\psi_{L}=c])=\zeta^{-1}(C)$,
$\operatorname*{Unit}([\psi_{L}=c])=\operatorname*{Unit}(C)$, and
$\operatorname*{Crown}([\psi_{L}=c])=C$;

\emph{(iii)} $[\psi_{L}=c]\subset\operatorname*{dom}\psi_{L}=\mathbb{R}C\subset\operatorname*{cl}_{w\times w^{*}}L\subset S_{[\psi_{L}=c]}^{\perp}$,
$\varphi_{[\psi_{L}=c]}=\frac{1}{4}\sigma_{C}^{2}$, $\psi_{[\psi_{L}=c]}=p_{C}^{2}$.
\end{proposition}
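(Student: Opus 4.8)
The plan is to write $K:=[\psi_{L}=c]$ and to harvest as much as possible from Theorem \ref{lin FP} and Proposition \ref{caracter-NI} before attacking the one substantial point. Since $L\in\mathcal{M}(X)$, the set $K$ is the smallest representable extension of $L$, so Proposition \ref{caracter-NI}(i) gives $\varphi_{K}=\varphi_{L}=\frac{1}{4}\sigma_{C}^{2}$ and $\psi_{K}=\psi_{L}=p_{C}^{2}$, which is the bulk of (iii). The inclusions $K\subset\operatorname*{dom}\psi_{L}=\mathbb{R}C\subset\operatorname*{cl}_{w\times w^{*}}L$ are Theorem \ref{lin FP}(v), and once $K$ is known to be a monotone non-negative double-cone the last inclusion $\operatorname*{cl}_{w\times w^{*}}L\subset S_{K}^{\perp}$ follows from Proposition \ref{dc mon} (which gives $K\subset S_{K}^{\perp}$, a $w\times w^{*}$-closed set containing $L$). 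That $K$ is monotone is immediate from $\psi_{L}\in\mathcal{F}$ and the quoted fact $[f=c]\in\mathcal{M}(X)$ for $f\in\mathcal{F}$; that $K$ is a double-cone comes from the $2$-homogeneity of $\psi_{L}$ and $c$ in Theorem \ref{lin FP}(v).

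For (ii) I would read off the parts of $K$ from $\psi_{L}=p_{C}^{2}\geq c$. As $K=[\psi_{L}\leq c]\subset[c\geq0]$, one has $\operatorname*{Skew}(K)=[\psi_{L}=0]\cap[c=0]=C_{\infty}\cap[c=0]=\operatorname*{Skew}(C_{\infty})$, using $C_{\infty}=[\psi_{L}=0]$ from Theorem \ref{lin FP}(v). This is a skew, $w\times w^{*}$-closed linear subspace: $-C_{\infty}$ is a $w\times w^{*}$-closed monotone linear subspace by Theorem \ref{lin FP}(vi), so Theorem \ref{lin FP}(i),(vii) applied to $-C_{\infty}$ make its skew part $\operatorname*{Skew}(C_{\infty})$ a $w\times w^{*}$-closed linear subspace, and Proposition \ref{skewdc-char} makes it skew. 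Next, $z\in\operatorname*{Pos}(K)$ iff $c(z)>0$ and $p_{C}(z)=\sqrt{c(z)}$, i.e.\ $\zeta(z)\in[p_{C}\leq1]=C$, whence $\operatorname*{Pos}(K)=\zeta^{-1}(C)$; the unitary claim is the case $c(z)=1$, and since $U_{L}\subset\operatorname*{Unit}(C)$ one gets $\operatorname*{Crown}(K)=\operatorname*{cl}_{w\times w^{*}}\operatorname*{conv}\operatorname*{Unit}(C)=C$.

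Part (i), apart from linearity, is then bookkeeping. The relation $\psi_{L}=p_{C}^{2}\geq c$ yields the stated description of $[\psi_{L}=c]$ (splitting on $c(z)=0$ versus $c(z)>0$, with $0\cdot C:=C_{\infty}$), and $\varphi_{L}=\frac{1}{4}\sigma_{C}^{2}\geq0$ gives $[\varphi_{L}=c]\subset[c\geq0]$ with skew piece $[\varphi_{L}=0]\cap[c=0]=L^{\perp}\cap[c=0]$ and positive piece $\zeta^{-1}([\sigma_{C}=2])$. The identity $C^{\perp}=L^{\perp}$ is Proposition \ref{dc FP}(vi) applied with $D=L$ (there $[\varphi_{L}=0]=L^{\perp}$ and $[\sigma_{C}=0]=C^{\perp}$, and these coincide because $\varphi_{L}=\frac{1}{4}\sigma_{C}^{2}$), while $C$ is not a linear space since any $u\in U_{L}$ has $p_{C}(u)=1\neq0$.

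The main obstacle is the \emph{linearity} of $K$. As $K$ is a double-cone containing $0$, it suffices to show $K+K\subset K$, which I would attempt through the decomposition $K=\operatorname*{Skew}(K)\cup\operatorname*{Pos}(K)$ from (ii). If $z_{1}\in\operatorname*{Skew}(K)$ and $z_{2}\in K$, then $tz_{1}\in K$ for all $t\in\mathbb{R}$, and monotonicity of $K$ forces $-t(z_{1}\cdot z_{2})+c(z_{2})\geq0$ for every $t$, hence $z_{1}\cdot z_{2}=0$; since $z_{1}\in C_{\infty}$ and $C+C_{\infty}=C$, this gives $z_{1}+z_{2}\in\operatorname*{Skew}(C_{\infty})\cup\zeta^{-1}(C)=K$, so the skew-plus-anything cases close. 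The genuinely hard case is $z_{1},z_{2}\in\operatorname*{Pos}(K)$, where the required $\psi_{L}(z_{1}+z_{2})=c(z_{1}+z_{2})$ cannot be extracted from the gauge triangle inequality alone: that estimate only yields $\psi_{L}(z_{1}+z_{2})\leq(\sqrt{c(z_{1})}+\sqrt{c(z_{2})})^{2}$, which overshoots $c(z_{1}+z_{2})$ by the Cauchy--Schwarz defect $2\sqrt{c(z_{1})c(z_{2})}-z_{1}\cdot z_{2}$. Equivalently one must prove $K$ convex, after which a convex double-cone through $0$ is automatically linear (cf.\ Proposition \ref{aff-conv}). I expect this to force use of the defining formula $\psi_{L}=\operatorname*{cl}_{w\times w^{*}}\operatorname*{conv}c_{L}$ — approximating points of $K$ by elements of $L$, on which $\psi_{L}=c$ exactly — together with the conjugacy $\psi_{L}=\varphi_{L}^{\square}$, rather than any soft convexity bound; this is precisely the step where the special structure of the crown of a \emph{linear} monotone subspace, as opposed to that of an arbitrary symmetric $C$, must be exploited.
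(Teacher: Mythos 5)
There is a genuine gap, and it sits at the heart of the proposition: you never prove that $[\psi_{L}=c]$ is linear. Your reduction of $K+K\subset K$ via the decomposition $K=\operatorname*{Skew}(K)\cup\operatorname*{Pos}(K)$ correctly disposes of the cases involving a skew summand (the orthogonality argument and $C+C_{\infty}=C$ are fine), but the case $z_{1},z_{2}\in\operatorname*{Pos}(K)$ --- which is the whole difficulty --- is explicitly left open, with only a conjecture about which tools "should" close it. Everything else in your write-up (parts (ii), (iii), the two displayed formulas, $C^{\perp}=L^{\perp}$, the non-linearity of $C$) is correct and matches what the paper itself extracts from Theorem \ref{lin FP}, Proposition \ref{dc FP} and Proposition \ref{caracter-NI} (i); but those parts are routine once linearity of $K$ is in hand, so as written your argument proves strictly less than the statement.

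Your diagnosis of what is needed is accurate, and the paper realizes it with a trick you did not find. Fix $z^{\prime}\in L$ and consider $\eta_{z^{\prime}}(z):=\psi_{L}(z+z^{\prime})-z\cdot z^{\prime}-c(z^{\prime})$, a proper lsc convex function (a translate of $\psi_{L}$ minus a continuous affine function). For $z\in L$ one has $z+z^{\prime}\in L\subset[\psi_{L}=c]$ and $c(z+z^{\prime})=c(z)+z\cdot z^{\prime}+c(z^{\prime})$, so $\eta_{z^{\prime}}(z)=c(z)$; hence $\eta_{z^{\prime}}\leq c_{L}$ on all of $Z$. Since $\psi_{L}=\operatorname*{cl}_{w\times w^{\ast}}\operatorname*{conv}c_{L}$ is the \emph{greatest} lsc convex minorant of $c_{L}$, this forces $\eta_{z^{\prime}}\leq\psi_{L}$, i.e.
\[
\psi_{L}(z+z^{\prime})\leq\psi_{L}(z)+z\cdot z^{\prime}+c(z^{\prime})\qquad\forall z\in Z,\ \forall z^{\prime}\in L,
\]
whence $[\psi_{L}=c]+L\subset[\psi_{L}\leq c]=[\psi_{L}=c]$ (using $\psi_{L}\geq c$). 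Now repeat the construction with $z^{\prime}\in[\psi_{L}=c]$: the inequality just established, applied with the roles of the two variables exchanged, again gives $\eta_{z^{\prime}}\leq c_{L}$, hence $\eta_{z^{\prime}}\leq\psi_{L}$, and therefore $[\psi_{L}=c]+[\psi_{L}=c]\subset[\psi_{L}=c]$; combined with the two-homogeneity of $\psi_{L}$ and $c$ this yields linearity. Note that this argument needs neither the Skew/Pos case split, nor the gauge $p_{C}$, nor convexity of $K$ as an intermediate step: the Cauchy--Schwarz defect you ran into is bypassed entirely by exploiting the maximality of $\psi_{L}$ among lsc convex minorants of $c_{L}$, rather than any pointwise estimate on $p_{C}$.
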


\begin{proof} (i) To prove that $[\psi_{L}=c]$ is linear it suffices
to show that $z+z^{\prime}\in\lbrack\psi_{L}=c]$, for every $z,z^{\prime}\in\lbrack\psi_{L}=c]$.
To this end fix $z^{\prime}\in L$. Then the function $\eta_{z^{\prime}}:Z\rightarrow\mathbb{R}\cup\{\infty\}$
defined by\[
\eta_{z^{\prime}}(z)=\psi_{L}(z+z^{\prime})-z\cdot z^{\prime}-c(z^{\prime})\quad(z\in Z),\]
 is proper convex and lsc. For $z\in L$ we have $z+z^{\prime}\in L$,
and so $\psi_{L}(z+z^{\prime})=c(z+z^{\prime})=c(z)+z\cdot z^{\prime}+c(z^{\prime})$.
It follows that $\eta_{z^{\prime}}\leq c_{L}$. This yields\[
\psi_{L}(z+z^{\prime})\leq\psi_{L}(z)+z\cdot z^{\prime}+c(z^{\prime})\quad\forall z\in Z,\ \forall z^{\prime}\in L.\]
 Hence if $z\in\lbrack\psi_{L}=c]$ then $z+z^{\prime}\in\lbrack\psi_{L}\leq c]=[\psi_{L}=c]$.
We proved $z+z^{\prime}\in\lbrack\psi_{L}=c]$ for all $z\in\lbrack\psi_{L}=c]$
and $z^{\prime}\in L$. Now consider the function $\eta_{z^{\prime}}$
from above with $z^{\prime}\in\lbrack\psi_{L}=c]$ and repeat the
argument to conclude that $[\psi_{L}=c]$ is linear.

The formula for $[\psi_{L}=c]$ follows from $\psi_{L}=p_{C}^{2}$
and Theorem \ref{lin FP} (v). Since $\varphi_{L}=\tfrac{1}{4}\sigma_{C}^{2}\geq0$
(and using Proposition \ref{dc FP} (vi)), we have that \[
\lbrack\varphi_{L}=c]=\left([\sigma_{C}=0]\cap\lbrack c=0]\right)\cup\left(\lbrack\sigma_{C}^{2}=4c]\cap\lbrack c>0]\right)=\operatorname*{Skew}(C^{\perp})\cup\zeta^{-1}([\sigma_{C}=2]).\]

By Theorem \ref{lin FP} (ii) we have $S\subset C,$ whence $C^{\perp}\subset S^{\perp};$
using Proposition \ref{dc FP} (vi) we get $L^{\perp}=C^{\perp}\cap S^{\perp}=C^{\perp}.$

Assume that $C$ is linear. Then $C=C_{\infty}=[\psi_{L}=0].$ It
follows that $c(z)\leq\psi_{L}(z)=0$ for every $z\in C,$ and so
we get the contradiction $1=c(z)\leq0$ for every $z\in U$ $(\subset C).$

(ii) From Theorem \ref{lin FP} (v) we get \[
\operatorname*{Skew}([\psi_{L}=c])=[\psi_{L}=c]\cap\lbrack c=0]=[\psi_{L}=0]\cap\lbrack c=0]=C_{\infty}\cap\lbrack c=0]=\operatorname*{Skew}(C_{\infty}).\]
 One obtains similarly the other equalities. Because $-C_{\infty}$
is a $w\times w^{\ast}$--closed monotone linear subspace, using Theorem
\ref{lin FP} (vii) we obtain that $\operatorname*{Skew}(C_{\infty})=-\operatorname*{Skew}(-C_{\infty})$
is $w\times w^{\ast}$--closed and linear.

(iii) All the facts are consequences of Theorem \ref{lin FP}, Proposition
\ref{dc mon}, (i), and Proposition \ref{caracter-NI} (i). \end{proof}

\strut

In the sequel we discuss the representability of a linear subspace.
Note that the smallest representable operator that contains a monotone
double-cone $D$, namely $[\psi_{D}=c]$, is a (monotone) double-cone.
In this case it makes sense to talk about the skew, positive, and
unitary parts as well as the crown of $[\psi_{D}=c]$.

\begin{proposition} \label{linrep skewclosed}If $L\subset Z$ is
linear and representable then $S_{L}$ is a ${w\times w^{*}}$-closed
skew linear subspace. \end{proposition}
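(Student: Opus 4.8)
The plan is to read the statement off the representability characterization together with Proposition \ref{lin psi}(ii). First I would unwind what representability gives: by Theorem \ref{caracter-max}(i), $L$ being representable means exactly that $L\in\mathcal{M}(X)$ and $L=[\psi_L=c]$. Hence $L$ is a linear monotone (equivalently, non-negative) subspace, so that the skew, positive and unitary parts and the crown of $L$ are well defined, and its skew part satisfies $S_L=\operatorname*{Skew}(L)=\operatorname*{Skew}([\psi_L=c])$.

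In the principal case, where the positive part $P_L$ is nonempty, Theorem \ref{lin FP} and Proposition \ref{lin psi} apply to $L$ with crown $C$. Proposition \ref{lin psi}(ii) states precisely that $\operatorname*{Skew}([\psi_L=c])=\operatorname*{Skew}(C_\infty)$ is a $w\times w^{*}$-closed skew linear subspace. Combining this with the identity $S_L=\operatorname*{Skew}([\psi_L=c])$ from the first step yields the assertion immediately. I would treat the degenerate case $P_L=\emptyset$ separately: there $L=S_L$ is already a skew linear subspace, and Corollary \ref{skewmondc} shows that a skew monotone double-cone is representable iff it coincides with $\operatorname*{cl}_{w\times w^{*}}(\operatorname*{lin}L)$; for the linear set $L$ this reads $L=\operatorname*{cl}_{w\times w^{*}}L$, so $S_L=L$ is $w\times w^{*}$-closed, skew and linear.

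Since the closedness is already established inside Proposition \ref{lin psi}(ii), no genuinely new computation is required; the argument is essentially a bookkeeping step that transports that closedness to $S_L$ via $L=[\psi_L=c]$. The only delicate point, which I would not need to redo, is exactly that closedness: because the bilinear coupling $c$ is not jointly $w\times w^{*}$-continuous, one cannot conclude that $\operatorname*{Skew}(C_\infty)=C_\infty\cap[c=0]$ is closed merely by intersecting the $w\times w^{*}$-closed asymptotic cone $C_\infty$ with $[c=0]$. Instead the closedness is obtained by applying Theorem \ref{lin FP}(vii) to the $w\times w^{*}$-closed monotone linear subspace $-C_\infty$ (see Theorem \ref{lin FP}(vi)) and using the relation $\operatorname*{Skew}(C_\infty)=-\operatorname*{Skew}(-C_\infty)$.
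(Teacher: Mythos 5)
Your proof is correct, but it follows a genuinely different route from the paper's. You reduce the statement to Proposition \ref{lin psi}(ii) via the identity $S_{L}=L\cap[c=0]=[\psi_{L}=c]\cap[c=0]=\operatorname*{Skew}([\psi_{L}=c])$, which is legitimate since Proposition \ref{lin psi} precedes this proposition and does not depend on it; and you rightly split off the case $P_{L}=\emptyset$, which your route forces because Proposition \ref{lin psi} assumes a nonempty positive part (Corollary \ref{skewmondc} indeed settles that case). The paper instead argues directly and without any case distinction: $S_{L}$ is a skew linear subspace (Theorem \ref{lin FP}(i), trivially so when $P_{L}=\emptyset$); by Proposition \ref{skewdc-char} its closure $\operatorname*{cl}_{w\times w^{*}}S_{L}$ is skew; and since
\[
\iota_{\operatorname*{cl}_{w\times w^{*}}S_{L}}=\psi_{S_{L}}\geq\psi_{L}\geq c
\]
(the equality from Corollary \ref{skewmondc}, the middle inequality from $S_{L}\subset L$, the last from monotonicity of $L$), every $z\in\operatorname*{cl}_{w\times w^{*}}S_{L}$ satisfies $c(z)=0$ and $\psi_{L}(z)\leq0$, whence $z\in[\psi_{L}=c]\cap[c=0]=L\cap[c=0]=S_{L}$, so $S_{L}$ is closed. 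As for what each approach buys: the paper's argument is shorter and self-contained, needing only the fact that the closure of a skew space is skew plus an elementary Penot-function comparison; yours is essentially bookkeeping on top of the heavier Proposition \ref{lin psi}(ii), whose proof carries the real content (the asymptotic-cone argument of Theorem \ref{lin FP}(vi)--(vii)). Your closing remark, that one cannot obtain closedness of $\operatorname*{Skew}(C_{\infty})$ merely by intersecting $C_{\infty}$ with $[c=0]$ since $c$ is not jointly $w\times w^{*}$-continuous, shows you located that content correctly.
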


\begin{proof} We know that $S_{L}$ is a skew linear subspace from
Theorem \ref{lin FP}. Hence, by Proposition \ref{skewdc-char}, $\operatorname*{cl}_{w\times w^{\ast}}S_{L}$
is skew and $\iota_{\operatorname*{cl}_{w\times w^{\ast}}S_{L}}=\psi_{S_{L}}\geq\psi_{L}\geq c$
because $L$ is monotone. This yields $\operatorname*{cl}_{w\times w^{\ast}}S_{L}\subset\lbrack\psi_{L}=c]\cap\lbrack c=0]=L\cap\lbrack c=0]=S_{L}$.
\end{proof}

\strut

Let us define the \emph{$w\times w^{\ast}-$natural convergence} ($\nu$
for short) of nets in $Z$ as \[
z_{i}\rightarrow^{\nu}z\Longleftrightarrow[c(z_{i})\leq c(z)\ \forall i\in I,\ \mathrm{and}\ z_{i}\rightarrow z,\ w\times w^{\ast}\ \mathrm{in}\ Z],\]
 and the \emph{$w\times w^{\ast}-$}natural closure of a subset $A\subset Z$,
denoted by $\operatorname*{cl}_{\nu}A:=\{z\in Z\mid\exists\ (z_{i})_{i}\subset A:z_{i}\rightarrow^{\nu}z\}$.
A set $A\subset Z$ is called $\nu-$\emph{closed} if $A=\operatorname*{cl}_{\nu}A$.
Similarly, a function $f:Z\rightarrow\overline{\mathbb{R}}$ is $\nu-$lsc
if $f(z)\leq\lim_{i}f(z_{i})$ whenever $z_{i}\rightarrow^{\nu}z$
($z_{i},z\in Z$).

Observe that the $w\times w^{*}-$natural convergence introduced above
is different of the convergence induced by the natural topology defined
by Penot in \cite{Penot:08}.

Note that for every $T\in\mathcal{M}(X)$, $[\psi_{T}=c]$ is $\nu$--closed;
in particular every representable operator is $\nu$--closed. Indeed,
let $(z_{i})_{i\in I}\subset\lbrack\psi_{T}=c]$ such that $z_{i}\rightarrow^{\nu}z\in Z$.
Then\[
\psi_{T}(z)\leq\liminf_{i\in I}\psi_{T}(z_{i})\leq\limsup_{i\in I}\psi_{T}(z_{i})=\limsup_{i\in I}c_{T}(z_{i})\leq c(z),\]
 which proves that $z\in\lbrack\psi_{T}=c]$. Moreover $\psi_{T}(z)=\lim_{i}\psi_{T}(z_{i})$
and similarly $f(z)=\lim_{i}f(z_{i})$ for every $f\in\mathcal{R}_{T}$
and $(z_{i})_{i\in I}\subset\lbrack f=c]$ with $z_{i}\rightarrow^{\nu}z$.
The following theorem shows that a monotone linear subspace $L$ of
$Z$ is representable iff $L$ is $\nu-$closed.

\begin{theorem} \label{lin repres}Let $L\subset Z$ be a linear
monotone subspace with nonempty positive part. TFAE:

\emph{(i)} $L$ is representable,

\emph{(ii)} $\operatorname*{Unit}([\psi_{L}=c])\subset L$,

\emph{(iii)} $c_{L}$ is $\nu$--lsc,

\emph{(iv)} $L$ is $\nu$--closed. \end{theorem}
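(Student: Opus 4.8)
The plan is to prove the cyclic chain (i) $\Rightarrow$ (iii) $\Rightarrow$ (iv) $\Rightarrow$ (ii) $\Rightarrow$ (i). Throughout I would use that, by Theorem~\ref{caracter-max}(i), $L$ is representable iff $L=[\psi_L=c]$, together with the explicit data from Theorem~\ref{lin FP}: $\psi_L=p_C^2$, $C=[\psi_L\le1]$, $C_\infty=[\psi_L=0]$ (v), $\operatorname*{conv}U=[c_L\le1]$ (iii), and $\psi_L\ge c$ (monotonicity). From Proposition~\ref{lin psi}(ii) I would take the descriptions $\operatorname*{Skew}([\psi_L=c])=\operatorname*{Skew}(C_\infty)$, $\operatorname*{Pos}([\psi_L=c])=\zeta^{-1}(C)$, and $\operatorname*{Unit}([\psi_L=c])=\operatorname*{Unit}(C)$. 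Recall also $L\subset[\psi_L=c]$ always, and that $[\psi_L=c]$ is monotone and linear, hence a non-negative double-cone equal to $\operatorname*{Skew}([\psi_L=c])\cup\operatorname*{Pos}([\psi_L=c])$.

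The three routine implications go as follows. For (i) $\Rightarrow$ (iii), if $L=[\psi_L=c]$ then $c_L=c=\psi_L$ on $L$; given $z_i\rightarrow^{\nu}z$ with $\liminf_i c_L(z_i)<\infty$ I may pass to a subnet lying in $L$, so $c_L(z_i)=\psi_L(z_i)=c(z_i)$, and the $w\times w^{*}$-lower semicontinuity of $\psi_L$ together with $c(z_i)\le c(z)$ and $\psi_L\ge c$ squeezes $\psi_L(z)\le\liminf_i c(z_i)\le c(z)\le\psi_L(z)$, forcing $z\in[\psi_L=c]=L$ and $c_L(z)=\liminf_i c_L(z_i)$, i.e.\ $c_L$ is $\nu$-lsc. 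For (iii) $\Rightarrow$ (iv), if $z\in\operatorname*{cl}_{\nu}L$ via $z_i\in L$, $z_i\rightarrow^{\nu}z$, then $c_L(z)\le\liminf_i c_L(z_i)=\liminf_i c(z_i)\le c(z)<\infty$, so $z\in\operatorname*{dom}c_L=L$. For (iv) $\Rightarrow$ (ii), any $z\in\operatorname*{Unit}(C)=\operatorname*{Unit}([\psi_L=c])$ is a $w\times w^{*}$-limit of a net $w_i\in\operatorname*{conv}U=[c_L\le1]\subset L$; since $c(w_i)\le1=c(z)$ one has $w_i\rightarrow^{\nu}z$, whence $z\in L$ by $\nu$-closedness.

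The main step, and the only delicate one, is (ii) $\Rightarrow$ (i), where I must show $[\psi_L=c]\subset L$. The positive part is immediate: if $z\in\operatorname*{Pos}([\psi_L=c])=\zeta^{-1}(C)$ then $\zeta(z)\in\operatorname*{Unit}(C)\subset L$ by (ii), so $z=\sqrt{c(z)}\,\zeta(z)\in L$. The hard part is the skew part $\operatorname*{Skew}([\psi_L=c])=\operatorname*{Skew}(C_\infty)$, which in general strictly exceeds $S$ (equality requiring $w\times w^{*}$-closedness of $L$, cf.\ Theorem~\ref{lin FP}(vii)); this is precisely the obstacle. To absorb it I would fix $u\in U$ (nonempty, as $P\ne\emptyset$) and $z\in\operatorname*{Skew}(C_\infty)$. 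Since $C_\infty$ is the linear asymptotic cone of $C$, one has $u+\lambda z\in C$ for every $\lambda\in\mathbb{R}$; because $c\le\psi_L\le1$ on $C=[\psi_L\le1]$ and $c(z)=0$, the inequality $c(u+\lambda z)=1+\lambda(u\cdot z)\le1$ for all $\lambda$ forces the orthogonality $u\cdot z=0$, whence $c(u+z)=1$ and $u+z\in\operatorname*{Unit}(C)$. By (ii) then $u+z\in L$, so $z=(u+z)-u\in L$. This gives $\operatorname*{Skew}(C_\infty)\subset L$, hence $[\psi_L=c]=\operatorname*{Skew}(C_\infty)\cup\zeta^{-1}(C)\subset L$ and finally $L=[\psi_L=c]$. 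The crux is exactly this computation $u\cdot z=0$, which converts a recession (skew) direction of the crown into an honest unit vector that (ii) returns into $L$.
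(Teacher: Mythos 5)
Your proof is correct, and three of the four implications --- (i) $\Rightarrow$ (iii), (iii) $\Rightarrow$ (iv), and (iv) $\Rightarrow$ (ii) --- coincide with the paper's arguments: the same squeeze via $w\times w^{*}$-lower semicontinuity of $\psi_{L}$ and $\psi_{L}\geq c$, and the same use of $\operatorname*{conv}U=[c_{L}\leq1]$ (Theorem \ref{lin FP}(iii)) together with $C=\operatorname*{cl}_{w\times w^{\ast}}(\operatorname*{conv}U)$ to turn a unit point of $[\psi_{L}=c]$ into a $\nu$-limit of points of $L$. The genuine divergence is in (ii) $\Rightarrow$ (i), and only on the skew part (your treatment of the positive part, rescaling by $\zeta$, is exactly the paper's). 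The paper disposes of the skew part in one line: by Proposition \ref{lin psi}(i) the set $[\psi_{L}=c]$ is itself a linear monotone subspace, so Theorem \ref{lin FP}(ii) applied to \emph{it} gives $\operatorname*{Skew}([\psi_{L}=c])\subset\operatorname*{conv}(\operatorname*{Unit}([\psi_{L}=c]))\subset L$, the last inclusion by hypothesis (ii) and convexity of $L$. You instead re-derive the needed orthogonality from scratch: for $u\in U$ and $z\in\operatorname*{Skew}(C_{\infty})$ you use $u+\mathbb{R}z\subset C$ (recession property of the closed convex symmetric crown, available in the paper as $C+C_{\infty}=C$ with $C_{\infty}$ linear) and $c\leq\psi_{L}\leq1$ on $C=[\psi_{L}\leq1]$ to force $u\cdot z=0$, hence $u+z\in\operatorname*{Unit}(C)=\operatorname*{Unit}([\psi_{L}=c])\subset L$ and $z=(u+z)-u\in L$. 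Both are valid; the paper's route is shorter because the structural fact $S\subset\operatorname*{conv}U$ was already proved for an arbitrary linear monotone subspace (its proof is, at bottom, the same orthogonality computation you perform, obtained there from $M\subset S_{M}^{\perp}$ rather than from the recession structure of the crown), whereas your route is self-contained at this point and makes explicit why recession directions of the crown are orthogonal to unit vectors --- though you still rely on Proposition \ref{lin psi}(ii) for the identifications $\operatorname*{Skew}([\psi_{L}=c])=\operatorname*{Skew}(C_{\infty})$, $\operatorname*{Pos}([\psi_{L}=c])=\zeta^{-1}(C)$ and $\operatorname*{Unit}([\psi_{L}=c])=\operatorname*{Unit}(C)$, so the dependence on the preparatory results is comparable.
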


\begin{proof} The implication (i) $\Rightarrow$ (ii) is straightforward.

(ii) $\Rightarrow$ (i) Let $z\in\lbrack\psi_{L}=c]$. If $c(z)>0$
then $\frac{1}{\sqrt{c(z)}}z\in U_{[\psi_{L}=c]}\subset L$ and so
$z\in L$. Since $[\psi_{L}=c]$ is linear (by Proposition \ref{lin psi}
(i)) with nonempty positive part, if $c(z)=0$, by Theorem \ref{lin FP}
(ii), $z\in\operatorname*{Skew}([\psi_{L}=c])\subset\operatorname*{conv}(\operatorname*{Unit}([\psi_{L}=c]))\subset L$.
Hence $L=[\psi_{L}=c]$.

(i) $\Rightarrow$ (iii) Assume that $L$ is representable. To prove
(iii) it is sufficient to consider the net $(z_{i})_{i\in I}\subset L$
with $z_{i}\to^{\nu}z$, that is, $z_{i}\rightarrow z$ for the topology
$w\times w^{\ast}$ and $c(z_{i})\leq c(z)$ for every $i\in I$.
Note that\[
c(z)\leq\psi_{L}(z)\leq\liminf_{i\in I}\psi_{L}(z_{i})=\liminf_{i\in I}c(z_{i})\leq c(z),\]
 which yields that $z\in\lbrack\psi_{L}=c]=L$ and so $c_{L}$ is
$\nu$--lsc.

The implication (iii) $\Rightarrow$ (iv) is plain.

(iv) $\Rightarrow$ (ii) Let $z\in\operatorname*{Unit}([\psi_{L}=c])$,
that is, $\psi_{L}(z)=c(z)=1$. Using Proposition \ref{lin psi} (ii)
and Theorem \ref{lin FP} (iii) we get $z\in C=\operatorname*{cl}_{w\times w^{\ast}}[c_{L}\leq1]$,
and so there exists $(z_{i})_{i\in I}\subset\lbrack c_{L}\leq1]$
$(\subset L)$ with $z_{i}\rightarrow z$ for the topology $w\times w^{\ast}$
in $Z$. Since $c(z)=1$ and $L$ is $\nu$--closed we get $z\in L$.
\end{proof}

\begin{remark} Note that whenever $S$ is a skew linear subspace
of $Z$, $\operatorname*{cl}_{\nu}S=\operatorname*{cl}_{w\times w^{*}}S$
since $\operatorname*{cl}_{\nu}S\subset\operatorname*{cl}_{w\times w^{*}}S$
and $\operatorname*{cl}_{w\times w^{*}}S$ is skew. In this case $S$
is representable iff $S$ is $w\times w^{*}-$closed, that is, we
recover part of Corollary \ref{skewmondc}. \end{remark}

\begin{corollary} If $L\subset Z$ is a linear monotone and $w\times w^{\ast}$--closed
subspace then $L$ is representable. \end{corollary}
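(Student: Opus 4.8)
The plan is to reduce the statement to Theorem~\ref{lin repres} via the $\nu$--closure, treating separately the degenerate case in which $L$ has empty positive part. Since $L$ is monotone it is non-negative, so $L=S_{L}\cup P_{L}$, and the argument splits according to whether $P_{L}$ is empty.

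First I would dispose of the case $P_{L}=\emptyset$. Then $L=S_{L}$ is a skew linear subspace, and Theorem~\ref{lin repres} does not apply because it carries the standing assumption of a nonempty positive part. Instead I would invoke Corollary~\ref{skewmondc}: a skew monotone double-cone $D$ is representable iff $D=\operatorname*{cl}_{w\times w^{\ast}}(\operatorname*{lin}D)$, i.e.\ iff $D$ is linear and $w\times w^{\ast}$--closed. Since by hypothesis $L$ is both linear and $w\times w^{\ast}$--closed, this immediately yields that $L$ is representable.

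In the main case $P_{L}\neq\emptyset$ I would apply Theorem~\ref{lin repres}, for which it suffices to verify condition~(iv), namely that $L$ is $\nu$--closed. Let $z\in\operatorname*{cl}_{\nu}L$; then there is a net $(z_{i})_{i}\subset L$ with $z_{i}\rightarrow^{\nu}z$. By the very definition of $\nu$--convergence this entails, in particular, that $z_{i}\rightarrow z$ for the topology $w\times w^{\ast}$. Because $L$ is $w\times w^{\ast}$--closed, $z\in L$, whence $\operatorname*{cl}_{\nu}L\subset L$ and $L$ is $\nu$--closed. Theorem~\ref{lin repres} then gives that $L$ is representable, completing the proof.

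I expect no serious obstacle here; the only point requiring care is that Theorem~\ref{lin repres} presupposes a nonempty positive part, so the purely skew case must be handled by the separate machinery of Section~\ref{sec:Skew} (Corollary~\ref{skewmondc}, or equivalently the Remark preceding this corollary) rather than by the theorem itself.
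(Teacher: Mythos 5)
Your proposal is correct and follows essentially the same route the paper intends: the corollary is stated without proof precisely because $w\times w^{\ast}$--closedness trivially implies $\nu$--closedness, so Theorem~\ref{lin repres} (iv)~$\Rightarrow$~(i) applies, while the degenerate skew case ($P_{L}=\emptyset$) is covered by Corollary~\ref{skewmondc}, exactly as the remark preceding the corollary indicates. Your explicit attention to the nonempty-positive-part hypothesis of Theorem~\ref{lin repres} is a careful and faithful reconstruction of the paper's implicit argument.
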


\begin{corollary} If $L\subset Z$ is a linear representable subspace
and $(z_{i})_{i\in I}\subset L$ is such that $z_{i}\rightarrow^{\nu}z$,
then $c(z)=\lim_{i}c(z_{i})$, and $f(z)=\lim_{i}f(z_{i})$ for every
$f\in\mathcal{R}_{L}$. \end{corollary}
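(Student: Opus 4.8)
The plan is to reduce the statement to the very inequality chain already used in the $\nu$--closedness observation preceding Theorem \ref{lin repres}, but now keeping careful track of the scalars $c(z_i)$ and of an arbitrary representative $f$. Since $L$ is representable, Theorem \ref{caracter-max}(i) gives $L\in\mathcal{M}(X)$, $L=[\psi_L=c]$, and $\psi_L\in\mathcal{R}_L$; in particular, by (\ref{r2}), $\psi_L\ge c$. More generally, every $f\in\mathcal{R}_L$ satisfies, by the definition of $\mathcal{R}_L$, that $f\in\mathcal{R}=\Gamma_{\tau\times w^{\ast}}(Z)\cap\mathcal{F}(Z)$ and $L=[f=c]$; thus $f\ge c$ and $f(w)=c(w)$ for every $w\in L$. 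I would treat $\psi_L$ as merely one instance of such an $f$, so that both asserted limits emerge from a single squeeze.

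First I would fix $f\in\mathcal{R}_L$ and record that $f$, being convex and $\tau\times w^{\ast}$--lsc, is in fact $w\times w^{\ast}$--lsc. This is the one point that genuinely needs an argument: lower semicontinuity of a \emph{convex} function is insensitive to the choice of topology compatible with the duality $(Z,Z)$, and both $\tau\times w^{\ast}$ and $w\times w^{\ast}$ are such topologies, since $(Z,\tau\times w^{\ast})^{\ast}=(Z,w\times w^{\ast})^{\ast}=Z$. Equivalently, the convex set $\operatorname*{epi}f$ is $\tau\times w^{\ast}$--closed iff it is $w\times w^{\ast}$--closed. This is precisely the step already used implicitly in the text, so it is consistent with the paper's conventions.

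Next I would unwind $z_i\rightarrow^{\nu}z$ into its two components: $z_i\rightarrow z$ for $w\times w^{\ast}$, and $c(z_i)\le c(z)$ for every $i$. Since $z_i\in L=[f=c]$, one has $f(z_i)=c(z_i)$, so $w\times w^{\ast}$--lower semicontinuity yields
\[
c(z)\le f(z)\le\liminf_{i}f(z_i)=\liminf_{i}c(z_i)\le\limsup_{i}c(z_i)\le c(z),
\]
where the first inequality is $f\ge c$ and the last is $c(z_i)\le c(z)$. The squeeze forces every term to equal $c(z)$; hence $\lim_i c(z_i)=c(z)$, and since $f(z)=c(z)=\lim_i c(z_i)=\lim_i f(z_i)$, also $f(z)=\lim_i f(z_i)$. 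As $f\in\mathcal{R}_L$ was arbitrary, both conclusions follow.

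The only real obstacle is the topological upgrade in the second step; the rest is a direct squeeze requiring no computation. As a byproduct the chain gives $\psi_L(z)=c(z)$, i.e.\ $z\in[\psi_L=c]=L$, which simultaneously recovers the $\nu$--closedness of $L$ furnished by Theorem \ref{lin repres}.
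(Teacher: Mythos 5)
Your proposal is correct and is essentially the paper's own argument: the corollary follows from the squeeze chain $c(z)\le f(z)\le\liminf_i f(z_i)=\liminf_i c(z_i)\le\limsup_i c(z_i)\le c(z)$ already displayed in the observation preceding Theorem \ref{lin repres}, applied to an arbitrary $f\in\mathcal{R}_L$ with $L=[f=c]$. Your only addition is to make explicit the upgrade from $\tau\times w^{\ast}$--lower semicontinuity to $w\times w^{\ast}$--lower semicontinuity of the convex function $f$ (valid since both topologies are compatible with the duality $(Z,Z)$), a step the paper uses implicitly and endorses in its closing remark of Section 4.
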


In the next result we characterize the uniqueness of monotone double-cones.

\begin{proposition} \label{dc unic} Let $D\subset Z$ be a monotone
double-cone. Then $D$ is unique iff $[\varphi_{D}=c]$ is monotone.
\end{proposition}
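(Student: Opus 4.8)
The plan is to reduce the statement to the general uniqueness criterion already proved and then to settle the one remaining implication by a one-variable convexity argument along a line.

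First I would invoke Proposition \ref{caracter-unic}: for $D\in\mathcal{M}(X)$ one has $D$ unique iff $D^{+}=[\varphi_{D}\leq c]$ is monotone. Thus the proposition reduces to the equivalence ``$[\varphi_{D}\leq c]$ is monotone $\Leftrightarrow$ $[\varphi_{D}=c]$ is monotone''. Since $[\varphi_{D}=c]\subset[\varphi_{D}\leq c]$ and subsets of monotone sets are monotone, one implication is immediate; the entire content lies in the converse, so I would assume $[\varphi_{D}=c]$ monotone and prove $[\varphi_{D}\leq c]$ monotone by contradiction.

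Suppose there are $z_{1},z_{2}\in[\varphi_{D}\leq c]$ with $c(z_{1}-z_{2})<0$ (in particular $z_{1}\neq z_{2}$, so the line through them is genuine). Two elementary facts about a monotone, hence non-negative, double-cone drive the argument: (a) $0\in D$ gives $\varphi_{D}(z)=\sup_{w\in D}(z\cdot w-c(w))\geq -c(0)=0$, so $\varphi_{D}\geq 0$; and (b) $\varphi_{D}$ is convex, even and positively $2$-homogeneous (Proposition \ref{dc FP}), so its effective domain $\operatorname*{dom}\varphi_{D}$ is a linear subspace. Because $z_{1},z_{2}\in\operatorname*{dom}\varphi_{D}$, the whole line $z_{t}:=(1-t)z_{1}+tz_{2}$, $t\in\mathbb{R}$, remains in $\operatorname*{dom}\varphi_{D}$, so $g(t):=\varphi_{D}(z_{t})-c(z_{t})$ is finite for every $t$. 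Moreover $g$ is convex: $\varphi_{D}(z_{t})$ is the restriction of a convex function to a line, while $c(z_{t})=c(z_{1})+t\,z_{1}\cdot(z_{2}-z_{1})+t^{2}c(z_{2}-z_{1})$ is a quadratic whose leading coefficient $c(z_{2}-z_{1})<0$, so $-c(z_{t})$ is convex. Being finite and convex on $\mathbb{R}$, $g$ is continuous, and by (a) it satisfies $g(t)\geq -c(z_{t})\to+\infty$ as $|t|\to\infty$.

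Since $g(0),g(1)\leq 0$, the sublevel set $\{t:g(t)\leq 0\}$ is a bounded closed interval $[a,b]\supseteq[0,1]$ (so $b-a\geq 1$), and continuity forces $g(a)=g(b)=0$; that is, $z_{a},z_{b}\in[\varphi_{D}=c]$. As $z_{a}-z_{b}=(a-b)(z_{2}-z_{1})$, bilinearity yields $c(z_{a}-z_{b})=(a-b)^{2}c(z_{2}-z_{1})<0$, contradicting the assumed monotonicity of $[\varphi_{D}=c]$. This completes the converse and hence the proof. The one delicate point—and the main obstacle—is guaranteeing that $g$ is a genuine real-valued convex function that tends to $+\infty$ in both directions, so that its zero-sublevel interval really has two finite endpoints lying on $[\varphi_{D}=c]$; this is exactly where the double-cone hypothesis enters, through the linearity of $\operatorname*{dom}\varphi_{D}$ (which keeps the whole line in the domain) and through $\varphi_{D}\geq 0$ (which supplies coercivity). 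For a merely convex monotone set neither property is available; note also that the skew case (empty positive part) is covered by the same two facts, since then $\varphi_{D}=\iota_{S^{\perp}}\geq 0$ still has linear domain, and is in any event subsumed by Proposition \ref{skew unic}.
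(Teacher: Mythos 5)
Your proof is correct and takes essentially the same route as the paper's: both reduce via Proposition \ref{caracter-unic} to showing that non-monotonicity of $[\varphi_{D}\leq c]$ forces non-monotonicity of $[\varphi_{D}=c]$, and both then run the same line argument, using the linearity of $\operatorname*{dom}\varphi_{D}$ to keep the whole line through $z_{1},z_{2}$ in the domain and the coercivity of $\varphi_{D}-c$ along that line to produce two points of $[\varphi_{D}=c]$ that are not monotonically related. Your two refinements---deriving coercivity from $\varphi_{D}\geq0$ rather than from an affine minorant of the convex restriction, and explicitly noting that the skew case (empty positive part, not covered by Proposition \ref{dc FP}) still satisfies the two needed facts via $\varphi_{D}=\iota_{D^{\perp}}$---are minor improvements in care, not a different method.
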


\begin{proof} According to Proposition \ref{caracter-unic} it is
clear that $D$ unique implies that $D_{1}:=[\varphi_{D}=c]$ is monotone.
For the converse it suffices to prove that $D$ non-unique implies
that $D_{1}$ is non-monotone. Since $D$ is not unique we can find
$z_{1},z_{2}\in[\varphi_{D}\le c]$ such that $c(z_{1}-z_{2})<0$.
Let $d:=\{tz_{1}+(1-t)z_{2}\mid t\in\mathbb{R}\}$ be the line through
$z_{1},z_{2}$; because $z_{1},z_{2}\in\operatorname*{dom}\varphi_{D}$
and $\operatorname*{dom}\varphi_{D}$ is a linear space (see Proposition
\ref{dc FP}), we have that $d\subset\operatorname*{dom}\varphi_{D}$.
Because $c(z_{1}-z_{2})<0$ and $\varphi_{D}$ is convex, the function
$\mathbb{R}\ni t\mapsto\eta(t):=(\varphi_{D}-c)(tz_{1}+(1-t)z_{2})$
is finite-valued, continuous and coercive, i.e., $\lim_{|t|\rightarrow+\infty}\eta(t)=+\infty$.
Since $\eta(0)\le0$ and $\eta(1)\le0$, there exist $t_{0}\le0$
and $t_{1}\ge1$ such that $\eta(t_{0})=\eta(t_{1})=0$. Then $z_{1}^{\prime}:=t_{0}z_{1}+(1-t_{0})z_{2}\in D_{1}$
and $z_{2}^{\prime}:=t_{1}z_{1}+(1-t_{1})z_{2}\in D_{1}$. Because
$c(z_{1}^{\prime}-z_{2}^{\prime})=(t_{1}-t_{0})^{2}c(z_{2}-z_{1})<0$,
we have that $D_{1}$ is not monotone. \end{proof}

\begin{corollary} \label{lin unic} Let $L\subset Z$ be linear monotone
with non-empty crown $C$. TFAE: \emph{(a)} $L$ is unique, \emph{(b)}
$[\varphi_{L}=c]$ is monotone; $\emph{(c)}$ $\zeta^{-1}(2C^{\circ})\cup\operatorname*{Skew}(L^{\perp})$
is monotone; \emph{(d)} $\zeta^{-1}([\sigma_{C}=2])\cup\operatorname*{Skew}(L^{\perp})$
is monotone. \end{corollary}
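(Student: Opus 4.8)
The plan is to read off all four conditions from the explicit form of the Fitzpatrick function of $L$ together with the uniqueness criteria already established, so that no genuinely new argument is needed. First, since $L$ is a linear subspace it is in particular a monotone double-cone ($\mathbb{R}L=L$), so Proposition~\ref{dc unic} applies verbatim and yields (a)~$\Leftrightarrow$~(b) immediately. It then remains to recognize the sets appearing in (c) and (d) as $[\varphi_L\le c]=L^+$ and $[\varphi_L=c]$, respectively.

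For (a)~$\Leftrightarrow$~(c) I would use $\varphi_L=\tfrac14\sigma_C^2$ from Theorem~\ref{lin FP}~(iv) to compute $L^+=[\varphi_L\le c]=[\sigma_C^2\le4c]$ by splitting on the sign of $c(z)$. The case $c(z)<0$ is empty (as $\sigma_C^2\ge0$); the case $c(z)=0$ forces $\sigma_C(z)=0$, hence $z\in[\sigma_C=0]\cap[c=0]=\operatorname*{Skew}(C^\perp)=\operatorname*{Skew}(L^\perp)$ by Proposition~\ref{dc FP}~(vi) together with $C^\perp=L^\perp$; and the case $c(z)>0$ reduces, after dividing by $\sqrt{c(z)}$ and using the homogeneity identity $2C^\circ=[\sigma_C\le2]$, exactly to $\zeta(z)\in2C^\circ$, i.e. $z\in\zeta^{-1}(2C^\circ)$. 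This gives $L^+=\zeta^{-1}(2C^\circ)\cup\operatorname*{Skew}(L^\perp)$, whereupon Proposition~\ref{caracter-unic} (namely (i)~$\Leftrightarrow$~(ii): $L$ is unique iff $L^+$ is monotone) delivers (a)~$\Leftrightarrow$~(c).

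For (b)~$\Leftrightarrow$~(d) essentially no computation is needed: Proposition~\ref{lin psi}~(i) already records $[\varphi_L=c]=\operatorname*{Skew}(L^\perp)\cup\zeta^{-1}([\sigma_C=2])$, so (b) and (d) assert the monotonicity of literally the same set. The three resulting equivalences together link all of (a)--(d), completing the proof.

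Since these arguments are essentially unwindings of Theorem~\ref{lin FP} and Propositions~\ref{dc FP} and \ref{lin psi}, I expect no real obstacle; the only points demanding care are the positive-homogeneity identity $2C^\circ=[\sigma_C\le2]$ and the verification that the constraint $z\in S^\perp$ attached to $D^+$ in Proposition~\ref{dc FP}~(vii) is here automatic. Indeed $\operatorname*{bar}C\subset S^\perp$ (Theorem~\ref{lin FP}~(ii),(iv)) forces $\zeta^{-1}(2C^\circ)\subset S^\perp$, so that $L^+$ collapses to the stated union without a superfluous intersection.
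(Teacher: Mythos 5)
Your proof is correct and matches the paper's (implicit) derivation: the paper states this corollary without proof precisely because it is the unwinding you give, namely (a)$\Leftrightarrow$(b) from Proposition~\ref{dc unic}, (b)$\Leftrightarrow$(d) from the formula $[\varphi_{L}=c]=\operatorname*{Skew}(L^{\perp})\cup\zeta^{-1}([\sigma_{C}=2])$ in Proposition~\ref{lin psi}~(i), and (a)$\Leftrightarrow$(c) by identifying $L^{+}=[\varphi_{L}\leq c]$ with $\zeta^{-1}(2C^{\circ})\cup\operatorname*{Skew}(L^{\perp})$ via $\varphi_{L}=\tfrac{1}{4}\sigma_{C}^{2}$ and Proposition~\ref{caracter-unic}. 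Your care about $C^{\perp}=L^{\perp}$, the identity $2C^{\circ}=[\sigma_{C}\leq2]$, and the redundancy of the $S^{\perp}$ constraint is exactly what is needed to make the reduction rigorous.
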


From Propositions \ref{NI} and \ref{dc unic} we get immediately
the next result.

\begin{corollary} \label{dc NI} Let $D\subset Z$ be a monotone
double-cone. Then $D$ is $NI$ iff either $[\varphi_{D}=c]$ is monotone
and $\operatorname*{dom}\varphi_{D}$ is not monotone or $\operatorname*{dom}\varphi_{D}=[\varphi_{D}=c]$.
\end{corollary}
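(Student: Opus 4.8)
The plan is to invoke Proposition \ref{NI} directly and then reconcile the first disjunct with the stated one using the double-cone structure. First I would apply Proposition \ref{NI} with $T:=D$. Since $D$ is monotone, this yields that $D$ is NI iff either $[\varphi_{D}\le c]$ is monotone and $\operatorname*{dom}\varphi_{D}$ is non-monotone, or $[\varphi_{D}=c]=\operatorname*{dom}\varphi_{D}$. The second disjunct already matches the statement verbatim, so the only discrepancy to resolve is that Proposition \ref{NI} carries the monotonicity condition on $[\varphi_{D}\le c]$, whereas the corollary states it on $[\varphi_{D}=c]$.

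The key step is to observe that, for a monotone double-cone, these two monotonicity conditions coincide, and this is exactly where Proposition \ref{dc unic} enters. On one hand, by Proposition \ref{caracter-unic} (the equivalence (i)~$\Leftrightarrow$~(ii)), $D$ is unique iff $D^{+}=[\varphi_{D}\le c]$ is monotone. On the other hand, by Proposition \ref{dc unic}, $D$ is unique iff $[\varphi_{D}=c]$ is monotone. Chaining these two equivalences through the common notion of uniqueness of $D$ gives that $[\varphi_{D}\le c]$ is monotone iff $[\varphi_{D}=c]$ is monotone.

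Substituting this equivalence into the first disjunct transforms the characterization coming from Proposition \ref{NI} into precisely the stated one, which completes the argument. I do not expect a genuine obstacle: the entire content is the interchange of the two monotonicity conditions, which is exactly the extra information that the double-cone-specific Proposition \ref{dc unic} supplies beyond the general Proposition \ref{NI}. The only point meriting a line of care is that the substitution be legitimate inside the disjunction, but since the two conditions are logically equivalent (each being equivalent to uniqueness of $D$), the replacement is valid independently of whether the other disjunct holds, so the two ``iff'' statements have identical truth value.
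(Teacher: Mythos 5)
Your proposal is correct and is exactly the paper's intended argument: the paper gives no written proof, stating only that the corollary follows ``immediately'' from Propositions \ref{NI} and \ref{dc unic}, and your chain (Proposition \ref{caracter-unic} gives $D$ unique $\Leftrightarrow$ $[\varphi_{D}\leq c]$ monotone, Proposition \ref{dc unic} gives $D$ unique $\Leftrightarrow$ $[\varphi_{D}=c]$ monotone, hence the two conditions are interchangeable in the first disjunct of Proposition \ref{NI}) is precisely the reconciliation the authors had in mind. Your remark that the substitution is legitimate because the equivalence holds unconditionally under the standing hypothesis is the right point of care, and nothing further is needed.
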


This result together with Proposition \ref{lin psi} (i) and Remark
\ref{rem5} yield the next result. \begin{corollary} \label{lin NI}
Let $L\subset Z$ be linear monotone with non-empty crown $C$ and
set $M:=[\varphi_{L}=c]=\zeta^{-1}([\sigma_{C}=2])\cup\operatorname*{Skew}(L^{\perp}).$
Then $L$ is NI iff either $M$ is monotone and $\operatorname*{bar}C$
is not monotone or $\varphi_{L}=c_{M}$. \end{corollary}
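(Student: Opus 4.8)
The plan is to read the statement off directly from Corollary~\ref{dc NI} applied to the double-cone $D=L$, and then translate each ingredient into the language of the crown $C$. First I would note that a linear monotone subspace is automatically a non-negative double-cone (indeed $\mathbb{R}L=L$, and $L$ being monotone forces $c\ge 0$ on $L$), so $L$ is a monotone double-cone and Corollary~\ref{dc NI} applies verbatim. The non-empty crown hypothesis guarantees a non-empty positive part, so Theorem~\ref{lin FP} and Proposition~\ref{lin psi} are available. Corollary~\ref{dc NI} then gives at once that $L$ is NI iff either $[\varphi_{L}=c]$ is monotone and $\operatorname*{dom}\varphi_{L}$ is non-monotone, or $\operatorname*{dom}\varphi_{L}=[\varphi_{L}=c]$.

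The next step is a dictionary translation. By Theorem~\ref{lin FP}(iv) one has $\varphi_{L}=\tfrac14\sigma_{C}^{2}$ and $\operatorname*{dom}\varphi_{L}=\operatorname*{bar}C$, so the phrase ``$\operatorname*{dom}\varphi_{L}$ is (not) monotone'' becomes ``$\operatorname*{bar}C$ is (not) monotone''. By Proposition~\ref{lin psi}(i), $[\varphi_{L}=c]=\operatorname*{Skew}(L^{\perp})\cup\zeta^{-1}([\sigma_{C}=2])$, which is precisely the set $M$ of the statement; hence ``$[\varphi_{L}=c]$ is monotone'' becomes ``$M$ is monotone''. With these substitutions the first alternative of Corollary~\ref{dc NI} reads exactly ``$M$ is monotone and $\operatorname*{bar}C$ is not monotone''.

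It remains to rewrite the second alternative $\operatorname*{dom}\varphi_{L}=[\varphi_{L}=c]$, and this is where Remark~\ref{rem5} enters. Applying it with $T=L$, the equality $[\varphi_{L}=c]=\operatorname*{dom}\varphi_{L}$ is equivalent to $\varphi_{L}=c_{M'}$ for some affine maximal monotone $M'$, and in that case necessarily $M'=[\varphi_{L}=c]=M$; conversely $\varphi_{L}=c_{M}$ forces $\operatorname*{dom}\varphi_{L}=\operatorname*{dom}c_{M}=M=[\varphi_{L}=c]$. Thus the second alternative is exactly $\varphi_{L}=c_{M}$, and combining the two translated alternatives yields the corollary.

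I do not expect a genuine obstacle here, since the result is a specialization of Corollary~\ref{dc NI}. The one point requiring care is the equivalence $\operatorname*{dom}\varphi_{L}=[\varphi_{L}=c]\Leftrightarrow\varphi_{L}=c_{M}$ for the fixed $M=[\varphi_{L}=c]$, which is almost tautological (as $c_{M}=c+\iota_{M}$ agrees with $\varphi_{L}$ on $M$ and equals $+\infty$ precisely off $\operatorname*{dom}\varphi_{L}$); invoking Remark~\ref{rem5} simultaneously records that the resulting $M$ is then affine and maximal monotone, which is the informative content of that case.
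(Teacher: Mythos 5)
Your proposal is correct and follows exactly the paper's route: the paper derives Corollary~\ref{lin NI} by combining Corollary~\ref{dc NI} (applied to the monotone double-cone $L$) with the identifications $\operatorname*{dom}\varphi_{L}=\operatorname*{bar}C$ from Theorem~\ref{lin FP}, $[\varphi_{L}=c]=\zeta^{-1}([\sigma_{C}=2])\cup\operatorname*{Skew}(L^{\perp})$ from Proposition~\ref{lin psi}(i), and the equivalence $[\varphi_{L}=c]=\operatorname*{dom}\varphi_{L}\Leftrightarrow\varphi_{L}=c_{M}$ from Remark~\ref{rem5}, which is precisely your dictionary translation. Your added verification that a non-empty crown gives a non-empty positive part, and the explicit check of the near-tautological second alternative, simply fill in details the paper leaves implicit.
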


\begin{remark} When $S$ is a skew linear subspace of $Z$ Corollary
\ref{dc NI} spells $S$ is NI iff either $S_{0}:=S^{\perp}\cap[c=0]$
is monotone and $S^{\perp}$ is not monotone or $S_{0}=S^{\perp}$
(that is $S^{\perp}$ is skew). According to Corollary \ref{skew skew}
and Proposition \ref{skew unic}, this comes to $S$ is NI iff $-S^{\perp}$
is monotone, and we recover part of Proposition \ref{skew NI}. \end{remark}

{The} following chart comprises the information on a linear non-skew
monotone subspace $L$ with crown $C$.

\begin{center}
\begin{tabular}{l}
1. $L$ is representable iff $L$ is $\nu-$closed\tabularnewline 2.
$L$ is unique and not NI iff $\operatorname*{bar}C$ is monotone and
$\zeta^{-1}([\sigma_{C}=2])\cup\operatorname*{Skew}(L^{\perp})\subsetneq\operatorname*{bar}C$\tabularnewline
3. $L$ is unique iff
$\zeta^{-1}([\sigma_{C}=2])\cup\operatorname*{Skew}(L^{\perp})$ is
monotone\tabularnewline 4. $L$ is NI iff either
$\zeta^{-1}([\sigma_{C}=2])\cup\operatorname*{Skew}(L^{\perp})$ is
monotone and $\operatorname*{bar}C$ is not monotone\tabularnewline
\hspace*{4mm}  or $\varphi_{L}=c_{[\varphi_{L}=c]}$\tabularnewline
5. $L$ is maximal monotone iff $L$ is $\nu-$closed and
NI\tabularnewline
\end{tabular}
\par\end{center}

\eject

\end{document}